\newtheorem{theorem}{Theorem}
\newtheorem{corollary}[theorem]{Corollary}
\newtheorem{lemma}[theorem]{Lemma}
\newtheorem{proposition}[theorem]{Proposition}
\newtheorem{assumption}{Assumption}
\theoremstyle{definition}
\newtheorem{definition}{Definition}
\newtheorem{example}{Example}
\theoremstyle{remark}
\newtheorem{remark}{Remark}
\theoremstyle{remark}
\newcommand{\R}{\mathbb{R}}
\newcommand{\N}{\mathbb{N}}
\newcommand{\Ep}{\mathbb{E}}
\renewcommand{\hat}{\widehat}
\renewcommand{\tilde}{\widetilde}
\newcommand{\argmin}{\operatornamewithlimits{argmin}}
\newcommand{\mone}{\textbf{1}}
\newcommand{\C}{\mathbb{C}}
\newcommand{\trace}{\mathrm{tr}}
\newcommand{\vectorize}{\mathrm{vec}}
\newcommand{\diag}{\mathrm{diag}}
\title{Benign Overfitting in Time Series Linear Model with Over-Parameterization}
\author{Shogo Nakakita$^\dagger$ \and Masaaki Imaizumi$^{\dagger \ddagger}$}
\address{$^\dagger$The University of Tokyo, $^\ddagger$RIKEN Center for AIP}
\begin{document}

\maketitle

\footnote{
E-mail: imaizumi@g.ecc.u-tokyo.ac.jp.

$^\dagger$ The University of Tokyo: 3-8-1 Komaba, Meguro, Tokyo, 153-0041 JAPAN.

$\ddagger$ RIKEN Center for AIP: 1-4-1 Nihonbashi, Chuo, Tokyo, 103-0027 JAPAN.
}

\begin{abstract}
The success of large-scale models in recent years has increased the importance of statistical models with numerous parameters. Several studies have analyzed over-parameterized linear models with high-dimensional data, which may not be sparse; however, existing results rely on the assumption of sample independence. In this study, we analyze a linear regression model with dependent time-series data in an over-parameterized setting. We consider an estimator using interpolation and develop a theory for the excess risk of the estimator. Then, we derive non-asymptotic risk bounds for the estimator for cases with dependent data. This analysis reveals that the coherence of the temporal covariance plays a key role; the risk bound is influenced by the product of temporal covariance matrices at different time steps. Moreover, we show the convergence rate of the risk bound and demonstrate that it is also influenced by the coherence of the temporal covariance. Finally, we provide several examples of specific dependent processes applicable to our setting.
\end{abstract}

\section{Introduction}

In this study, we analyze the high-dimensional time-series linear regression problem with dependent covariates.
Suppose there are $n$ covariates, $x_1,...,x_n \in \R^p$ with dimension $p \in \N$, and $n$ responses, $y_1,...,y_n \in \R$, observed from the following linear model with an unknown true parameter $\beta^* \in \R^p$:
\begin{align}
    y_t = \langle \beta^*, x_t \rangle + \varepsilon_t, ~ t=1,...,n, \label{def:model}
\end{align}
where $\langle \cdot, \cdot \rangle$ denotes the inner product in $\R^p$.
Here, the covariates $\{x_t: t = 1,...,n\}$ and the independent noise $\{\varepsilon_t: t = 1,...,n\}$ are stationary, centered Gaussian processes that are independent of each other.
It should be noted that we allow for dependence within $\{x_t: t = 1,...,n\}$ and $\{\varepsilon_t: t = 1,...,n\}$.
We focus on the over-parameterized case, where the dimension $p$ is significantly larger than $n$ (and possibly infinite).
In this study, we derive sufficient conditions under which the risk of an estimator for the model converges to zero as $n \to \infty$, indicating benign overfitting, under several settings of dependence within $\{x_t: t = 1,...,n\}$.

Statistical methods for high-dimensional and large-scale data analysis have garnered significant attention over the past decades. A prominent approach leverages sparsity through $\ell_1$ norm regularization and its variants \citep{candes2007dantzig,van2008high,buhlmann2011statistics,hastie2019statistical}, which is effective when the target signal contains many zero elements.
Another line of research has explored the asymptotic risk behavior of estimators in high-dimensional settings \citep{dobriban2018high,belkin2019reconciling,hastie2019surprises,bartlett2020benign,tsigler2023benign,nakakita2024dimension} and has revealed the limiting risk behavior when the number of data points $n$ and parameters $p$ tend to infinity while their ratio $p/n$ converges to a positive constant.
Interpolators, which are estimators that perfectly fit the observed data in the $p \gg n$ regime, have been shown to have risk or variance that converges to zero \citep{liang2020just,hastie2019surprises,ba2019generalization,tsuda2024benign}.
Notably, these studies did not exploit the sparsity of data or signals; however, they remain compatible with recent large-scale data analysis.

In the context of high-dimensional time-series data, several challenges remain unresolved.
The first challenge is that no consistent estimator has been established unless sparsity is imposed.
Although several consistent estimators with sparsity exist \citep{wang2007regression,Basu2015Regularized,han2015direct,Wong2020Lasso}, demonstrating the consistency of estimators without sparsity is a non-trivial task, as discussed in \citet{daskalakis2019regression,kandiros2021statistical}.
This difficulty arises because many probabilistic tools used in high-dimensional statistics rely on the assumption of variable independence.
The second challenge is identifying a statistic that influences the estimation risk in this setting.
In the case of independent data, \cite{bartlett2020benign} and \cite{tsigler2023benign} showed that the effective rank explains the risk.
For low-dimensional or sparse high-dimensional time-series data, it has been shown that temporal covariance matrices characterize the risk of estimators \citep{Basu2015Regularized,Wong2020Lasso}.
However, in high-dimensional time-series settings without sparsity, these statistics are not valid measures.

In this study, we investigate the risk of an interpolation estimator in stochastic linear regression \eqref{def:model} with high-dimensional dependent processes to tackle these challenges.
Specifically, we introduce an assumption on the coherence of temporal covariance matrices and derive non-asymptotic bounds for the prediction risk of the interpolation estimator, motivated by recent advances in neural networks that achieve perfect data fitting.
We further demonstrate the convergence rate of the derived bound as $n \to \infty$ under tail assumptions on the eigenvalues of the covariance matrix of the data.
These results hold when the data dimension is sufficiently large, under a condition on the covariance matrix.
This setup includes the following cases: a stationary Gaussian process $\{x_t\}_{t=1}^n$, which can be (i) a separable vector autoregressive moving-average (ARMA) process; (ii) a non-separable vector ARMA process; or (iii) a time-varying regression model under temporal dependence.

Our results address the challenges of high-dimensional time-series analysis, as summarized below:
(i) We develop a consistency theory for the estimator in a high-dimensional time-series regression model \eqref{def:model} without assuming sparsity of the target parameter.
Specifically, we clarify that the interpolation estimator is consistent under a spectral condition on the covariance matrix in the time direction, in addition to the spectral condition on the covariance matrix in the spatial direction.
(ii) We identify that \textit{coherent temporal covariance}, defined as the product of temporal covariance matrices with a reference matrix, characterizes the risk in high-dimensional time-series estimation.
Specifically, we denote the covariance matrix of the $k$-th component of the covariate as $\Xi_{k,n}$ and an arbitrary reference matrix as $\Xi_{0,n}$, and we show that ensuring the eigenvalues of $\Xi_{k,n}^{-1}\Xi_{0,n}$ remain finite and bounded away from zero is crucial for risk control.
Unlike the spatial-directional condition identified by \cite{bartlett2020benign}, the temporal-directional condition we derive is expressed in terms of the eigenvalues of a matrix product.
Furthermore, this property has not been addressed in studies on high-dimensional time-series models with sparsity, e.g., \citet{Basu2015Regularized}, and it represents a unique characteristic of our over-parameterized setup.

On the technical side, our core results rely on two key contributions.
First, we observe that the interpolation estimator in linear regression can implicitly decorrelate observations.
This result alone provides insight into benign overfitting when the covariate process has a separable structure.
Second, we analyze the spectral structure of temporally dependent random matrices.
This represents a sophisticated extension of the analyses by \citet{bartlett2020benign} and \citet{tsigler2023benign} to time series models.
Specifically, we control the deviation of the spectral structure from the i.i.d.~case and show that it asymptotically coincides up to constant factors.

\subsection{Related studies} 
In this paper, we discuss the following two types of related studies.

\textit{High-dimensional time-series}:
Time-series analysis of high-dimensional data has been conducted for stochastic regression and vector auto-regression (VAR) estimation problems using various sparsity-induced regularizations.
These studies applied regularization under the assumption of sparsity.
\citet{wang2007regression} proposed a flexible parameter tuning method for the lasso that can be applied to VAR.
\citet{alquier2011sparsity} studied a general scheme of $\ell_1$-norm regularization for dependent noise with a wide class of loss functions.
\citet{song2011large} focused on large-scale VAR and its sparse estimation.
\citet{Basu2015Regularized} developed a spectrum-based characterization of dependent data and employed a restricted eigenvalue condition for $\ell_1$-norm regularization of dependent data.
\citet{kock2015oracle} studied the lasso for time-series data and established an oracle inequality and model selection consistency.
\citet{han2015direct} employed the Danzig selector for high-dimensional VAR and analyzed its efficiency based on the spectral norm of a transition matrix.
\citet{wu2016performance} developed a widely applicable risk analysis method for time-series lasso under various tail probability distributions.
\citet{guo2016high} focused on applying the lasso to VAR with banded transition matrices.
\citet{davis2016sparse} proposed a two-step estimation algorithm for VAR with sparse coefficients.
\citet{medeiros2016L1} studied time-series lasso with non-Gaussian and heteroscedastic covariates.
\citet{masini2019regularized} derived an oracle inequality for sparse VAR with fat probability tails and various dependent settings.
\citet{Wong2020Lasso} studied time-series lasso with general tail probability and dependence by relaxing a restriction in \citet{Basu2015Regularized}.
These studies imposed sparsity on the data; thus, they are applicable to non-sparse high-dimensional data in our setting.
\citet{bunea2022interpolating} studied a factor model with over-parameterization within the framework of benign overfitting.

\textit{Large-scale models with dependent data}:
In contrast to studies focusing on sparsity, few studies have analyzed dependent data in the context of large-scale models. Learning theory has advanced predictive risk analysis of dependent data through uniform convergence and complexity measures. Unlike sparsity-based approaches, these methods can be applied relatively easily to large-scale models.
\citet{yu1994rates} demonstrated uniform convergence of empirical processes under mixing conditions and derived its convergence rate. \citet{mohri2008rademacher} introduced Rademacher complexity to analyze predictive risks in stationary mixing processes. \citet{berti2009rate} established uniform convergence of empirical processes under exchangeability. \citet{mohri2010stability} proved data-dependent Rademacher complexity bounds for various types of mixing processes. \citet{agarwal2012generalization} derived upper bounds on generalization errors for general loss functions and mixing processes. \citet{kuznetsov2015learning} established an upper bound on predictive errors for stochastic processes without assuming stationarity or mixing. \citet{dagan2019learning} derived a generalization error bound for dependent data that satisfied the Dobrushin condition. \citet{daskalakis2019regression} and \citet{kandiros2021statistical} applied the Ising model, analyzed several linear models with dependent data, and derived novel estimation risk bounds.
These studies yielded general results, but they did not consider scenarios where the number of parameters tends to infinity, such as in over-parameterization.

\subsection{Organization}

Section \ref{sec:setting} presents the stochastic regression problem and defines the estimators with interpolation.
Section \ref{sec:result} develops a non-asymptotic risk bound for the estimator and introduces the assumptions.
Section \ref{sec:benign} derives the convergence rate of the risk under an additional assumption on the covariance matrix of the data.
Section \ref{sec:derivation} provides an overview of the proofs of the main results.
Section \ref{sec:example} presents several examples of Gaussian processes as covariates.
Section \ref{sec:conclusion} states the conclusions of this study.
The appendix contains the main proofs. 

\subsection{Notation}
For a vector $b \in \R^d$, $b^{(i)}$ denotes the $i$-th element of $b$ for $i=1,...,d$, and $\|b\|^2 := \sum_{i=1}^d (b^{(i)})^2$ denote the Euclidean norm.
For a square matrix $A \in \R^{d}\otimes \R^d$, $A^{(i_1,i_2)} \in \R$ denotes an element on the $i_1$-th row and $i_2$-th column of $A$ for $i_1,i_2 = 1,...,d$.
$\|A\| := \sup_{z \in \R^d: \|z\|=1} \|T z\|$ is an operator norm of $A$, and $\mu_i(A)$ is the $i$-th largest eigenvalue of $A$, and $\trace(A) = \sum_{i} \mu_i(A)$ denotes a trace of $A$.
For positive sequences $\{a_n\}_n$ and $\{b_n\}_n$, $a_n \lesssim b_n$ and $a_n = O(b_n)$ indicate that there exists $C>0$ such that $a_n \leq C b_n$ for any $n \geq \Bar{n}$ with some $\Bar{n} \in \N$.
$a_n \prec b_n$ and $a_n = o(b_n)$ denote that for any $C>0$, $a_n \leq C b_n$ holds true for any $n \geq \Bar{n}$ with some $\Bar{n} \in \N$.
$a_n \asymp b_n$ denotes that both $a_n \lesssim b_n$ and $b_n \lesssim a_n$ hold true.
For $a \in \R$, $\log^a n$ denotes $(\log n)^a$.
For an event $E$, $\mone\{E\}$ is an indicator function such that $\mone\{E\} = 1$ if $E$ is true and $\mone\{E\} = 0$ otherwise.
For arbitrary random variables $r_{1},\ldots,r_{i}$, let $\Ep_{r_{1},\ldots,r_{i}}$ denote the conditional expectation given all random variables other than $r_{1},\ldots,r_{i}$.
$\N_{0}$ indicates the union of $\N$ and $\left\{0\right\}$.
For a positive semi-definite matrix with a form $\Sigma=U\Lambda U^{\top}$ with a diagonal matrix $\Lambda$ and an orthogonal matrix $U$, we define $\Sigma_{0:k}:=[e_{1},\ldots,e_{k}]\mathrm{diag}\{\lambda_{1},\ldots,\lambda_{k}\}[e_{1},\ldots,e_{k}]^{\top}$ and $\Sigma_{k:\infty}:=[e_{k+1},\ldots,e_{p}]\mathrm{diag}\{\lambda_{k+1},\ldots,\lambda_{p}\}[e_{k+1},\ldots,e_{p}]^{\top}$
where $e_{i}$ are the column vectors of $U=[e_{1},\ldots,e_{p}]$. 

\section{Setting and assumption} \label{sec:setting}

\subsection{Gaussian process with spatio-temporal covariance} \label{sec:setting_process}

We first define a $p$-dimensional covariate process $\{x_{t}\}_{t=1}^n$ and the corresponding design matrix $X=[x_{1},\ldots,x_{n}]^{\top}\in\R^{n}\otimes\R^{p}$. We construct the covariate process by multiplying temporal and spatial covariance matrices with a random matrix whose entries are i.i.d. This approach generalizes separable models in spatial and spatio-temporal statistics \citep[for details, see][]{Cre93,KJ99,CGS21} and encompasses studies on random matrices with temporal dependence \citep{couillet2015rmt,tian2022joint,tian2022ratio}.

Consider positive definite matrices $\Xi_{i,n}\in\R^{n}\otimes\R^{n}$, $i=1,\ldots,p$ and $\Sigma\in\R^{p}\otimes\R^{p}$, that represent the temporal and the spatial covariance, respectively.
We also introduce
$Z=[z_{1},\ldots,z_{p}]$, $z_{i}\sim N(\mathbf{0},I_{n})$ as a matrix with standard Gaussian i.i.d. entries.
Let $\lambda_1 \geq \lambda_2 \geq \cdots \geq \lambda_p > 0$ be the eigenvalues of $\Sigma$, $\Lambda=\mathrm{diag}\{\lambda_{1},\ldots,\lambda_{p}\}$ be a diagonal matrix, and $U\in \R^{p}\otimes \R^{p}$ be an orthogonal matrix such that $\Sigma=U\Lambda U^{\top}$.
Then, we set the design matrix $X=[x_{1},\ldots,x_{n}]^{\top}\in\R^{n}\otimes\R^{p}$ for the covariates as
\begin{align}
 X=\left[\Xi_{1,n}^{1/2}z_{1},\ldots,\Xi_{p,n}^{1/2}z_{p}\right]\Lambda^{1/2}U^{\top}. \label{def:proceess}
\end{align}
If there exists $\Xi_n \in \R^n \otimes \R^n$ with $\Xi_{i,n} = \Xi_n$ for all $i=1,...,n$, the definition \eqref{def:proceess} is simplified to
\begin{align}
    X = \Xi_n^{1/2} Z \Lambda^{1/2} U^\top, \label{def:process_homo}
\end{align}
which we call a separable model.
The covariate process $\{x_{t}\}_{t=1}^n$ has the  spectral decomposition, 
that is, $\{x_{t}^{\top}u_{i}\}_{t}$ and $\{x_{t}^{\top}u_{j}\}_{t}$ with the column vectors $\{u_{i}\}$ of $U$ and $i\neq j$ are independent of each other.
We set this independence to yield the concentration inequalities for some random matrices, which play important roles in our analysis (see Appendix \ref{sec:proof_hetero}).
This property appears in some physical models with stochastic partial differentiable equations; see \citet{lototsky2009statistical,lototsky2017stochastic}.

The process has simple (possibly lagged) autocovariance matrices as follows: for all $t_{1},t_{2} \in \{1,2,...,n\}$, the autocovariance matrix is written as
\begin{align}
    \Ep\left[x_{t_{1}}x_{t_{2}}^{\top}\right]=U\mathrm{diag}\left\{\Xi_{1,n}^{(t_{1},t_{2})}\lambda_{1},\ldots,\Xi_{p,n}^{(t_{1},t_{2})}\lambda_{p}\right\}U^{\top}. \label{def:covariance}
\end{align}
If all $\Xi_{i,n}$ are Toeplitz matrices, i.e. their $(t_1,t_2)$-th elements dependent only on $|t_{1}-t_{2}|$, the covariance \eqref{def:covariance} is further simplified.
Note that the autocovariance matrix of $x_{t}$, i.e., $\Ep [x_{t}x_{t}^{\top}]$, is not necessarily equal to the spatial covariance matrix $\Sigma$.  
This fact does not matter in our study. 

We present an auto-regressive moving-average (ARMA) process as simple examples of processes with these structures.
Let $\{w_{t}\}_{t \in \N}$ be a sequence of independent $p$-dimensional Gaussian vectors whose mean is zero and covariance matrix is $Q$ with $\mathrm{rank}{(Q)} > n$.

\begin{example}[vector ARMA process] \label{example:vector_arma}
We present a vector ARMA process with the structure \eqref{def:proceess}:
\begin{align}
    x_{t}=\sum_{j=1}^{\ell_1}\rho_{j}x_{t-j} + w_{t} + \sum_{j=1}^{\ell_2}\varphi_{j}w_{t-j},\label{def:arma_hetero}
\end{align}
where $\rho_j,\varphi_j \in \R^p \otimes \R^p$ are coefficient matrices with spectral decompositions with an orthogonal basis $\left\{e_{k}\right\}$ such that $\rho_{j}=\sum_{k=1}^{\ell_{1}}\rho_{j,k}e_{k}e_{k}^{\top}$, and $\varphi_{j}=\sum_{k=1}^{\ell_{2}}\varphi_{j,k}e_{k}e_{k}^{\top}$.
We also assume that $Q=\sum_{k=1}^{p}q_{k}e_{k}e_{k}^{\top}$ and represent $e_{k}^{\top}w_{t}=\sqrt{q_{k}}z_{t,k}$ with i.i.d.\ $z_{t,k}\sim N(0,1)$.
Here, we define $\varpi_{1,k}\left(z\right)=1-\sum_{j=1}^{\ell_1}\rho_{j,k}z^{j}$ and $\varpi_{2,k}\left(z\right)=1+\sum_{j=1}^{\ell_2}\varphi_{j,k}z^{j}$ as characteristic polynomials of the AR and MA parts, respectively, and assume that $\varpi_{1,k}\left(z\right)\neq0$ for all $z$ with $|z|\le 1$ and they do not have roots on the unit circle.
There exist causal MA representations $e_{k}^{\top}x_{t}=\sum_{j=0}^{\infty}\phi_{j,k}e_{k}^{\top}w_{t-j}$ with $\{\phi_{j,k}\in\R\}$ such that $\varpi_{2,k}(z)/\varpi_{1,k}(z)=\sum_{j=0}^{\infty}\phi_{j,k}z^{j}$ \citep[Theorem 3.1.1]{BD1991Time}.
It holds that $\Xi_{k,n}^{(t,t+h)}=\sum_{j=0}^{\infty}\phi_{j,k}\phi_{j+|h|,k}/\sum_{j=0}^{\infty}\phi_{j,k}^{2}$ and $\Sigma=\sum_{k=1}^{p}q_{k}(\sum_{j=0}^{\infty}\phi_{j,k}^{2})e_{k}e_{k}^{\top}$.
\end{example}

\begin{remark}
    Note that the representation of $\Xi_{k,n}$ and $\Sigma$ are not unique.
    We see that $\Xi_{k,n}^{\rm new}:=c\Xi_{k,n}$ and $\Sigma^{\rm new}:= c^{-1}\Sigma$ with a scaling factor $c>0$ give the same covariance structure for $x_{t}$ as $\Xi_{k,n}$ and $\Sigma$.
    One of the reasons to introduce the MA representation with $\{\phi_{j,k}\}$ is to let these representations be unique.
\end{remark}

\subsection{Stochastic regression problem and interpolation estimator}

We consider the stochastic regression problem with the covariate process $\{x_t\}_{t=1}^n$ defined above.
Suppose that we have $n$ observations, $(x_1,y_1),...,(x_n,y_n) \in \R^p \times \R$ with dimension $p$, which follow the stochastic linear regression model \eqref{def:model} for $t=1,...,n$ with an unknown true parameter, $\beta^* \in \R^p$.
$\{\varepsilon_t : t =1,...,n\}$ is a Gaussian process such that $\mathcal{E}:=(\varepsilon_{1},\ldots,\varepsilon_{n})^{\top}\sim N\left(\mathbf{0},\Upsilon_{n}\right)$, where $\Upsilon_{n}\in\R^{n} \otimes \R^{n}$ is a positive definite matrix denoting the autocovariance of $\varepsilon_{t}$. 
Note that $x_{t}$ and $\varepsilon_{t}$ are independent of each other.
We define $Y = (y_1,...,y_n)^\top \in \R^n$ as a design vector.

We consider an interpolation estimator with the minimum norm:
\begin{align}
    \hat{\beta} \in \argmin_{\beta \in \R^p} \|\beta\|^2 \mbox{~~s.t.~~} \textstyle\sum_{t=1}^n (y_t - \langle \beta, x_t \rangle)^2 = 0, \label{def:interpolator}
\end{align}
with norm $\|\cdot\|$ for $\R^p$.
A solution satisfying this constraint in \eqref{def:interpolator} is guaranteed to exist if $\{x_t\}_{t=1}^n$ spans an $n$-dimensional linear space, i.e., if $\R^p$ has a larger dimensionality than $n$ (for example,  $\R^p = \R^p$ and $p \geq  n$), then multiple solutions satisfy the linear equation $ Y= \langle X, \beta \rangle$.
The interpolation estimator \eqref{def:interpolator} can be rewritten as
\begin{align}
    \hat{\beta} = (X^\top X)^\dagger X^\top Y = X^\top (XX^\top)^{-1} Y, \label{eq:betahat} 
\end{align}
where $^\dagger$ denotes the pseudo-inverse of matrices.

We study an excess prediction risk of the interpolation estimator $\hat{\beta}$ as 
\begin{align}
    R(\hat{\beta}) := \Ep^* [(y^* - \langle \hat{\beta}, x^* \rangle )^2 - (y^* - \langle {\beta}^*, x^* \rangle )^2], \label{def:risk}
\end{align}
where $(x^\ast, \varepsilon^\ast)$ is an independent random element such that $x^\ast \sim N(\mathbf{0},\Sigma)$, $\varepsilon^\ast\sim N(0,\sigma_\varepsilon^2)$ with arbitrary $\sigma_\varepsilon>0$, and $y^\ast=(x^\ast)^\top \beta^\ast+\varepsilon^\ast$ and  expectation  $\Ep^*[\cdot]$ is taken with respect to $(x^*, y^*)$.
The setting of $(x^*, y^*)$ can be justified as an approximation of $(x_{T},y_{T})$ with large $T$ when $x_{t}$ is mixing, or $x_{t}$ and $x_{t+h}$ with large $h$ are asymptotically independent.

\subsection{Effective ranks and numbers}

In preparation for our analysis, we define several notions related to matrices.
The first is the \textit{effective rank} of matrices, which measures the complexity of a matrix using its eigenvalues.
\begin{definition}[effective rank] \label{def:effective_rank}
For a matrix $T \in \R^{p} \otimes \R^{p}$ and $k \in  \N$, we define two types of effective ranks of $T$:
\begin{align*}
    r_k(T) = \frac{\sum_{i > k} \mu_i(T)}{ \mu_{k+1}(T)}, \mbox{~and~} R_k(T) = \frac{( \Sigma_{i > k} \mu_i(T))^2}{(\sum_{i > k} \mu_i(T)^2)}.
\end{align*}
Furthermore, we define an effective number of bases with constants $b > 0$ and $n \in \N$:
\begin{align*}
    k^*(b) = \min \{k \ge 0: r_k(\Sigma) \geq bn \}.
\end{align*}
If the set is empty, we set $k^*(b) = \infty$.
\end{definition}
This notion has been used in studies related to the matrix concentration inequality \citep{koltchinskii2017concentration}, and its application to high-dimensional linear regression \citep{bartlett2020benign}. This measure is based on the degree of decay of the eigenvalues, rather than the number of nonzero eigenvalues.

\section{Non-asymptotic risk analysis} \label{sec:result}

We present a non-asymptotic analysis of the excess risk of the interpolation estimator. Subsequently, we provide a sufficient condition on the covariance matrices of the covariates for the excess risk to converge to zero.

\subsection{Assumption: coherent temporal covariance}

We provide an assumption for our analysis of the excess risk. Importantly, we require an assumption regarding the coherence of the temporal covariance $\Xi_{0,n}$, as defined in \eqref{def:proceess}, as stated below.
\begin{assumption}[coherent temporal covariance]\label{asmp:coherence}
    There exist a positive number $\epsilon\in(0,1]$ and a family of $n\times n$ positive definite matrices $\Xi_{0,n} \in \R^n \otimes \R^n$ (reference matrices) such that for all $n\in\N$ and $i=1,\ldots,p$, we have
    \begin{align*}
        \epsilon\le \inf_{i,n}\mu_{n}(\Xi_{0,n}^{-1}\Xi_{i,n})\le \sup_{i,n}\mu_{1}(\Xi_{0,n}^{-1}\Xi_{i,n})\le 1/\epsilon.
    \end{align*}
\end{assumption}
Under this assumption, we have $\sup_{n \in \N}\max_{i=1,\ldots,n}(\|\Xi_{0,n}^{-1}\Xi_{i,n} - I\|\vee\|\Xi_{i,n}^{-1}\Xi_{0,n} - I\|)\leq (1-\epsilon)/\epsilon$.
In summary, this assumption aims to preserve the spectral structure of $x_{t}$ even under temporal dependence.
A similar assumption has been observed in high-dimensional time-series analysis. For example, \citet{bi2022spiked} suppose a time-uniform spectral gap condition on lagged autocovariance matrices to conserve the spectral structure.

Let us introduce some notation used within this section: $\Sigma_{0:k}^{\dagger}$ has the representation $\Sigma_{0:k}^{\dagger}=\sum_{i=1}^{k}\lambda_{i}^{-1}e_{i}e_{i}^{\top}$.
In addition, we use the notation
$A_{k}=\sum_{j>k}\lambda_{j}\tilde{z}_{j}\tilde{z}_{j}$, where $\tilde{z}_{j}=\Xi_{0,n}^{-1/2}\Xi_{j,n}^{1/2}z_{j}$ for all $k=0,\ldots,p$ under Assumption \ref{asmp:coherence}. 
In Section \ref{sec:example}, we show that several stochastic processes have covariance matrices that satisfy this assumption.

\subsection{Upper bound on the excess risk}
We first provide the upper bound on the risk.
Here, we utilize the effective rank of $\Sigma$ and weighted norm of $\beta^*$, which is projected onto a subspace spanned by $\Sigma_{k^*:\infty}$ and $\Sigma_{0:k}^{\dagger}$.

\begin{theorem}\label{thm:upper}
Consider the interpolation estimator $\hat{\beta}$ in \eqref{def:interpolator} for the stochastic regression problem \eqref{def:model}.
Suppose that Assumption \ref{asmp:coherence} and $\lambda_{n+1}>0$ holds.
Then, there exist constants $b=b(\epsilon)\ge 1$ and $c=c(\epsilon)\ge 1$ dependent only on $\epsilon$ such that for all $\delta \in (0,1)$ with $\delta<1-c\exp(-n/c)$: (i) if $k^{*}\ge n/c$ with $k^{\ast}:=k^{\ast}(b)$ holds, then $\Ep_{\mathcal{E}} R(\hat{\beta})\ge 1/b^{2}c^{2}\left\|\Upsilon_{n}^{-1}\Xi_{0,n}\right\|$ holds with probability at least $1-c\exp(-n/c)$; (ii) otherwise, it holds that
\begin{align*}
    R(\hat{\beta})&\le  c\left(\left\|\beta^{\ast}\right\|_{\Sigma_{k^{*}:\infty}}^{2}+\left\|\beta^{\ast}\right\|_{\Sigma_{0:k^{*}}^{\dagger}}^{2}\left(\frac{\sum_{i>k^{*}}\lambda_{i}}{n}\right)^{2}\right)+c\|\Xi_{0,n}^{-1}\Upsilon_{n}\|\left(\frac{k^{*}}{n}+\frac{n}{R_{k^{*}}\left(\Sigma\right)}\right)\log(\delta^{-1})
\end{align*}
with probability at least $1-\delta-c\exp(-n/c)$.
\end{theorem}

The upper bound is composed of two elements: the first term bounds the bias part of the risk, and the second term represents the variance.
We will describe a way to derive this bound in Section \ref{sec:derivation}.

An important implication of this theorem is that the effect of temporal dependence structures on benign overfitting is negligible under the coherence of temporal covariance matrices.
Let us compare our results with the bound for i.i.d. sub-Gaussian settings: a combination of \citet{bartlett2020benign} and \citet{tsigler2023benign} 
gives an upper bound as
\begin{align}
    R(\hat{\beta})&\le c\left(\left\|\beta^{\ast}\right\|_{\Sigma_{k^{*}:\infty}}^{2}+\left\|\beta^{\ast}\right\|_{\Sigma_{0:k^{*}}^{\dagger}}^{2}\left(\frac{\sum_{i>k^{*}}\lambda_{i}}{n}\right)^{2}\right)+c\sigma_{\varepsilon}^{2}\left(\frac{k^{*}}{n}+\frac{n}{R_{k^{*}}\left(\Sigma\right)}\right)\log(\delta^{-1}),\label{ineq:tsigler}
\end{align}
with probability at least $1-\delta-c\exp(-n/c)$, where $\sigma_{\varepsilon}>0$ is the upper bound on the sub-Gaussian norms of $\varepsilon_{i}$.
Our bound given by Theorem \ref{thm:upper} depends on $\Sigma$ in the same manner as \eqref{ineq:tsigler} under the coherence of temporal covariances such as Assumption \ref{asmp:coherence} and $\limsup_{n\to\infty}\|\Xi_{0,n}^{-1}\Upsilon_{n}\|<\infty$.
It is worth mentioning that setting $\Xi_{0,n}=I_{n}$ and $\Upsilon_{n}=\sigma_{\varepsilon}^{2}I_{n}$ coincides the two bounds. 
 
We provide additional insights by comparing Theorem \ref{thm:upper} with other high-dimensional time-series studies:  
(i) Theorem \ref{thm:upper} depends on the closeness of the covariances of \( x_{t} \) and \( \varepsilon_{t} \), i.e., \( \|\Xi_{0,n}^{-1} \Upsilon_n\| \), rather than on the dependence between \( x_{t} \) and \( \varepsilon_{t} \).  
This is an unusual property in time-series analysis; previous studies on sparsity-based high-dimensional analysis \citep[e.g.,][]{Basu2015Regularized,wu2016performance,Wong2020Lasso} rely on the long-term dependence structures of \( x_{t} \) and/or \( \varepsilon_{t} \), but the coherence of their structures is not necessarily beneficial.  
(ii) The upper bound given by Theorem \ref{thm:upper} depends on the \( \ell^{2} \)-norm of \( \beta^* \), in contrast to \citet{Basu2015Regularized}, which depends on the \( \ell^{1} \)-norm (or the number of nonzero coordinates) of \( \beta^* \).  
(iii) The variance term increases inversely with the smallest eigenvalue of \( \Xi_{0,n} \) when we assume \( \Upsilon_{n} = I_{n} \), which indicates that the covariate process should not degenerate in the time direction.  
This point can be regarded as a time-series analog of the restricted eigenvalue condition in high-dimensional analysis \citep{bickel2009simultaneous,van2008high}.  

\begin{remark}[implicit temporal decorrelation] \label{remark:implicit_decorrelation}
Theorem \ref{thm:upper} demonstrates an implicit temporal decorrelation effect induced by the interpolation estimator \eqref{def:interpolator}.
If there exists $\sigma_{\varepsilon}>0$ such that $\Xi_{0,n}^{-1}\Upsilon_{n}=\sigma_{\varepsilon}^{2}I_{n}$, then the bound in Theorem \ref{thm:upper} is identical to that in the i.i.d.\ case \eqref{ineq:tsigler}.
This observation suggests that when $\Xi_{0,n}^{-1} \Upsilon_n = \sigma_\varepsilon^2 I_n$, the interpolation estimator \textit{implicitly decorrelates} temporal dependence, even if the true values of $\Xi_{0,n}$ or $\Upsilon_{n}$ are unknown.
Theorem \ref{thm:upper} can thus be interpreted as an approximation of this implicit decorrelation.
\end{remark}

\subsection{Lower bound on the excess risk}

We establish a lower bound for the risk.
To this end, we consider a randomized version of $\beta^*$ using a prior distribution.
We adopt this setting only in this section to derive the lower bound.
We also assume that $U=I_{p}$ without loss of generality.

\begin{assumption}[prior of $\beta^*$]\label{asmp:prior}
    Let $\Bar{\beta}\in\R^{p}$ be an arbitrary nonrandom vector and $\beta^{\ast}$ is an $\R^{p}$-valued random vector such that we have
    \begin{align*}
        \left( U^{\top}\beta^{\ast}\right)^{(i)}=R_{i}\Bar{\beta}^{(i)},\ i=1,\ldots,p,
    \end{align*}
    where $\{R_{i}\}_{i=1,\ldots,p}$ is a sequence of i.i.d.\ random variables with $P(R_{i}=1)=P(R_{i}=-1)=1/2$ and independent of other random variables.
\end{assumption}

This setting is employed in \cite{tsigler2023benign} for the i.i.d. data scenario. The response variable $\{y_t\}_{t=1}^n$ is assumed to be generated by the regression model \eqref{def:model}, conditioned on $\beta^*$. Within this framework, we derive the following lower bound, which considers the expectation of the risk with respect to $\beta^*$ as part of the results.

\begin{theorem} \label{thm:lower}
The following statements hold:
\begin{enumerate}
    \item[(i)] Under the same assumptions as Theorem \ref{thm:upper}, there exist constants $b=b(\epsilon)\ge 1$ and $c=c(\epsilon)\ge 1$ dependent only on $\epsilon$ such that if $k^{*}\le n/c$ with $k^{\ast}:=k^{\ast}(b)$, then we have
\begin{align*}
    \Ep_{\mathcal{E}} [R(\hat{\beta})]\ge \frac{1}{c\left\|\Upsilon_{n}^{-1}\Xi_{0,n}\right\|}\left(\frac{k^{*}}{n}+\frac{n}{R_{k^{*}}\left(\Sigma\right)}\right),
\end{align*}
with probability at least $1-c\exp(-n/c)$.
\item[(ii)] Under Assumption \ref{asmp:prior} and the assumptions in Theorem \ref{thm:upper}, there exist constants $b=b(\epsilon)\ge 1$ and $c=c(\epsilon)\ge 1$ dependent only on $\epsilon$ such that for any $k\in\{1,\ldots,p\}$ with $r_{k}(\Sigma)\ge bn$, with probability at least $1-10e^{-n/c}$, we have
    \begin{equation*}
        \Ep_{\mathcal{E},\beta^{\ast}}[R(\hat{\beta})]\ge \frac{1}{2}\sum_{i=1}^{p}\frac{\lambda_{i}\left(\Bar{\beta}^{(i)}\right)^{2}}{\left(1+\frac{c\lambda_{i}n}{\lambda_{k+1}r_{k}(\Sigma)}\right)^{2}}.
    \end{equation*}
\end{enumerate}
\end{theorem}

These lower bounds demonstrate the tightness of each term in the upper bound in Theorem \ref{thm:upper}.
Specifically, the first lower bound (i) corresponds to the variance term in Theorem \ref{thm:upper}, while the second bound corresponds to the bias term, up to some constants.

Regarding the comparison with the i.i.d. setting, the lower bounds match those in the i.i.d. setting (Theorem 10 of \citealp{tsigler2023benign}) up to a constant factor.
More precisely, the result provides several insights: (i) the dependence among data does not affect the bias term of the risk, as shown in Theorem \ref{thm:upper}, and (ii) the dependence affects the variance term in the bounds.

\section{Benign covariance: convergence rate analysis} \label{sec:benign}

We present sufficient conditions under which the upper bound converges to zero as \( n \to \infty \).  
Recall that both \( \Sigma \) and its eigenvalues may depend on the sample size \( n \), such that \( \Sigma = \Sigma_n \) for \( n \in \mathbb{Z} \).

\subsection{Preparation}

Further, we introduce a characterization on the (spatial) covariance, $\Sigma$, developed by \citet{bartlett2020benign}.

\begin{definition}[benign covariance] \label{def:benign_covariance_2}
    The covariance matrix $\Sigma$ is \textit{benign}, if for all $\beta_{n}^{\ast}\in\R^{p_{n}}$ with $\sup_{n\in\N}\|\beta_{n}^{\ast}\|<\infty$, the following sequences $\{\tau_n: n \in \N\}$ and $\{\eta_n: n \in \N\}$ are convergent to $0$:
    \begin{align*}
        \tau_n :=\left\|\beta_{n}^{\ast}\right\|_{\Sigma_{k^{*}:\infty}}^{2}+\left\|\beta_{n}^{\ast}\right\|_{\Sigma_{0:k^{*}}^{\dagger}}^{2}\left(\frac{\sum_{i>k^{*}}\lambda_{i}}{n}\right)^{2},\mbox{~~and~~}
        \eta_n := \max\left\{ \frac{k^*}{n}, \frac{n}{R_{k^*}(\Sigma)} \right\}.
    \end{align*}
\end{definition}

The study provides specific examples of benign covariance.
Although the analysis of covariance does not have a significant direct relationship with dependent data, we included it for completeness.

\begin{theorem}\label{thm:benign}
    Let $\lambda_{i,n}=\mu_{i}(\Sigma_{n})$ for all $i=1,\ldots,p_{n}$ and $n\in\N$.
    \begin{enumerate}
        \item[(i)] If we have $\lambda_{i,n}=i^{-\gamma}\mathbf{1}\{i\le p_{n}\}$ for $\gamma\in(0,1)$ and $n\prec p_{n}\prec n^{1/(1-\gamma)}$, then $\Sigma_{n}$ is benign.
        Moreover, we have $\tau_{n}=\mathcal{O}(p_{n}^{1-\gamma}n^{-1})$ and
        \begin{equation*}
            \eta_{n}=\begin{cases}
                \Theta_{\gamma}\left((n/p_{n})^{1/\gamma-1}+n/p_{n}\right) & \text{if }\gamma\in(0,0.5),\\
                \Theta_{\gamma}\left((n/p_{n})^{1/\gamma-1}+\frac{\log (p_{n}/n)}{p_{n}/n}\right) &\text{if }\gamma=0.5,\\
                \Theta_{\gamma}\left((n/p_{n})^{1/\gamma-1}\right) &\text{if }\gamma\in(0.5,1).
            \end{cases}
        \end{equation*}
        \item[(ii) ] If we have $\lambda_{i,n}=i^{-1}\mathbf{1}\{i\le p_{n}\}$ and $n\exp(\omega(\sqrt{n}))\prec p_{n}\prec n\exp(o(n))$, then $\Sigma_{n}$ is benign.
        Moreover, we have $\tau_{n}=\mathcal{O}(\log(p_{n}/n)/n)$ and $\eta_{n}=\Theta_{\gamma}(1/\log(p_{n}/n)+n/(\log(p_{n}/n))^{2})$.
        \item[(iii)] If we have $\lambda_{i,n}=(\gamma_{i}+\epsilon_{n})\mathbf{1}\{i\le p_{n}\}$ with $\gamma_{i}=\Theta(\exp(-i/\tau))$, $p_{n}=\omega(n)$, $\epsilon_{n}p_{n}=n\exp(-o(n))$, and $\epsilon_{n}p_{n}=o(n)$, then $\Sigma_{n}$ with $\|\Sigma_{n}\|=1$ is benign.
        Moreover, we have $\tau_{n}=\mathcal{O}(\epsilon_{n}p_{n}/n)$ and $\eta_{n}=\mathcal{O}(\log(n/\epsilon_{n}p_{n}))/n+\max\{1/n,n/p_{n}\})$.
        \item[(iv)] If we have
        \begin{equation*}
            \lambda_{i,n}=\begin{cases}
                1  & \text{if }i\le s_{n},\\
                \epsilon_{n} &\text{if }s_{n}<i\le p_{n},\\
                0 & \text{otherwise,}
            \end{cases}
        \end{equation*}
        with $\epsilon_{n}>0$, $s_{n}=o(n)$, $p_{n}=\omega(n)$, and $\epsilon_{n}p_{n}=o(n)$, then $\Sigma_{n}$ is benign.
        Moreover, we have $\tau_{n}=\mathcal{O}(\epsilon_{n}+(\epsilon_{n}p_{n}/n)^{2})$ and $\eta_{n}=\mathcal{O}(s_{n}/n+n/p_{n})$.
        \item[(v)] If we have 
        \begin{equation*}
            \lambda_{i,n}=\begin{cases}
                1  & \text{if }i=1,\\
                \epsilon_{n}\frac{1+\theta^{2}-2\theta\cos(k\pi/(p_{n}+1))}{1+\theta^{2}-2\theta\cos(\pi/(p_{n}+1))}&\text{if }1<k\le p_{n},\\
                0 & \text{otherwise,}
            \end{cases}
        \end{equation*}
            with $\theta<1$ and $\epsilon_{n}>0$, $p_{n}=\omega(n)$ and $\epsilon_{n}p_{n}=o(n)$, then $\Sigma_{n}$ is benign.
            Moreover, we have $\tau_{n}=\mathcal{O}(\epsilon_{n}+(\epsilon_{n}p_{n}/n)^{2})$ and $\eta_{n}=\mathcal{O}(1/n+n/p_{n})$.
    \end{enumerate}
\end{theorem}
In case (i), $\Sigma$ is independent of $n$; otherwise, it depends on $n$. 
Cases (i) and (ii) yield infinite nonzero eigenvalues, while cases (iii) and (iv) yield $p_n$ nonzero eigenvalues, which represent finite dimensionality increasing in $n$. 
Note that all the sequences $\{\zeta_n\}_n, \{\tau_n\}_n$ and $\{\eta_n\}_n$ are simply calculated from the proof of Theorem 31 in \citet{bartlett2020benign}.

\subsection{Convergence rate}

We derive a general form of convergence rates of the risk revealed in Section \ref{sec:result}.
In preparation, we define a notation for the temporal correlation
\begin{align}
    \nu_{n} := \|\Xi_{0,n}^{-1}\Upsilon_{n}\|. \label{def:non-degenerate_correlation_hetero}
\end{align}
In the study by \citet{Basu2015Regularized}, a similar condition was required, namely, that the lower bound on the spectral density of the sums of lagged autocovariance matrices should be positive. By contrast, we employed a non-asymptotic approach to study the eigenvalues of matrices under finite time widths.

We derive a convergence rate of risk by using the notations for the benign covariance. 
\begin{proposition}[convergence rate] \label{prop:rate_homo}
Consider the setting and assumption for Theorem \ref{thm:upper}.
Suppose that $\Sigma$ is a benign covariance, as stated in Definition \ref{def:benign_covariance_2}.
Also suppose that $\|\beta^*\| = O(1)$ as $n \to \infty$.
Then, we have
\begin{align*}
    R(\hat{\beta}) = O_P \left(\tau_n + \nu_n \eta_n\right), \mbox{~as~}n \to \infty.
\end{align*}
\end{proposition}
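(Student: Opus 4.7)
The plan is to apply Theorem~\ref{thm:homo} directly and then bound each of the three terms on its right-hand side using the benign covariance conditions. Since $\eta_{n}=o(1)$ implies $k^{\ast}/n=o(1)$, for $n$ large enough we lie in case (ii) of Theorem~\ref{thm:homo}; for any fixed $\delta\in(0,1)$ it then yields
\begin{align*}
    R(\hat{\beta})\le c\left(\|\beta^{\ast}\|_{\Sigma_{k^{\ast}:\infty}}^{2}+\|\beta^{\ast}\|_{\Sigma_{0:k^{\ast}}^{\dagger}}^{2}\left(\frac{\sum_{i>k^{\ast}}\lambda_{i}}{n}\right)^{2}\right)+c\nu_{n}\eta_{n}\log(\delta^{-1})
\end{align*}
with probability at least $1-\delta-c\exp(-n/c)$. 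To convert this into an $O_{P}$ statement, I will fix an arbitrary $\epsilon>0$, choose $\delta=\delta(\epsilon)$ small enough that $\delta+c\exp(-n/c)\le\epsilon$ for all sufficiently large $n$, and absorb the resulting $\log(\delta^{-1})$ into the hidden constant.

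Two of the three terms are then straightforward. The variance term is immediate: by the definitions of $\nu_{n}$ and $\eta_{n}$, one has $\|\Xi_{n}^{-1}\Upsilon_{n}\|(k^{\ast}/n+n/R_{k^{\ast}}(\Sigma))\le 2\nu_{n}\eta_{n}$, which already matches the desired order. The first bias term is just as direct: $\Sigma_{k^{\ast}:\infty}$ has operator norm $\lambda_{k^{\ast}+1}\le\lambda_{k^{\ast}}=\tau_{n}$, so that $\|\beta^{\ast}\|_{\Sigma_{k^{\ast}:\infty}}^{2}\le\tau_{n}\|\beta^{\ast}\|^{2}=O(\tau_{n})$ by the assumption $\|\beta^{\ast}\|=O(1)$.

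The conceptually interesting step is the second bias term, in which the two factors $\|\beta^{\ast}\|_{\Sigma_{0:k^{\ast}}^{\dagger}}^{2}$ and $(\sum_{i>k^{\ast}}\lambda_{i}/n)^{2}$ look as if they move in opposite directions in $\tau_{n}$; the cancellation relies on the minimality built into $k^{\ast}=k^{\ast}(b)$. Assuming $k^{\ast}\ge 1$, one has $r_{k^{\ast}-1}(\Sigma)<bn$, which rearranges to $\sum_{i\ge k^{\ast}}\lambda_{i}<bn\lambda_{k^{\ast}}=bn\tau_{n}$, and hence $(\sum_{i>k^{\ast}}\lambda_{i}/n)^{2}\le b^{2}\tau_{n}^{2}$. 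Combined with the operator-norm bound $\|\beta^{\ast}\|_{\Sigma_{0:k^{\ast}}^{\dagger}}^{2}\le\lambda_{k^{\ast}}^{-1}\|\beta^{\ast}\|^{2}=O(\tau_{n}^{-1})$ (the case $k^{\ast}=0$ is trivial, as the term vanishes), the product is $O(\tau_{n}^{-1}\cdot\tau_{n}^{2})=O(\tau_{n})$. Summing the three contributions gives $R(\hat{\beta})=O(\tau_{n}+\nu_{n}\eta_{n})$ with probability at least $1-\epsilon$ for all large $n$, which is the claimed $O_{P}$ bound. I expect this second bias term to be the main, though mild, obstacle; the remainder is mechanical bookkeeping from Theorem~\ref{thm:homo} and Definition~\ref{def:benign_covariance}.
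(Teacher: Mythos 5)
Your proposal is correct and follows essentially the same route as the paper: invoke Theorem~\ref{thm:homo}(ii) for large $n$ (justified by $\eta_n=o(1)\Rightarrow k^*/n=o(1)$), bound $\|\beta^*\|_{\Sigma_{k^*:\infty}}^2$ and $\|\beta^*\|_{\Sigma_{0:k^*}^\dagger}^2$ by the operator norms $\lambda_{k^*+1}$ and $\lambda_{k^*}^{-1}$ respectively, use the minimality of $k^*=k^*(b)$ to get $r_{k^*-1}(\Sigma)<bn$ and hence $\sum_{i>k^*}\lambda_i\le bn\lambda_{k^*}=bn\tau_n$, so the second bias term is $O(\tau_n^{-1}\cdot\tau_n^2)=O(\tau_n)$, and finally absorb $\log(\delta^{-1})$ into the constant to get an $O_P$ statement. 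If anything your version is slightly tighter in the last step: after reaching $c\|\beta^*\|^2\lambda_{k^*}$ the paper appends a further chain of inequalities purporting $\lambda_{k^*}\le\sum_{i\ge k^*}\lambda_i/(bn)\le\trace(\Sigma)/(bn)$, but that inequality actually runs in the opposite direction (minimality gives $\lambda_{k^*}>\sum_{i\ge k^*}\lambda_i/(bn)$); those extra lines are in any case superfluous since the needed bound $O(\lambda_{k^*})=O(\tau_n)$ is already in hand, and you correctly stop there.
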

This rate consists of two terms: the first corresponds to the bias part, and the second part corresponds to the variance.
The bias part is determined by the volume of $k^*$ and the decay speed of the eigenvalues $\lambda_i$, while the variance part is determined by $\eta_n$ and the strength of temporal correlation $\nu_n$.
Given that the rate in the i.i.d.~case by \citet{tsigler2023benign} is given as
\begin{align*}
R(\hat{\beta}) = O_P \left( \tau_n+ \eta_n \right),
\end{align*}
we can see that $\nu_n^{-1}$ reveals the effect of temporal dependence.

\section{Proof outline of upper bounds} \label{sec:derivation}

Theorems \ref{thm:upper} and \ref{thm:lower} are primarily based on the upper and lower bounds for the bias term established by \citet{tsigler2023benign}.
The proof relies on the bias–variance decomposition of the excess risk $R(\hat{\beta})$ into a bias term $T_B$ and a variance term $T_V$, after which we bound each term separately.
Notably, this decomposition plays a crucial role in the proof of Theorem \ref{thm:upper}.

\begin{lemma} \label{lem:risk_decomp}
For any $\delta \in (0,1)$, we obtain the following with probability at least $1-\delta$:
\begin{align*}
    R(\hat{\beta}) \leq 2 \beta^{*\top} T_B \beta^* + 2  C_{\delta} \trace(T_V),
\end{align*}
where $C_{\delta} = 2\log(1/\delta) + 1$. Also,  $T_B$ and $T_V$ are defined as
\begin{align*}
    &T_B := (I - X^\top (X X^\top)^{-1} X) \Sigma (I - X^\top (X X^\top)^{-1}X), \\
    &T_V := \Upsilon_{n}^{1/2}\left(XX^{\top}\right)^{-1}X\Sigma X^{\top}\left(XX^{\top}\right)^{-1}\Upsilon_{n}^{1/2}.
\end{align*}
Furthermore, we have $\Ep_{\mathcal{E}} R(\hat{\beta}) \geq \beta^{*\top} T_B \beta^* + \trace(T_V)$.
\end{lemma}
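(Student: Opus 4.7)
The plan is to begin from the defining formula $R(\hat{\beta})=\Ep^{*}[(y^{*}-\langle\hat{\beta},x^{*}\rangle)^{2}-(y^{*}-\langle\beta^{*},x^{*}\rangle)^{2}]$. Writing $y^{*}-\langle\hat{\beta},x^{*}\rangle=\varepsilon^{*}+\langle\beta^{*}-\hat{\beta},x^{*}\rangle$ and $y^{*}-\langle\beta^{*},x^{*}\rangle=\varepsilon^{*}$, the squared difference is $2\varepsilon^{*}\langle\beta^{*}-\hat{\beta},x^{*}\rangle+\langle\beta^{*}-\hat{\beta},x^{*}\rangle^{2}$. Since $\varepsilon^{*}$ is independent of both $x^{*}$ and $\hat{\beta}$ and has zero mean, and $x^{*}\sim N(\mathbf{0},\Sigma)$, taking $\Ep^{*}[\cdot]$ kills the cross term and yields the fundamental identity
\[
R(\hat{\beta})=(\hat{\beta}-\beta^{*})^{\top}\Sigma(\hat{\beta}-\beta^{*}).
\]
This step uses only the stipulated law of $(x^{*},\varepsilon^{*})$ and is agnostic to any assumption on the homo- or hetero-correlated structure of the training data.

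Next I would substitute $\hat{\beta}=X^{\top}(XX^{\top})^{-1}Y$ with $Y=X\beta^{*}+\mathcal{E}$ to decompose $\hat{\beta}-\beta^{*}=-(I-P)\beta^{*}+X^{\top}(XX^{\top})^{-1}\mathcal{E}$, where $P:=X^{\top}(XX^{\top})^{-1}X$ is the orthogonal projector onto the row span of $X$. Substituting back,
\[
R(\hat{\beta})=\bigl((I-P)\beta^{*}-X^{\top}(XX^{\top})^{-1}\mathcal{E}\bigr)^{\top}\Sigma\bigl((I-P)\beta^{*}-X^{\top}(XX^{\top})^{-1}\mathcal{E}\bigr).
\]
For the high-probability upper bound, I would apply the elementary inequality $(u-v)^{\top}\Sigma(u-v)\le 2u^{\top}\Sigma u+2v^{\top}\Sigma v$ with $u=(I-P)\beta^{*}$ and $v=X^{\top}(XX^{\top})^{-1}\mathcal{E}$. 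The first summand equals $\beta^{*\top}T_{B}\beta^{*}$ exactly, since $I-P$ is symmetric and idempotent, so $(I-P)\Sigma(I-P)=T_{B}$. The second summand equals $\mathcal{E}^{\top}(XX^{\top})^{-1}X\Sigma X^{\top}(XX^{\top})^{-1}\mathcal{E}$, and writing $\mathcal{E}=\Upsilon_{n}^{1/2}\xi$ with $\xi\sim N(\mathbf{0},I_{n})$ rewrites it as the Gaussian quadratic form $\xi^{\top}T_{V}\xi$ conditional on $X$. The remaining task is to bound $\xi^{\top}T_{V}\xi\le C_{\delta}\trace(T_{V})$ with probability at least $1-\delta$ using a Laurent--Massart or Hanson--Wright-type tail estimate.

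The expectation bound is immediate from the same decomposition without the inequality: taking $\Ep_{\mathcal{E}}[\cdot]$ of $((I-P)\beta^{*}-X^{\top}(XX^{\top})^{-1}\mathcal{E})^{\top}\Sigma((I-P)\beta^{*}-X^{\top}(XX^{\top})^{-1}\mathcal{E})$, the cross term vanishes because $\Ep[\mathcal{E}]=0$, leaving $\Ep_{\mathcal{E}}[R(\hat{\beta})]=\beta^{*\top}T_{B}\beta^{*}+\Ep_{\xi}[\xi^{\top}T_{V}\xi]=\beta^{*\top}T_{B}\beta^{*}+\trace(T_{V})$, which gives the stated lower bound (as an equality).

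The main obstacle will be to pin down the stated constant $C_{\delta}=2\log(1/\delta)+1$ in the Gaussian quadratic form concentration. The standard Laurent--Massart bound gives $\xi^{\top}T_{V}\xi\le \trace(T_{V})+2\|T_{V}\|_{F}\sqrt{\log(1/\delta)}+2\|T_{V}\|\log(1/\delta)$ on an event of probability at least $1-\delta$; to absorb the Frobenius and operator norm terms into the trace, I would use the PSD estimates $\|T_{V}\|\le \trace(T_{V})$ and $\|T_{V}\|_{F}^{2}\le \|T_{V}\|\trace(T_{V})$ combined with $2\sqrt{ab}\le a+b$. Only Gaussianity of $\mathcal{E}$ is used at this step, so this argument applies uniformly across the homo- and hetero-correlated regimes, justifying why the lemma sits outside those two separate analyses.
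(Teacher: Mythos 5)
Your proposal follows essentially the same route as the paper: reduce the excess risk to $(\hat\beta-\beta^*)^\top\Sigma(\hat\beta-\beta^*)$, split $\hat\beta-\beta^*=-(I-P)\beta^*+X^\top(XX^\top)^{-1}\mathcal{E}$, apply $(u-v)^\top\Sigma(u-v)\le 2u^\top\Sigma u+2v^\top\Sigma v$, identify the two pieces with $\beta^{*\top}T_B\beta^*$ and $\xi^\top T_V\xi$ where $\xi=\Upsilon_n^{-1/2}\mathcal{E}\sim N(0,I_n)$, and then invoke a Gaussian quadratic-form tail bound; the expectation statement is an equality for exactly the reason you give. The paper quotes Lemma S.2 of \citet{bartlett2020benign} where you propose Laurent--Massart, but those are the same $\chi^2$-type concentration up to rearrangement. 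One small caveat on constants: your chain $\|T_V\|_F^2\le\|T_V\|\trace(T_V)$, $2\sqrt{ab}\le a+b$, $\|T_V\|\le\trace(T_V)$ gives $\xi^\top T_V\xi\le(3\log(1/\delta)+2)\trace(T_V)$, which does not match the stated $C_\delta=2\log(1/\delta)+1$; the paper's own appendix derives the even larger $(4\log(1/\delta)+2)\trace(T_V)$ for the same quantity before multiplying by $2$, so the displayed $C_\delta$ appears to be off by a constant factor in the paper itself and is not something your argument should be expected to recover exactly.
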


We briefly describe the proof outline via this lemma, as well as the implicit decorrelation by the interpolator.
Specifically, we consider the simplest structure of $x_{t}$ given by \eqref{def:process_homo}.
If $\Xi_{n}$ is non-degenerate, we see that
\begin{equation*}
    \hat{\beta}=X^{\top}(XX^{\top})^{-1}Y=X^{\top}\Xi_{n}^{-1/2}(\Xi_{n}^{-1/2}XX^{\top}\Xi_{n}^{-1/2})^{-1}\Xi_{n}^{-1/2}Y.
\end{equation*}
Therefore, we can regard the interpolator $\hat{\beta}$ in \eqref{def:interpolator} is equivalent to
\begin{equation*}
    \hat{\beta}=\argmin_{\beta\in\R^{p}}\|\beta\|^{2}\text{ s.t. }\sum_{i=1}^{n}(y_{t}^{\text{deco}}-\langle \beta,x_{t}^{\text{deco}}\rangle )^{2}=0,
\end{equation*}
where
\begin{equation*}
    Y^{\text{deco}}=\left[
    \begin{matrix}
        y_{1}^{\text{deco}}\\
        \vdots\\
        y_{n}^{\text{deco}}
    \end{matrix}
    \right]=\Xi_{n}^{-1/2}\left[
    \begin{matrix}
        y_{1}\\
        \vdots\\
        y_{n}
    \end{matrix}
    \right],\ X^{\text{deco}}=\left[
    \begin{matrix}
        (x_{1}^{\text{deco}})^{\top}\\
        \vdots\\
        (x_{n}^{\text{deco}})^{\top}
    \end{matrix}
    \right]=\Xi_{n}^{-1/2}\left[
    \begin{matrix}
        x_{1}^{\top}\\
        \vdots\\
        x_{n}^{\top}
    \end{matrix}
    \right]=Z\Lambda^{1/2}U^{\top}.
\end{equation*}
Hence, we can analyze $(X^{\text{deco}},Y^{\text{deco}})$ in the same way as our observations instead of considering the observable $(X,Y)$ itself.  
It is noteworthy that $X^{\text{deco}}$ is equivalent to the i.i.d.~case considered in \citet{bartlett2020benign} and \citet{tsigler2023benign}, and that the matrix $T_{B}$ depends only on the covariates and is independent of the noise.
As a result, we obtain an upper bound for $(\beta^{\ast})^{\top}T_{B}\beta^{\ast}$ that is identical to that of \citet{tsigler2023benign}.  

We also discuss the role of Assumption \ref{asmp:coherence} in the temporal covariance matrices.  
When the covariate takes the form \eqref{def:proceess}, a more delicate analysis of random matrices with non-homogeneous temporal dependence across coordinates is required.  
Assumption \ref{asmp:coherence} allows us to address such heterogeneity in a concise manner.  
We can again transform the problem in \eqref{def:interpolator} as follows:  
\begin{equation*}
    \hat{\beta}=\argmin_{\beta\in\R^{p}}\|\beta\|^{2}\text{ s.t. }\sum_{i=1}^{n}(y_{t}^{\text{deco}}-\langle \beta,x_{t}^{\text{deco}}\rangle )^{2}=0,
\end{equation*}
where $Y^{\text{deco}}=\Xi_{0,n}^{-1/2}Y$ and $X^{\text{deco}}=[\Xi_{0,n}^{-1/2}\Xi_{1,n}^{1/2}z_{1},\ldots,\Xi_{0,n}^{-1/2}\Xi_{p,n}^{1/2}z_{p}]\Lambda^{1/2}U^{\top}$.  
Under Assumption \ref{asmp:coherence}, even though each $\Xi_{0,n}^{-1/2}\Xi_{1,n}^{1/2}z_{i}$ is not necessarily isotropic, its deviation from the isotropic case can be uniformly bounded in $n$.  
Hence, we can extend the results for the i.i.d.~case to our dependent case via a more elaborate analysis.  

Furthermore, we obtain information specific to time-series data when decomposing the variance term.  
Regarding $\trace(T_{V})$, the temporal covariance matrix of the noise term after decorrelation is given by $\Xi_{n}^{-1/2}\Upsilon_{n}\Xi_{n}^{-1/2}$.
Therefore, using $\trace(AB)\le \|A\|\trace(B)$ for positive semi-definite matrices $A$ and $B$, we obtain an upper bound on $\trace(T_{V})$ that is identical to that of \citet{tsigler2023benign}, up to the scaling factor $\|\Xi_{n}^{-1}\Upsilon_{n}\|$.  
This factor plays the same role as $\nu_{n}$ in Section \ref{sec:benign}.

\section{Example} \label{sec:example}

As an example, we analyze multiple stochastic processes and risks of the stochastic regression problem with the processes. 

\subsection{Separable vector ARMA process}

We consider a separable vector ARMA process, $x_{t}$, is defined as
\begin{align}
    x_{t}= \sum_{i=1}^{\ell_1}\left(a_{i}I_p\right)x_{t-i} + w_{t}+\sum_{i=1}^{\ell_2}\left(b_{i}I_p\right)w_{t-i},\label{def:arma_homo}
\end{align}
where $\ell_1,\ell_2 \in \N$ is a number of lag, and $a_{i},b_{i}\in\R$ is a coefficient.
Let us assume that the characteristic polynomial of the auto-regressive (AR) part, $\varpi_{1}\left(z\right):=1-\sum_{i=1}^{\ell_1}a_{i}z^{i}$, for $z\in\C$ does not have roots for any $z$ with $|z|\le 1$, nor common roots with the characteristic polynomial for the moving-average (MA) part, $\varpi_{2}\left(z\right):=1+\sum_{i=1}^{\ell_2}b_{i}z^{i}$ in $\C$.
There exists a causal MA representation $x_{t}=\sum_{j=0}^{\infty}\phi_{j}w_{t-j}$ with a sequence $\{\phi_{j}\in\R\}$, which is square-summable and satisfies $\left(1+\sum_{i}b_{i}z^{i}\right)/\left(1-\sum_{i}a_{i}z^{i}\right)=\sum_{j=0}^{\infty}\phi_{j}z^{j}$ \citep[e.g., ][Theorem 3.1.1]{BD1991Time}. 
We can see that $x_{t}$ is a separable process such that $\Xi_{n}^{\left(t,t+h\right)}=\sum_{j=0}^{\infty}\phi_{j}\phi_{j+\left|h\right|}$ for all $t=1,\ldots,n$ and $h$ with $t+h\in\left\{1,\ldots,n\right\}$ and $\Sigma=Q$.
If those assumptions are not satisfied, stationarity may not hold; see Section 3.1 of \citet{BD1991Time}.

With the conditions on $\varpi_{1}(z)$ and $\varpi_{2}(z)$, Proposition 4.5.3 of \citet{BD1991Time} yield the following  inequality for the neighboring condition:
\begin{align*}
    \min_{z\in\C:|z|=1}\left|\frac{\varpi_{2}(z)}{\varpi_{1}(z)}\right|\le \inf_{n}\mu_{n}\left(\Xi_{n}\right)\le \sup_{n}\mu_{1}\left(\Xi_{n}\right)\le \max_{z\in\C:|z|=1}\left|\frac{\varpi_{2}(z)}{\varpi_{1}(z)}\right|.
\end{align*}
This property gives the following convergence of excess risk without proof.
\begin{corollary}
Consider the setting and assumptions for Theorem \ref{thm:upper} and that the covariate follows the process \eqref{def:arma_homo}.
Suppose that $\Sigma= Q$ is a benign covariance with sequences $\{\tau_n\}_{n \in \N}$ and $\{\eta_n\}_{n \in \N}$ in Definition \ref{def:benign_covariance_2}, and $\sup_{n}\mu_{1}(\Upsilon_{n})<\infty$ holds.
Then, we have $\nu_{n}\le \sup_{n}\mu_{1}(\Upsilon_{n})/\min_{z:|z|=1}(|\varpi_{2}(z)|/|\varpi_{1}(z)|)<\infty$ and thus we get
\begin{align*}
    R(\hat{\beta}) = O_{P}\Bigl(  \tau_n+ \eta_n \Bigr).
\end{align*}
\end{corollary}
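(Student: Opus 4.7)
The plan is to verify the hypotheses of Proposition \ref{prop:rate_homo} under the ARMA setting and then invoke it directly. Since Theorem \ref{thm:homo} and its convergence rate analysis are already in place, the only genuine work is to control the quantity $\nu_{n}=\|\Xi_{n}^{-1}\Upsilon_{n}\|$ uniformly in $n$.

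First I would recall the spectral inequality for ARMA autocovariance matrices stated just before the corollary, namely
\[
\min_{|z|=1}\left|\frac{\varpi_{2}(z)}{\varpi_{1}(z)}\right|\le \inf_{n}\mu_{n}\left(\Xi_{n}\right)\le \sup_{n}\mu_{1}\left(\Xi_{n}\right)\le \max_{|z|=1}\left|\frac{\varpi_{2}(z)}{\varpi_{1}(z)}\right|,
\]
which follows from Proposition 4.5.3 of \citet{BD1991Time} together with Example \ref{example:arma_homo}. The conditions on the characteristic polynomials (no roots on or inside the unit disk, no common roots) ensure that $\min_{|z|=1}|\varpi_{2}(z)/\varpi_{1}(z)|$ is strictly positive. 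I would then combine this with submultiplicativity of the operator norm to estimate
\[
\nu_{n}=\|\Xi_{n}^{-1}\Upsilon_{n}\|\le \|\Xi_{n}^{-1}\|\,\|\Upsilon_{n}\|=\mu_{n}(\Xi_{n})^{-1}\,\mu_{1}(\Upsilon_{n})\le \frac{\sup_{n}\mu_{1}(\Upsilon_{n})}{\min_{|z|=1}|\varpi_{2}(z)/\varpi_{1}(z)|},
\]
which is finite by the hypothesis $\sup_{n}\mu_{1}(\Upsilon_{n})<\infty$. This yields the claimed bound on $\nu_{n}$ and, in particular, $\nu_{n}=O(1)$ as $n\to\infty$.

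Having established $\nu_{n}=O(1)$ and assuming $\Sigma=Q$ is benign with the sequences $\{\tau_{n}\},\{\eta_{n}\}$ of Definition \ref{def:benign_covariance}, I would apply Proposition \ref{prop:rate_homo}, whose conclusion reads $R(\hat{\beta})=O_{P}(\tau_{n}+\nu_{n}\eta_{n})$. Absorbing the bounded factor $\nu_{n}$ into the $O_{P}$ notation gives $R(\hat{\beta})=O_{P}(\tau_{n}+\eta_{n})$, completing the proof.

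The only non-trivial step is the invocation of Proposition 4.5.3 of \citet{BD1991Time}, which controls the spectrum of a Toeplitz autocovariance matrix uniformly in its dimension by the values of the spectral density on the unit circle; the rest is a one-line operator-norm estimate followed by a citation of Proposition \ref{prop:rate_homo}. Thus there is no real obstacle, and the main care is to check that $\|\beta^{*}\|=O(1)$ is inherited from the assumptions of Theorem \ref{thm:homo} as required by Proposition \ref{prop:rate_homo}.
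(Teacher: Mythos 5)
Your proposal is correct and follows exactly the route the paper intends: you invoke the spectral inequality for the ARMA Toeplitz autocovariance (quoted from Proposition 4.5.3 of Brockwell--Davis and stated just before the corollary), bound $\nu_{n}=\|\Xi_{n}^{-1}\Upsilon_{n}\|\le\mu_{1}(\Upsilon_{n})/\mu_{n}(\Xi_{n})$ by submultiplicativity, and plug $\nu_{n}=O(1)$ into Proposition \ref{prop:rate_homo}. The paper explicitly labels this corollary as following ``without proof'' from the displayed spectral inequality, so your write-up is simply an unpacked version of the same argument.
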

This result shows that the case of ARMA has the same convergence rate as that of independent data with the same condition on $\Sigma$.

\subsection{Non-separable vector ARMA process}

We present an example of processes in \eqref{def:arma_hetero} in Example \ref{example:vector_arma}.
For the characteristic polynomials $\varpi_{1,k}\left(z\right)$ and $\varpi_{2,k}\left(z\right)$ defined in Example \ref{example:vector_arma}, we additionally assume there exists $\epsilon > 0$ such  that $\left|\varpi_{1,k}\left(z\right)\right|$ and $\left|\varpi_{2,k}\left(z\right)\right|$ are in $[\epsilon,\epsilon^{-1}]$ for all $z\in\C$ such that $\left|z\right|=1$.
Recall that we have $\Sigma=\sum_{k=1}^{p}\lambda_{k}e_{k}e_{k}^{\top}$ with $\lambda_{k}:=q_{k}(\sum_{j=0}^{\infty}\phi_{j,k}^{2})$ where $\{\phi_{j,k}\in\R\}$  satisfies $\Xi_{k,n}^{(t,t+h)}=\sum_{j=0}^{\infty}\phi_{j,k}\phi_{j+|h|,k}/\sum_{j=0}^{\infty}\phi_{j,k}^{2}$.

The following proposition verifies the equivalence of $\Sigma$ and $Q$ and the coherence condition by Assumption \ref{asmp:coherence}.

\begin{proposition}\label{prop:armah}
The following properties hold true for \eqref{def:arma_hetero}:
\begin{enumerate}
    \item[(i)] $1\le \sum_{j=0}^{\infty}\phi_{j,k}^{2}\le \epsilon^{-4}$.
    \item[(ii)] $\epsilon^{8}\le \inf_{k,n}\mu_{n}\left(\Xi_{k,n}\right)\le \sup_{k,n}\mu_{1}\left(\Xi_{k,n}\right)<\epsilon^{-4}$.
\end{enumerate}
\end{proposition}

We see that $\Sigma$ and $Q$ are equivalent by Proposition \ref{prop:armah}-(i), and 
Assumption \ref{asmp:coherence} can be satisfied by Proposition \ref{prop:armah}-(ii).
Therefore, Theorem \ref{thm:upper} immediately gives the following result without proof:

\begin{corollary}
Consider the setting in Theorem \ref{thm:upper} with covariate  $\{x_t\}_{t}$ that follows the process \eqref{def:arma_hetero}.
Suppose that $Q$ is a benign covariance with sequences $\{\zeta_n\}_{n \in \N}$ and $\{\eta_n\}_{n \in \N}$, as stated in Definition \ref{def:benign_covariance_2}.
Then, $\Sigma=\sum_{k=1}^{p}q_{k}(\sum_{j=0}^{\infty}\phi_{j,k}^{2})e_{k}e_{k}^{\top}$ is also a benign covariance and we have
\begin{align}
    R(\hat{\beta}) = O_{P}\left(\tau_{n} + \nu_{n}\eta_n \right).
\end{align}
\end{corollary}

This upper bound does not depend on dependence-based terms, such as $\Bar{\Sigma}_n$; therefore, it can be observed that the upper bound is equivalent to that in the independent data case in \citet{bartlett2020benign}.
Additionally, the derived convergence rate is identical to that in the independent case.
In other words, the same rate is achieved for the excess risk as in the independent case because the process has short memory.

\subsection{Time-varying regression model under temporal dependence}

In time-series data analysis, it is often natural to assume that model structures, such as the autocovariance functions of $x_{t}$ and/or $\varepsilon_{t}$ \citep[e.g.,][]{kuznetsov2015learning,medeiros2016L1}, and regression coefficients \citep[e.g.,][]{zhang2012inference,dangl2012predictive,karmakar2022simultaneous}, are time-varying.
Such a heterogeneous structure of observations makes theoretical analysis significantly more difficult.
However, our analysis provides a straightforward strategy to guarantee the convergence of the estimator under heterogeneity.

As one of the simplest examples, we consider the following observation model:
\begin{equation*}
    y_{t}^{\mathfrak{s},\mathfrak{n}}=\langle \beta,\mathfrak{s}_{t}x_{t}^{\text{lat}}\rangle+\mathfrak{n}_{t}\varepsilon_{t}^{\text{lat}},\ t=1,\ldots,n,
\end{equation*}
where $\mathfrak{s}=\{\mathfrak{s}_{t}\}_{t=1}^{n}$ and $\mathfrak{n}=\{\mathfrak{n}_{t}\}_{t=1}^{n}$ are unknown intensities of the signal and noise, and $x_{t}^{\text{lat}}$ is a centered Gaussian process with the structure
\begin{equation*}
    X^{\text{lat}}=\left[
    \begin{matrix}
        (x_{1}^{\text{lat}})^{\top}\\
        \vdots\\
        (x_{n}^{\text{lat}})^{\top}
    \end{matrix}
    \right]=\left[(\Xi_{1,n}^{\text{lat}})^{1/2}z_{1},\ldots,(\Xi_{p,n}^{\text{lat}})^{1/2}z_{p}\right]\Lambda^{1/2}U^{\top},
\end{equation*}
and $\varepsilon_{t}^{\text{lat}}$ is another centered Gaussian process with the temporal covariance $\Upsilon_{n}^{\text{lat}}$.
For simplicity, we assume that $\epsilon^{\text{lat}}\le \mu_{n}(\Xi_{i,n}^{\text{lat}})\le \mu_{1}(\Xi_{1,n}^{\text{lat}})\le 1/\epsilon^{\text{lat}}$ and $\epsilon^{\text{lat}}\le \mu_{n}(\Upsilon_{n}^{\text{lat}})\le \mu_{1}(\Upsilon_{n}^{\text{lat}})\le 1/\epsilon^{\text{lat}}$ for some $\epsilon^{\text{lat}}\in(0,1]$
Note that we can extend these settings to long-memory processes as long as these matrices are coherent in the sense of Assumption \ref{asmp:coherence}.

We define an extension of the excess risk \eqref{def:risk} as follows:
\begin{equation*}
    R^{\mathfrak{s}^{\ast},\mathfrak{n}^{\ast}}(\beta)=\Ep^{\ast}\left[\left(y^{\ast}-\langle \beta, x^{\ast}\rangle \right)^{2}-\left(y^{\ast}-\langle \beta^{\ast},x^{\ast}\rangle \right)^{2}\right],
\end{equation*}
where $x^{\ast}$ and $\varepsilon^{\ast}$ are independent Gaussian random variables such that that $x^{\ast}\sim N(\mathbf{0},\Sigma)$ and $\varepsilon^{\ast}\sim N(0,\sigma_{\varepsilon}^{2})$ with $\Sigma:=U\Lambda U^{\top}$ and $\sigma_{\varepsilon}^{2}>0$, $y^{\ast}=\langle \beta, \mathfrak{s}^{\ast}x^{\ast}\rangle+\mathfrak{n}^{\ast}\varepsilon^{\ast}$,  $\mathfrak{s}^{\ast},\mathfrak{n}^{\ast}>0$, and $\Ep^{\ast}$ is the expectation with respect to $(x^{\ast},y^{\ast})$.

We let the observed covariate process $x_{t}=\mathfrak{s}_{t}x_{t}^{\text{lat}}$ and then we obtain the following corollary.

\begin{corollary}
Consider the setting in Theorem \ref{thm:upper} with covariate $\{x_t=\mathfrak{s}_{t}x_{t}^{\mathrm{lat}}\}_{t}$.
Suppose that $\Sigma=U\Lambda U^{\top}$ is a benign covariance.
Then, we have
\begin{align}
    R^{\mathfrak{s}^{\ast},\mathfrak{n}^{\ast}} = O_{P}\left(\tau_{n} + \left(\max_{t=1,\ldots,n}\frac{\mathfrak{s}_{t}^{2}}{\mathfrak{n}_{t}^{2}}\right)\eta_n \right).
\end{align}
\end{corollary}

An important consequence is that the upper and lower bounds on the excess risk given by Theorems \ref{thm:upper} and \ref{thm:lower} match if the signal-to-noise (SNR) ratios (including that of unseen data $(x^{\ast},y^{\ast})$) stay in a fixed interval.
Assumption \ref{asmp:coherence} for such observations holds for any sequence of positive numbers $\{\mathfrak{s}_{t}\}_{t=1}^{n}$.
Furthermore, Theorem 10 of \citet{tsigler2023benign} yields that the upper and lower bounds given by Theorems \ref{thm:upper} and \ref{thm:lower} coincide if for fixed $\epsilon^{\text{SNR}}\in(0,1]$, $\epsilon^{\text{SNR}}\le \min_{t=1,\ldots,n}(\mathfrak{s}_{t}/\mathfrak{n}_{t})^{2}\le \max_{t=1,\ldots,n}(\mathfrak{s}_{t}/\mathfrak{n}_{t})^{2}\le 1/\epsilon^{\text{SNR}}$.

We can extend this analysis to cases such as $\beta=\beta_{t}$ and/or $x_{t}$ with coordinate-wise time-varying structures, where ``coordinate'' means $\beta_{t}^{\top}e_{i}$ and/or $x_{t}^{\top}e_{i}$ and $e_{i}$ is a column vector of $U$.

\section{Conclusion and discussion} \label{sec:conclusion}

In this study, we investigate the excess risk of an estimator for an over-parameterized linear model with dependent time-series data. We construct an estimator using an interpolator that perfectly fits the data and measure the excess prediction risk. In addition to the concept of effective ranks of autocovariance matrices used for independent data, we develop several notions to handle temporal covariance and subsequently derive upper and lower bounds on the excess risk. This result holds in the high-dimensional regime, where \( p \gg n \), regardless of sparsity. Our analysis shows that the coherence of temporal covariance matrices affects the risk bound and the convergence rates we derive. The analysis is based on the implicit decorrelation achieved by the interpolation estimator and the control of the coherence property.

A limitation of this study is that the dependent data are assumed to be Gaussian. While assuming Gaussianity is common in dependent data analysis, several methods exist to relax this assumption. However, in our study, Gaussianity plays a crucial role in reducing data correlation to handle the bias term. In the over-parameterized setting, exploring methods to avoid the Gaussianity assumption is an important direction for future work.

An important avenue for future research is the application to more complex models, such as latent variable models. For example, \citet{bunea2022interpolating} considered a factor model in an independent data setting and showed that latent noise significantly impacts the conditions for benign overfitting. Understanding how this effect changes under dependent data is an interesting direction for further study.

\appendix

\section{Additional notation}

For the matrix $X = [x_1,...,x_n]^\top \in \R^n \otimes \R^p$ by the observations, we define its Hilbert-Schmidt norm as  $\left\|X\right\|_{\text{HS}}=\sqrt{\sum_{t=1}^{n}\left\|x_{t}\right\|^{2}}$.
For $k=2,\ldots,p-1$, we use the notation $X_{0:k}$ and $X_{k:\infty}$ for the matrix comprises of the first $k$ columns and the last $p-k$ columns respectively, and $Z_{0:k}=[z_{1}\ \cdots\ z_{k}]\in\R^{n}\otimes\R^{k}$, and $Z_{k:\infty}=[z_{k+1}\ \cdots\ z_{p}]\in\R^{n}\otimes\R^{p-k}$.
We define the following random matrices
\begin{align*}
    T_{B}&:=\left(I-X^{\top}\left(XX^{\top}\right)^{-1}X\right)\Sigma\left(I-X^{\top}\left(X X^{\top}\right)^{-1}X\right),\\
    T_{V}&:=\Upsilon_{n}^{1/2}\left(XX^{\top}\right)^{-1}X\Sigma X^{\top}\left(XX^{\top}\right)^{-1}\Upsilon_{n}^{1/2}.
\end{align*}
Let $\Tilde{\Xi}_{k,n}:=\Xi_{0,n}^{-1/2}\Xi_{k,n}\Xi_{0,n}^{-1/2}$, $\Tilde{\Upsilon}_{n}:=\Xi_{0,n}^{-1/2}\Upsilon_{n}\Xi_{0,n}^{-1/2}$, and $\Tilde{z}_{k}=[\Tilde{z}_{1,k},\ldots,\Tilde{z}_{n,k}]^{\top}$ be a random vector defined as
\begin{align*}
    \Tilde{z}_{k}&:=\Xi_{0,n}^{-1/2}\Xi_{k,n}^{1/2}z_{k}=\Xi_{0,n}^{-1/2}Xe_{k}/\sqrt{\lambda_{k}}\sim N\left(\mathbf{0},\Tilde{\Xi}_{k,n}\right).
\end{align*}
We also define some random matrices for $i,k \in \N$ satisfying $\lambda_{i}>0$ and $\lambda_{k}>0$ such that
\begin{align*}
    A=\sum_{i:\lambda_{i}>0}\lambda_{i}\Tilde{z}_{i}\Tilde{z}_{i}^{\top},\
    A_{-i}=\sum_{j\neq i:\lambda_{j}>0}\lambda_{j}\Tilde{z}_{j}\Tilde{z}_{j}^{\top},\mbox{~and~}
    A_{k}=\sum_{i>k:\lambda_{i}>0}\lambda_{i}\Tilde{z}_{i}\Tilde{z}_{i}^{\top}.
\end{align*}

\section{Proofs of main theorems}\label{sec:proofs}

We display the proof of Theorem \ref{thm:upper}.
\begin{proof}[Proof of Theorem \ref{thm:upper}]
    We first introduce a bias--variance decomposition of the excess risk, and give bounds for both the bias and variance term respectively.
    
    \textbf{(Step 1: bias--variance decomposition)} 
    By Lemmas \ref{varBLLT20LS01} and \ref{varBLLT20LS02}, with probability at least $1-e^{-t}$,
    \begin{align*}
        R(\hat{\beta})\le 2\left(\beta^{\ast}\right)^{\top}T_{B}\beta^{\ast}+2\left(4t+2\right)\trace\left(T_{V}\right),
    \end{align*}
    and with probability $1$,
    \begin{equation*}
        \Ep_{\mathcal{E}}R(\hat{\beta}){=} \left(\beta^{\ast}\right)^{\top}T_{B}\beta^{\ast}+\trace\left(T_{V}\right).
    \end{equation*}

    \textbf{(Step 2: proof of the statement (i))}
    It is obvious that with probability 1,
    \begin{equation*}
        \Ep_{\mathcal{E}}R(\hat{\beta}){=} \left(\beta^{\ast}\right)^{\top}T_{B}\beta^{\ast}+\trace\left(T_{V}\right)\ge \trace\left(T_{V}\right).
    \end{equation*}
    Lemma \ref{lem:TV_lower} gives that for some $c_{1}=c_{1}(\epsilon)>0$, for any $0\le k\le n/c$ and any $b_{1}>1$, if $r_{k}(\Sigma)< b_{1}n$, then with probability at least $1-10e^{-n/c_{1}}$, 
    \begin{equation*}
        \trace\left(T_{V}\right)\ge \mu_{n}\left(\Tilde{\Upsilon}_{n}\right)\left(k+1\right)/\left(c_{1}b_{1}^{2}n\right).
    \end{equation*}
    Since $k^{\ast}(b_{1})\ge n/c$ for this $c$ means that for any $0\le k\le n/c$, $r_{k}(\Sigma)< b_{1}n$, we obtain that by taking the maximal $k$, with probability at least $1-10e^{-n/c}$,
    \begin{equation*}
        \trace\left(T_{V}\right)\ge \frac{1}{b_{1}^{2}c_{1}^{2}\left\|\Upsilon_{n}^{-1}\Xi_{0,n}\right\|}.
    \end{equation*}
    Note that we can replace the constant $c_{1}$ with larger ones to obtain this conclusion.
    We fix $b_{1}$ later on.

    \textbf{(Step 3: proof of the statement (ii))}
    Corollary \ref{cor:TB_upper} gives that for some constants $b_{2}=b_{2}(\epsilon)\ge 1$ and $c_{2}=c_{2}(\epsilon)\ge 1$ such that for any $k\in\N$ such that $k<n/c_{2}$ and $r_{k}(\Sigma)\ge b_{2}n$, with probability at least $1-c_{2}\exp(-n/c_{2})$,
    \begin{align*}
        (\beta^{\ast})^{\top}T_{B}\beta^{\ast}&\le c_{2}\left(\left\|\beta_{k:\infty}^{\ast}\right\|_{\Sigma_{k:\infty}}^{2}+\left\|\beta_{0:k}^{\ast}\right\|_{\Sigma_{0:k}^{-1}}^{2}\left(\frac{\lambda_{k+1}r_{k}(\Sigma)}{n}\right)^{2}\right).
    \end{align*}
    Lemma \ref{lem:TV_upper} yields that there are constants $b_{3}=b_{3}\left(\epsilon\right)\ge 1$ and $c_{3}=c_{3}\left(\epsilon\right)\ge 1$ such that if $0\le k\le n/c_{3}$, $r_{k}\left(\Sigma \right)\ge b_{3}n$, then with probability at least $1-7e^{-n/c_{3}}$,
    \begin{align*}
        \trace\left(T_{V}\right)\le c_{3}\left\|\Xi_{0,n}^{-1}\Upsilon_{n}\right\|\left(\frac{k}{n}+n\frac{\sum_{i>k}\lambda_{i}^{2}}{\left(\sum_{i>k}\lambda_{i}\right)^{2}}\right).
    \end{align*}

    \textbf{(Step 4: selection of constants)} It suffices to take $c=c_{1}\vee c_{2}\vee c_{3}$ and $b=b_{1}=b_{2}\vee b_{3}$.
\end{proof}

We give the proof of Theorem \ref{thm:lower}.
\begin{proof}[Proof of Theorem \ref{thm:lower}]
    We separate the proof into those of the statements (i) and (ii).

    \textbf{(i):}
    Lemmas \ref{lem:TV_lower} and \ref{varBLLT20L11} shows that 
    there is a constant $c=c\left(\epsilon\right)>0$ such that for any $0\le k\le n/c$ and any $b>1$, if $k^{\ast}=k^{\ast}(b)<\infty$, then, with probability at least $1-10e^{-n/c}$,
        \begin{align*}
            \trace\left(T_{V}\right)\ge \frac{1}{cb^{2}\left\|\Upsilon_{n}^{-1}\Xi_{0,n}\right\|}\left(\frac{k^{\ast}}{n}+\frac{b^{2}n\sum_{i>k^{\ast}}\lambda_{i}^{2}}{\left(\lambda_{k^{\ast}+1}r_{k^{\ast}}\left(\Sigma \right)\right)^{2}}\right).
        \end{align*}
        
    \textbf{(ii):} Lemma \ref{lem:TB_lower}  combined with Lemma \ref{varBLLT20LS01} gives the conclusion.
\end{proof}

\section{Bias--variance decomposition of the excess risk}

We give the bias--variance decomposition of the excess risk shown in Lemma \ref{lem:risk_decomp}.
The following lemma is adapted from Lemma S.1 of \citet{bartlett2020benign}.
\begin{lemma}\label{varBLLT20LS01}
The excess risk of the minimum norm estimator satisfies
\begin{align*}
    R(\hat{\beta})=\Ep_{x^{\ast}}\left[\left((x^{\ast})^{\top}\left(\beta^{\ast}-\hat{\beta}\right)\right)^{2}\right]\le 2\left(\beta^{\ast}\right)^{\top}T_{B}\beta^{\ast}+2\left(\Upsilon_{n}^{-1/2}\mathcal{E}\right)^{\top}T_{V}\left(\Upsilon_{n}^{-1/2}\mathcal{E}\right)
\end{align*}
and
\begin{align*}
    \Ep_{\mathcal{E}}R(\hat{\beta}){=} \left(\beta^{\ast}\right)^{\top}T_{B}\beta^{\ast}+\trace\left(T_{V}\right).
\end{align*}
\end{lemma}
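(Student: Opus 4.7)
The plan is to carry out a standard bias--variance decomposition of the prediction error of the interpolation estimator, exploiting the closed form $\hat{\beta}=X^\top(XX^\top)^{-1}Y$ and the Gaussian independence between $x^\ast$ and $\varepsilon^\ast$.

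First I would rewrite the excess risk. Since $y^\ast=\langle \beta^\ast,x^\ast\rangle+\varepsilon^\ast$ with $\varepsilon^\ast$ centered and independent of $x^\ast$, the cross term vanishes and one obtains
\begin{align*}
 R(\hat{\beta})
 =\Ep_{x^\ast}\bigl[\bigl((x^\ast)^\top(\beta^\ast-\hat{\beta})\bigr)^2\bigr]
 =(\beta^\ast-\hat{\beta})^\top\Sigma(\beta^\ast-\hat{\beta}),
\end{align*}
using $\Ep[x^\ast (x^\ast)^\top]=\Sigma$. This establishes the first equality in the lemma.

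Next, substitute $Y=X\beta^\ast+\mathcal{E}$ into $\hat{\beta}$. Writing $M:=I-X^\top(XX^\top)^{-1}X$, a short algebraic manipulation gives the decomposition
\begin{align*}
 \beta^\ast-\hat{\beta}
 =M\beta^\ast - X^\top(XX^\top)^{-1}\mathcal{E}.
\end{align*}
Plugging this into $(\beta^\ast-\hat{\beta})^\top\Sigma(\beta^\ast-\hat{\beta})$ and applying the elementary bound $(a+b)^\top\Sigma(a+b)\le 2a^\top\Sigma a+2b^\top\Sigma b$ to the two summands yields
\begin{align*}
 R(\hat{\beta})
 \le 2(\beta^\ast)^\top M\Sigma M\beta^\ast
    +2\,\mathcal{E}^\top (XX^\top)^{-1}X\Sigma X^\top(XX^\top)^{-1}\mathcal{E}.
\end{align*}
The first term is $2(\beta^\ast)^\top T_B\beta^\ast$ by definition of $T_B$. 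For the second term, note that
$(XX^\top)^{-1}X\Sigma X^\top(XX^\top)^{-1}=\Upsilon_n^{-1/2}T_V\Upsilon_n^{-1/2}$, so the quadratic form equals $(\Upsilon_n^{-1/2}\mathcal{E})^\top T_V(\Upsilon_n^{-1/2}\mathcal{E})$, giving the desired inequality.

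Finally, for the $\Ep_{\mathcal{E}}$-identity, I would take expectation in the quadratic form $(\beta^\ast-\hat{\beta})^\top\Sigma(\beta^\ast-\hat{\beta})$ with the decomposition above. The cross term $(M\beta^\ast)^\top\Sigma\bigl(X^\top(XX^\top)^{-1}\mathcal{E}\bigr)$ has zero $\mathcal{E}$-mean, the deterministic term contributes $(\beta^\ast)^\top T_B\beta^\ast$, and the stochastic term satisfies
\begin{align*}
 \Ep_{\mathcal{E}}\bigl[\mathcal{E}^\top(XX^\top)^{-1}X\Sigma X^\top(XX^\top)^{-1}\mathcal{E}\bigr]
 =\trace\!\bigl(\Upsilon_n^{1/2}(XX^\top)^{-1}X\Sigma X^\top(XX^\top)^{-1}\Upsilon_n^{1/2}\bigr)
 =\trace(T_V),
\end{align*}
using $\Ep[\mathcal{E}\mathcal{E}^\top]=\Upsilon_n$ together with the trace cyclicity. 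Summing these contributions gives the stated identity. There is no serious obstacle here since the result is essentially algebraic; the only care required is tracking the appearance of $\Upsilon_n^{1/2}$ in the definition of $T_V$ so that the variance term is expressed as a quadratic form in the whitened noise $\Upsilon_n^{-1/2}\mathcal{E}$.
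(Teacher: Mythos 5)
Your proposal is correct and follows essentially the same route as the paper: both prove the first equality by noting $y^\ast-(x^\ast)^\top\beta^\ast=\varepsilon^\ast$ is centered given $x^\ast$, decompose $\beta^\ast-\hat{\beta}=M\beta^\ast-X^\top(XX^\top)^{-1}\mathcal{E}$ after substituting $Y=X\beta^\ast+\mathcal{E}$, apply the elementary bound $(a+b)^\top\Sigma(a+b)\le 2a^\top\Sigma a+2b^\top\Sigma b$, insert $\Upsilon_n^{-1/2}\Upsilon_n^{1/2}$ to whiten the noise in the variance term, and then take $\Ep_{\mathcal{E}}$ using that the cross term is centered and $\Ep[\mathcal{E}\mathcal{E}^\top]=\Upsilon_n$ with trace cyclicity. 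The only cosmetic difference is that you pull the $x^\ast$-expectation through to write everything as a quadratic form in $\Sigma$ before decomposing, whereas the paper applies the elementary inequality inside the $\Ep_{x^\ast}$; this is immaterial.
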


\begin{proof}[Proof of Lemma \ref{varBLLT20LS01}]
Because $y^{\ast}-(x^{\ast})^{\top}\beta^{\ast}$ is zero-mean conditionally on $x^{\ast}$,
\begin{align*}
    R(\hat{\beta})&=\Ep_{x^{\ast},y^{\ast}}\left[\left(y^{\ast}-(x^{\ast})^{\top}\hat{\beta}\right)^{2}\right]-\Ep_{x^{\ast},y^{\ast}}\left[\left(y^{\ast}-(x^{\ast})^{\top}\beta^{\ast}\right)^{2}\right]\\
    &=\Ep_{x^{\ast},y^{\ast}}\left[\left(y^{\ast}-(x^{\ast})^{\top}\beta^{\ast}+(x^{\ast})^{\top}\left(\beta^{\ast}-\hat{\beta}\right)\right)^{2}\right]-\Ep_{x^{\ast},y^{\ast}}\left[\left(y^{\ast}-(x^{\ast})^{\top}\beta^{\ast}\right)^{2}\right]\\
    &=\Ep_{x^{\ast}}\left[\left((x^{\ast})^{\top}\left(\beta^{\ast}-\hat{\beta}\right)\right)^{2}\right].
\end{align*}
The equality \eqref{eq:betahat}, the definition of $\Sigma$ and $Y=X\beta^{\ast}+\mathcal{E}$ lead to
\begin{align*}
    R(\hat{\beta})&=\Ep_{x^{\ast}}\left[\left((x^{\ast})^{\top}\left(\beta^{\ast}-X\left(XX^{\top}\right)^{-1}\left(X\beta^{\ast}+\mathcal{E}\right)\right)\right)^{2}\right]\\
    &=\Ep_{x^{\ast}}\left[\left((x^{\ast})^{\top}\left(I-X\left(XX^{\top}\right)^{-1}X\right)\beta^{\ast}-(x^{\ast})^{\top}X\left(XX^{\top}\right)^{-1}\mathcal{E}\right)^{2}\right]\\
    &\le 2\Ep_{x^{\ast}}\left[\left((x^{\ast})^{\top}\left(I-X\left(XX^{\top}\right)^{-1}X\right)\beta^{\ast}\right)^{2}\right]+2\Ep_{x^{\ast}}\left[\left((x^{\ast})^{\top}X\left(XX^{\top}\right)^{-1}\mathcal{E}\right)^{2}\right]\\
    &= 2\left(\beta^{\ast}\right)^{\top}\left(I-X^{\top}\left(XX^{\top}\right)^{-1}X\right)\Sigma\left(I-X^{\top}\left(XX^{\top}\right)^{-1}X\right)\beta^{\ast}\\
    &\quad+2\mathcal{E}^{\top}\left(XX^{\top}\right)^{-1}X\Sigma X^{\top}\left(XX^{\top}\right)^{-1}\mathcal{E}\\
    &=2\left(\beta^{\ast}\right)^{\top}T_{B}\beta^{\ast}+2\mathcal{E}^{\top}\Upsilon_{n}^{-1/2}\Upsilon_{n}^{1/2}\left(XX^{\top}\right)^{-1}X\Sigma X^{\top}\left(XX^{\top}\right)^{-1}\Upsilon_{n}^{1/2}\Upsilon_{n}^{-1/2}\mathcal{E}\\
    &=2\left(\beta^{\ast}\right)^{\top}T_{B}\beta^{\ast}+2\left(\Upsilon_{n}^{-1/2}\mathcal{E}\right)^{\top}T_{V}\left(\Upsilon_{n}^{-1/2}\mathcal{E}\right).
\end{align*}
Because $\mathcal{E}$ is centered and independent of $x^{\ast}$ and {$X$},
\begin{align*}
    \Ep_{\mathcal{E}}R(\hat{\beta})&=\Ep_{x^{\ast},\mathcal{E}}\left[\left((x^{\ast})^{\top}\left(I-X\left(XX^{\top}\right)^{-1}X\right)\beta^{\ast}-(x^{\ast})^{\top}X\left(XX^{\top}\right)^{-1}\mathcal{E}\right)^{2}\right]\\
    &=\left(\beta^{\ast}\right)^{\top}\left(I-X^{\top}\left(XX^{\top}\right)^{-1}X\right)\Sigma \left(I-X^{\top}\left(X X^{\top}\right)^{\dagger}X\right)\beta^{\ast}\\
    &\quad+\trace\left(\left(XX^{\top}\right)^{-1}X\Sigma X^{\top}\left(XX^{\top}\right)^{-1}\Ep\left[\mathcal{E}\mathcal{E}^{\top}\right]\right)\\
    &= \left(\beta^{\ast}\right)^{\top}T_{B}\beta^{\ast} + \trace\left(\Upsilon_{n}^{1/2}\left(XX^{\top}\right)^{-1}X\Sigma X^{\top}\left(XX^{\top}\right)^{-1}\Upsilon_{n}^{1/2}\right)\\
    &= \left(\beta^{\ast}\right)^{\top}T_{B}\beta^{\ast} + \trace\left(T_{V}\right).
\end{align*}
Hence we obtain the equality.
\end{proof}

To bound the tail probability of the term $T_V$, we give a corollary of Lemma S.2 of \citet{bartlett2020benign}.
\begin{lemma}\label{varBLLT20LS02}
For all $M$, $n\times n$-dimensional a.s.\ positive semi-definite random matrices, with probability at least $1-e^{-t}$, we have
\begin{align*}
    \left(\Upsilon_{n}^{-1/2}\mathcal{E}\right)^{\top}M\Upsilon_{n}^{-1/2}\mathcal{E}\le \trace\left(M\right)+2\left\|M\right\|t+2\sqrt{\left\|M\right\|^{2}t^{2}+\trace\left(M^{2}\right)t}.
\end{align*}
\end{lemma}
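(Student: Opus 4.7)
The plan is to reduce the bound to a standard Bernstein-type concentration for Gaussian quadratic forms. Since $\mathcal{E}\sim N(\mathbf{0},\Upsilon_{n})$ by the noise specification, the vector $V:=\Upsilon_{n}^{-1/2}\mathcal{E}$ is a standard Gaussian in $\R^{n}$. In the intended use of this lemma (with $M=T_{V}$ depending only on the design matrix $X$), $M$ is independent of $\mathcal{E}$; I would therefore prove the inequality conditionally on $M$ and then integrate. Conditionally on $M$, diagonalise $M=\sum_{i=1}^{n}\mu_{i}u_{i}u_{i}^{\top}$ with $\mu_{i}\ge 0$, so that with $g_{i}:=u_{i}^{\top}V$, the $g_{i}$ are i.i.d.\ $N(0,1)$ and $V^{\top}MV=\sum_{i}\mu_{i}g_{i}^{2}$.

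The moment generating function of the centered chi-square mixture is
\begin{align*}
    \log\Ep\exp\!\bigl(\lambda(V^{\top}MV-\trace(M))\bigr)=\sum_{i=1}^{n}\left(-\tfrac{1}{2}\log(1-2\lambda\mu_{i})-\lambda\mu_{i}\right),
\end{align*}
finite for $\lambda\in(0,1/(2\|M\|))$. Applying the elementary inequality $-\log(1-x)-x\le x^{2}/(2(1-x))$ for $x\in[0,1)$ with $x=2\lambda\mu_{i}$, and then bounding $\mu_{i}\le\|M\|$ in the remaining denominator, yields the Bernstein-form estimate
\begin{align*}
    \log\Ep\exp\!\bigl(\lambda(V^{\top}MV-\trace(M))\bigr)\le \frac{\lambda^{2}\cdot 2\trace(M^{2})}{2(1-2\|M\|\lambda)}.
\end{align*}

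The Chernoff bound with the specific choice $\lambda=s/(2\trace(M^{2})+2\|M\|s)\in(0,1/(2\|M\|))$ (the standard Bernstein optimiser) then gives
\begin{align*}
    \Pr\bigl[V^{\top}MV-\trace(M)\ge s\bigr]\le \exp\!\left(-\frac{s^{2}}{4(\trace(M^{2})+\|M\|s)}\right).
\end{align*}
Equating the right-hand side to $e^{-t}$ produces the quadratic equation $s^{2}-4\|M\|ts-4\trace(M^{2})t=0$, whose positive root is precisely $s=2\|M\|t+2\sqrt{\|M\|^{2}t^{2}+\trace(M^{2})t}$, matching the claimed bound. Since the conditional probability bound holds for every realisation of $M$, the same unconditional bound follows by the tower property.

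There is no substantive obstacle: the result is a textbook Gaussian chaos concentration. The only delicate points are (i) ensuring the MGF is finite by restricting $\lambda<1/(2\|M\|)$, handled automatically by the termwise elementary inequality, and (ii) matching the Bernstein exponent to the stated quadratic form, which works out exactly. As a cross-check, one could instead invoke Laurent--Massart, $\sum_{i}\mu_{i}(g_{i}^{2}-1)\le 2\sqrt{\trace(M^{2})t}+2\|M\|t$ with probability $\ge 1-e^{-t}$, and note that $2\sqrt{\trace(M^{2})t}\le 2\sqrt{\|M\|^{2}t^{2}+\trace(M^{2})t}$, so the Laurent--Massart bound implies the stated inequality.
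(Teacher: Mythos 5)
Your proof is correct and follows the same reduction the paper uses: whiten the noise to observe $\Upsilon_n^{-1/2}\mathcal{E}\sim N(0,I_n)$, condition on $M$ using independence of $\mathcal{E}$ from $X$, and apply concentration of a Gaussian quadratic form. The only difference is that the paper simply cites Lemma S.2 of Bartlett et al.\ (2020) for that concentration step, whereas you re-derive it from scratch via the standard Bernstein MGF bound (equivalently Laurent--Massart); the algebra in your Chernoff optimization and the recovery of the positive root $s=2\|M\|t+2\sqrt{\|M\|^2t^2+\trace(M^2)t}$ both check out.
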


\begin{proof}[Proof of Lemma \ref{varBLLT20LS02}]
It follows from Lemma S.2 of \citet{bartlett2020benign} and the facts such that $\mathcal{E}$ is independent of $X$ and $\Upsilon_{n}^{-1/2}\mathcal{E}\sim N\left(0,I_{n}\right)$.
\end{proof}

As \citet{bartlett2020benign}, we see that with probability at least $1-e^{-t}$,
\begin{align*}
    \left(\Upsilon_{n}^{-1/2}\mathcal{E}\right)^{\top}T_{V}\left(\Upsilon_{n}^{-1/2}\mathcal{E}\right)\le 
    \left(4t+2\right)\trace\left(T_{V}\right).
\end{align*}

By combining these results, we obtain Lemma \ref{lem:risk_decomp}.

\section{Preliminary results for random matrices}

The following lemma is a variation of Corollary 1 of \citet{bartlett2020benign}.
\begin{lemma}\label{varBLLT20C01}
There is a universal constant $a>0$ such that for any mean-zero Gaussian random variable $\Tilde{z}\in\R^{n}$ with the covariance matrix {$\Xi$}, any random subspace $\mathscr{L}$ of $\R^{n}$ of codimension $d$ that is independent of $z$, and any $u>0$, with probability at least $1-3e^{-u}$, we have
\begin{align*}
    \left\|\Tilde{z}\right\|^{2}&\le \trace\left(\Xi\right)+a\mu_{1}\left(\Xi\right)\left(u+\sqrt{nu}\right),\\
    \left\|\Pi_{\mathscr{L}}\Tilde{z}\right\|^{2}&\ge \trace\left(\Xi\right)-a\mu_{1}\left(\Xi\right)\left(d+u+\sqrt{nu}\right),
\end{align*}
where $\Pi_{\mathscr{L}}$ is the orthogonal projection on $\mathscr{L}$.
\end{lemma}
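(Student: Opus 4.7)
The plan is to reduce $\tilde z$ to a standard Gaussian by writing $\tilde z = \Xi^{1/2}w$ with $w \sim N(\mathbf{0}, I_n)$, so that every quantity of interest becomes a Gaussian quadratic form $w^\top A w$ which is amenable to standard concentration tools. The single workhorse is a Hanson--Wright / Laurent--Massart inequality: for any PSD matrix $A$ and $w\sim N(\mathbf{0},I_n)$,
\begin{equation*}
\Pr\!\left(\,|w^\top A w-\trace(A)|\ge C\bigl(\|A\|_{\mathrm{HS}}\sqrt{u}+\|A\|u\bigr)\right)\le 2e^{-u}.
\end{equation*}
All three inequalities needed below will follow by instantiating this with different $A$ and bounding $\|A\|$ and $\|A\|_{\mathrm{HS}}$ by quantities involving $\mu_1(\Xi)$, $n$, and $d$.

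\emph{Step 1: the upper bound on $\|\tilde z\|^2$.} Take $A=\Xi$. Then $\|A\|=\mu_1(\Xi)$ and $\|A\|_{\mathrm{HS}}\le \sqrt{n}\,\mu_1(\Xi)$, since $\Xi$ has at most $n$ eigenvalues each bounded by $\mu_1(\Xi)$. Hanson--Wright yields, with probability at least $1-e^{-u}$,
\begin{equation*}
\|\tilde z\|^2=w^\top \Xi w\le \trace(\Xi)+C\,\mu_1(\Xi)\bigl(\sqrt{nu}+u\bigr),
\end{equation*}
which is exactly the first claim for a universal $a>0$. This takes care of one ``bad event''.

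\emph{Step 2: the lower bound on $\|\Pi_{\mathscr{L}}\tilde z\|^2$ via a complement decomposition.} Use $\|\Pi_{\mathscr L}\tilde z\|^2=\|\tilde z\|^2-\|(I-\Pi_{\mathscr L})\tilde z\|^2$. The first piece is controlled by the two-sided version of Step~1, giving $\|\tilde z\|^2\ge \trace(\Xi)-C\mu_1(\Xi)(\sqrt{nu}+u)$ with probability $\ge 1-e^{-u}$ (second bad event). For the second piece, condition on $\mathscr L$, which is legitimate by the stated independence of $\mathscr L$ and $\tilde z$. Conditionally, apply Hanson--Wright to
\begin{equation*}
\|(I-\Pi_{\mathscr L})\tilde z\|^2=w^\top B w,\qquad B:=\Xi^{1/2}(I-\Pi_{\mathscr L})\Xi^{1/2}.
\end{equation*}
Because $I-\Pi_{\mathscr L}$ is an orthogonal projection of rank $d$, $B$ has rank at most $d$, so $\|B\|\le\mu_1(\Xi)$ and $\|B\|_{\mathrm{HS}}\le \sqrt{d}\,\mu_1(\Xi)$, and $\trace(B)=\trace((I-\Pi_{\mathscr L})\Xi)\le d\,\mu_1(\Xi)$. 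Hanson--Wright therefore gives, with conditional probability $\ge 1-e^{-u}$,
\begin{equation*}
\|(I-\Pi_{\mathscr L})\tilde z\|^2\le d\,\mu_1(\Xi)+C\,\mu_1(\Xi)\bigl(\sqrt{du}+u\bigr)\le C'\mu_1(\Xi)(d+u),
\end{equation*}
using $\sqrt{du}\le (d+u)/2$. Since the conditional bound does not depend on the realization of $\mathscr L$, it transfers to an unconditional bound (third bad event).

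\emph{Step 3: union bound and conclusion.} Taking a union bound over the at most three ``bad'' events in Steps~1 and~2 gives the stated $1-3e^{-u}$ probability. Combining the two pieces,
\begin{equation*}
\|\Pi_{\mathscr L}\tilde z\|^2\ge \trace(\Xi)-C\mu_1(\Xi)(\sqrt{nu}+u)-C'\mu_1(\Xi)(d+u)\ge \trace(\Xi)-a\,\mu_1(\Xi)(d+u+\sqrt{nu})
\end{equation*}
for a suitable universal $a$, which is the second claim.

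The only genuinely non-routine point is handling the randomness of $\mathscr L$ in the projection inequality; this is resolved by conditioning on $\mathscr L$ (permitted by the independence assumption), noting that the Hanson--Wright bound depends on $\mathscr L$ only through the rank and operator norm of $I-\Pi_{\mathscr L}$, both of which are deterministic. Everything else is bookkeeping of the quadratic-form constants and a union bound.
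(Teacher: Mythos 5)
Your proposal is correct and follows essentially the same route as the paper's proof: reduce $\tilde z=\Xi^{1/2}w$ to standard Gaussian $w$, apply a Hanson--Wright / Laurent--Massart quadratic-form concentration to $w^\top \Xi w$ for the first bound, use the complement decomposition $\|\Pi_{\mathscr L}\tilde z\|^2=\|\tilde z\|^2-\|\Pi_{\mathscr L^\perp}\tilde z\|^2$, and bound the latter piece by the same concentration applied to $B=\Xi^{1/2}(I-\Pi_{\mathscr L})\Xi^{1/2}$ (the paper writes $M=\Pi_{\mathscr L^\perp}^\top\Pi_{\mathscr L^\perp}$ and uses the trace bounds $\trace(\Xi M)\le d\mu_1(\Xi)$, $\trace((\Xi M)^2)\le d\mu_1(\Xi)^2$, which are exactly your $\trace(B)$ and $\|B\|_{\mathrm{HS}}^2$ bounds). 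The only cosmetic difference is that the paper invokes a two-sided Hanson--Wright (probability $1-2e^{-u}$) once plus a one-sided bound once, while you split it into three one-sided events; both bookkeepings yield $1-3e^{-u}$, and your explicit conditioning on $\mathscr L$ justifies the same step the paper treats implicitly.
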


\begin{proof}[Proof of Lemma \ref{varBLLT20C01}]
We use the notation $z:=\Xi^{-1/2}\Tilde{z}$, whose distribution is $N\left(\mathbf{0},I_{n}\right)$.
Theorem 1.1 of \citet{RV2013Hanson} verifies that there is a universal constant $c>0$ such that at least probability $1-2e^{-u}$,
\begin{align*}
    \left|z^{\top}\Xi z-\trace\left(\Xi\right)\right|\le c\left(\mu_{1}\left(\Xi\right)u+\left\|\Xi\right\|_{\mathrm{HS}}\sqrt{u}\right)\le c\mu_{1}\left(\Xi\right)\left(u+\sqrt{nu}\right).
\end{align*}
We can easily obtain the first inequality of the statement, because we have
\begin{align*}
    \left\|\Tilde{z}\right\|^{2}= z^{\top}\Xi z\le \trace\left(\Xi\right)+c\mu_{1}\left(\Xi\right)\left(u+\sqrt{nu}\right).
\end{align*}
With respect to the second inequality of the statement, we have the decomposition as \citet{bartlett2020benign}:
\begin{align*}
    \left\|\Pi_{\mathscr{L}}\Tilde{z}\right\|^{2}=\left\|\Tilde{z}\right\|^{2}-\left\|\Pi_{\mathscr{L}^{\perp}}\Tilde{z}\right\|^{2}.
\end{align*}
Note that
\begin{align*}
    \left\|\Tilde{z}\right\|^{2}=z^{\top}\Xi z\ge \trace\left(\Xi\right)-c\mu_{1}\left(\Xi\right)\left(u+\sqrt{nu}\right).
\end{align*}
Using the transpose $\Pi_{\mathscr{L}^{\perp}}^{\top}$ of the projection $\Pi_{\mathscr{L}^{\perp}}$, we define $M:=\Pi_{\mathscr{L}^{\perp}}^{\top}\Pi_{\mathscr{L}^{\perp}}$, $\left\|M\right\|=1$ and $\trace\left(M\right)=\trace\left(M^{2}\right)=d$.
Then, Lemma S.2 of \citet{bartlett2020benign} leads to the bound that at least probability $1-e^{-u}$,
\begin{align*}
    \left\|\Pi_{\mathscr{L}^{\perp}}\Tilde{z}\right\|^{2}&\le z^{\top}\Xi^{1/2}M\Xi^{1/2}z\\
    &\le \trace\left(\Xi M\right)+2\left\|\Xi^{1/2}M\Xi^{1/2}\right\|u+2\sqrt{\left\|\Xi^{1/2}M\Xi^{1/2}\right\|^{2}u^{2}+\trace\left(\left(\Xi M\right)^{2}\right)u}\\
    &\le \mu_{1}\left(\Xi\right)\trace\left(M\right)+2\mu_{1}\left(\Xi\right)u+2\sqrt{\mu_{1}^{2}\left(\Xi\right)u^{2}+\mu_{1}^{2}\left(\Xi\right)\trace\left(M^{2}\right)u}\\
    &\le \mu_{1}\left(\Xi\right)d+2\mu_{1}\left(\Xi\right)u+2\sqrt{\mu_{1}^{2}\left(\Xi\right)u^{2}+ \mu_{1}^{2}\left(\Xi\right)du}\\
    &= \mu_{1}\left(\Xi\right)\left(d+2u+2\sqrt{u\left(d+u\right)}\right)\\
    &\le \mu_{1}\left(\Xi\right)\left(2d+4u\right)
\end{align*}
by some trace inequalities (e.g., immediately obtained by von Neumann trace inequalities)
\begin{align*}
    \trace\left(\Xi M\right)
    &\le\mu_{1}\left(\Xi\right)\trace\left(M\right),\\
    \trace\left(\Xi M\Xi M\right)
    &\le \mu_{1}\left(\Xi\right)\trace\left(M\Xi M\right)
    =\mu_{1}\left(\Xi\right)\trace\left(\Xi M^{2}\right)
    \le \mu_{1}^{2}\left(\Xi\right)\trace\left(M^{2}\right).
\end{align*}
Hence
\begin{align*}
    \left\|\Pi_{\mathscr{L}}\Tilde{z}\right\|^{2}&\ge \trace\left(\Xi\right)-c\mu_{1}\left(\Xi\right)\left(u+\sqrt{nu}\right)-\mu_{1}\left(\Xi\right)\left(2d+4u\right)\\
    &\ge \trace\left(\Xi\right)-\mu_{1}\left(\Xi\right)\left(c\left(u+\sqrt{nu}\right)+2d+4u\right).
\end{align*}
We obtain the second inequality.
\end{proof}

\begin{lemma}\label{lem:bound_eigen}
Under Assumption \ref{asmp:coherence}, there is a universal constant $c>0$ such that with probability at least $1-2e^{-n/c}$, it holds that
\begin{align}
    &\frac{\epsilon}{c}\sum_{i}\lambda_{i}-c\epsilon^{-2}\lambda_{1}n\le \mu_{n}\left(A\right)\le\mu_{1}\left(A\right)\le c\left(\epsilon^{-1}\sum_{i}\lambda_{i}+\epsilon^{-2}\lambda_{1}n\right).
\end{align}
\end{lemma}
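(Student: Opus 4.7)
The plan is to reduce both eigenvalue bounds to a single operator-norm concentration for $A$ about its mean $M := \Ep[A] = \sum_i \lambda_i \tilde{\Xi}_{i,n}$. Writing $S := \sum_i \lambda_i$, Assumption~\ref{asmp:TV_Hilbert} gives the deterministic sandwich $\epsilon S\cdot I_n \preceq M \preceq \epsilon^{-1}S\cdot I_n$, so $\mu_n(M)\ge \epsilon S$ and $\mu_1(M)\le \epsilon^{-1}S$. By Weyl's inequality $|\mu_j(A)-\mu_j(M)|\le \|A-M\|$, it suffices to show that $\|A-M\|$ is of order at most $\epsilon^{-2}\lambda_1 n$, up to a piece absorbable into a constant fraction of $\epsilon S$, with probability at least $1-2e^{-n/c}$.

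For the pointwise step, fix a unit vector $v\in\R^n$. Using independence of $\tilde{z}_1,\ldots,\tilde{z}_p$, decompose $\tilde{z}_i^\top v = \sigma_i(v)g_i$ with $\sigma_i^2(v):=v^\top \tilde{\Xi}_{i,n}v \in[\epsilon,\epsilon^{-1}]$ and $g_i \stackrel{\text{iid}}{\sim} N(0,1)$. Then
\[
v^\top(A-M)v \;=\; \sum_i \lambda_i \sigma_i^2(v)\bigl(g_i^2 - 1\bigr)
\]
is a centered sum of independent sub-exponentials with variance proxy $\sum_i (\lambda_i\sigma_i^2(v))^2 \le \epsilon^{-2}\lambda_1 S$ and Orlicz scale $\max_i \lambda_i\sigma_i^2(v)\le \epsilon^{-1}\lambda_1$. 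Bernstein's inequality yields, for any $u>0$,
\[
\bigl|v^\top(A-M)v\bigr| \;\le\; c_0\bigl(\epsilon^{-1}\sqrt{\lambda_1 S\,u} \;+\; \epsilon^{-1}\lambda_1 u\bigr)
\]
with probability at least $1-2e^{-u}$.

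To upgrade to an operator-norm bound, take a $1/4$-net $\mathcal{N}$ of the unit sphere in $\R^n$ with $|\mathcal{N}|\le 9^n$ (standard volumetric estimate), so that by symmetry of $A-M$ one has $\|A-M\|\le 2\max_{v\in\mathcal{N}}|v^\top(A-M)v|$. Union-bounding with $u=c_1 n$ for $c_1>\log 9+1$ gives
\[
\|A-M\| \;\le\; c_2\bigl(\epsilon^{-1}\sqrt{\lambda_1 S n} + \epsilon^{-1}\lambda_1 n\bigr)
\]
with probability at least $1-2e^{-n/c}$. Combined with Weyl, the upper bound follows immediately via the unweighted AM--GM $\sqrt{\lambda_1 S n}\le (S+\lambda_1 n)/2$ and $\epsilon\le 1$, producing $\mu_1(A)\le c(\epsilon^{-1}S+\epsilon^{-2}\lambda_1 n)$. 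For the lower bound, a weighted Young-type split $\sqrt{\lambda_1 S n}\le \alpha S + \lambda_1 n/(4\alpha)$, with $\alpha$ chosen as a small constant multiple of $\epsilon^2$, arranges that $c_2\epsilon^{-1}\cdot \alpha\le \epsilon/2$ so the $S$-proportional remainder is swallowed by half of $\mu_n(M)\ge \epsilon S$; what is left gives $\mu_n(A)\ge (\epsilon/c)S - c\,\epsilon^{-2}\lambda_1 n$.

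The main obstacle is tracking the $\epsilon$-dependence carefully through Steps 1 and 3: the naive AM--GM choice produces an extra factor of $\epsilon^{-1}$ in the $\lambda_1 n$ remainder, and the weight in Young's inequality has to be balanced against the Bernstein constants and the net-derived $c_2$ so that the stated $\epsilon^{-2}$ scale is attained. A minor but essential point is verifying that the scalars $g_i := \tilde{z}_i^\top v/\sigma_i(v)$ are genuinely independent standard Gaussians, which is inherited from the independence of the $\tilde{z}_i$ and does not require joint independence of their coordinates.
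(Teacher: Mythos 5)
Your overall strategy matches the paper's: concentrate $A$ around its mean $M=\sum_i\lambda_i\tilde{\Xi}_{i,n}$ by a Bernstein bound on the quadratic form $v^{\top}(A-M)v$ at fixed $v$, upgrade to an operator-norm bound via a $1/4$-net with $|\mathcal{N}|\le 9^n$ and a union bound, and then transfer to $\mu_1(A)$ and $\mu_n(A)$. (Your use of Weyl's inequality is the same as the paper's explicit inequalities $\|A-M\|\ge\max_v v^{\top}Av-\epsilon^{-1}\sum_i\lambda_i$ and $\|A-M\|\ge\epsilon\sum_i\lambda_i-\min_v v^{\top}Av$, just phrased differently.) One small improvement in your version: by writing $\tilde z_i^{\top}v=\sigma_i(v)g_i$ and applying Bernstein directly to the weighted $\chi^2$ sum, you get a prefactor $\epsilon^{-1}$ on the deviation, whereas the paper's sub-Gaussian-to-sub-exponential chain produces $\epsilon^{-2}$ (and in fact the paper's displayed bound seems to have a typo — the term $\sqrt{t\sum_i\lambda_i}$ should be $\sqrt{t\lambda_1\sum_i\lambda_i}$ to be dimensionally consistent with the usual Bernstein variance proxy).

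However, your last step contains an arithmetic slip. You claim that choosing $\alpha$ a small multiple of $\epsilon^2$ in the weighted Young split $\sqrt{\lambda_1 Sn}\le\alpha S+\lambda_1 n/(4\alpha)$ produces the stated remainder $c\,\epsilon^{-2}\lambda_1 n$. Carrying the factors through: the condition $c_2\epsilon^{-1}\alpha\le\epsilon/2$ forces $\alpha\le\epsilon^2/(2c_2)$, and then $c_2\epsilon^{-1}\cdot\lambda_1 n/(4\alpha)\ge c_2^2\epsilon^{-3}\lambda_1 n/2$, so the method as written actually yields a remainder of order $\epsilon^{-3}\lambda_1 n$, not $\epsilon^{-2}\lambda_1 n$. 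This is not a fixable typo inside your argument — the product of the two Young weights is pinned at $c_2^2\epsilon^{-2}/4$, so shrinking the $S$-coefficient to $O(\epsilon)$ necessarily inflates the $\lambda_1 n$ coefficient to $O(\epsilon^{-3})$. Thus your proof does not establish the lemma with the stated $\epsilon^{-2}$ scale. To be fair, the paper's own "remaining discussion is parallel to [BLLT]" step runs into the same issue (and with its $\epsilon^{-2}$ Bernstein prefactor would give $\epsilon^{-5}$), and the downstream Corollary absorbs all $\epsilon$-dependence into $c(\epsilon)$, so nothing breaks later — but if you present the lemma with the exponent $\epsilon^{-2}$, you should either flag that your argument only gives $\epsilon^{-3}$ and that the precise exponent is inconsequential, or supply a sharper argument.
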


\begin{proof}[Proof of Lemma \ref{lem:bound_eigen}]
For any $v\in\R^{n}$ and $k\in\N:\lambda_{k}>0$, $v^{\top}\Tilde{z}_{k}$ is clearly $c_{0}\epsilon^{-1}\left\|v\right\|^{2}$-sub-Gaussian with a universal constant $c_{0}$.
Because for all fixed unit vector $v\in\R^{n}$, $\left(v^{\top}z_{k}\right)^{2}-v^{\top}\Xi_{k}v$ is a centered $c_{1}\epsilon^{-1}$-sub-exponential random variable with a universal constant $c_{1}>0$, Proposition 2.5.2 and Lemma 2.7.6 of \citet{Vershynin2018High} yield that there exists a universal constant $c_{2}=c_{2}\left(c_{1}\right)>0$ such that for any fixed unit vector $v\in\R^{n}$, with probability at least $1-2e^{-t}$,
\begin{align*}
    \left|v^{\top}Av-v^{\top}\left(\sum_{i}\lambda_{i}\Tilde{\Xi}_{i,n}\right)v\right|
    &\le c_{2}\epsilon^{-1}\max\left(\sup_{k:\lambda_{k}>0}\left(v^{\top}\Tilde{\Xi}_{k,n}v\right)\lambda_{k}t,\sqrt{t\sum_{i}\lambda_{i}\left(v^{\top}\Tilde{\Xi}_{i,n}v\right)}\right)\\
    &\le c_{2}\epsilon^{-1}\max\left(\sup_{k:\lambda_{k}>0}\left(v^{\top}\Tilde{\Xi}_{k,n}v\right)\lambda_{1}t,\sqrt{\sup_{k:\lambda_{k}>0}\left(v^{\top}\Tilde{\Xi}_{k,n}v\right)t\sum_{i}\lambda_{i}}\right)\\
    &\le c_{2}\epsilon^{-1}\max\left(t\epsilon^{-1}\lambda_{1},\sqrt{t\epsilon^{-1}\sum_{i}\lambda_{i}}\right)\\
    &\le c_{2}\epsilon^{-2}\max\left(t\lambda_{1},\sqrt{t\sum_{i}\lambda_{i}}\right).
\end{align*}
\citep[see also as Lemma S.9 of][]{bartlett2020benign}.
Let $\mathcal{N}$ be a $1/4$-net on $\mathcal{S}^{n-1}:=\left\{v\in\R^{n}:\left\|v\right\|=1\right\}$ such that $\left|\mathcal{N}\right|\le 9^{n}$.
The union bound argument leads to that with probability at least $1-2e^{-t}$, for all $v\in\mathcal{N}$,
\begin{align*}
    \left|v^{\top}Av-v^{\top}\left(\sum_{i}\lambda_{i}\Tilde{\Xi}_{i,n}\right)v\right|&\le c_{2}\epsilon^{-2}\max\left(\left(t+n\log9\right)\lambda_{1},\sqrt{\left(t+n\log9\right)\sum_{i}\lambda_{i}}\right).
\end{align*}
By Lemma S.8 of \citet{bartlett2020benign}, there exists a universal constant $c_{3}=c_{3}\left(c_{2}\right)>0$ such that with probability at least $1-2e^{-t}$,
\begin{align*}
    \left\|A-\sum_{i}\lambda_{i}\Tilde{\Xi}_{i,n}\right\|\le c_{3}\epsilon^{-2}\left(\left(t+n\log9\right)\lambda_{1}+\sqrt{\left(t+n\log9\right)\sum_{i}\lambda_{i}}\right).
\end{align*}
Note that
\begin{align*}
    \left\|A-\sum_{i}\lambda_{i}\Xi_{i,n}\right\|=\max_{v\in\mathcal{S}^{n-1}}\left|v^{\top}\left(A-\sum_{i}\lambda_{i}\Xi_{i,n}\right)v\right|\ge \max_{v\in\mathcal{S}^{n-1}}v^{\top}Av-\epsilon^{-1}\sum_{i}\lambda_{i},
\end{align*}
and 
\begin{align*}
    \left\|A-\sum_{i}\lambda_{i}\Xi_{i,n}\right\|=\max_{v\in\mathcal{S}^{n-1}}\left|v^{\top}\left(\sum_{i}\lambda_{i}\Xi_{i,n}-A\right)v\right|\ge \epsilon\sum_{i}\lambda_{i}-\min_{v\in\mathcal{S}^{n-1}}v^{\top}Av.
\end{align*}
The remaining discussion is parallel to \citet{bartlett2020benign}.
\end{proof}

The following corollary is a version of Lemma 4 of \citet{bartlett2020benign}.
\begin{corollary}\label{varBLLT20L04}
Under Assumption \ref{asmp:coherence}, there is a constant $c=c\left(\epsilon\right)>0$ such that for any $k\ge 0$, with probability at least $1-2e^{-n/c}$, it holds that
\begin{align*}
    \frac{1}{c}\sum_{i>k}\lambda_{i}-c\lambda_{1}n\le \mu_{n}\left(A_{k}\right)\le\mu_{1}\left(A_{k}\right)\le c\left(\sum_{i>k}\lambda_{i}+\lambda_{1}n\right).
\end{align*}
\end{corollary}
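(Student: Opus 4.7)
The plan is to reduce Corollary \ref{varBLLT20L04} to Lemma \ref{lem:bound_eigen} by observing that $A_k=\sum_{i>k:\lambda_i>0}\lambda_i\tilde z_i\tilde z_i^\top$ has exactly the structural form of $A$ in Lemma \ref{lem:bound_eigen}, but applied to the shifted eigenvalue sequence $(\lambda_{k+1},\lambda_{k+2},\ldots)$ together with the corresponding independent Gaussian vectors $\{\tilde z_i\}_{i>k}$. These Gaussians still satisfy the $\epsilon$-neighboring hypothesis of Assumption \ref{asmp:TV_Hilbert}, since that assumption is uniform in the index $i$.

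First, I would invoke Lemma \ref{lem:bound_eigen} with the sequence $(\lambda_{k+1},\lambda_{k+2},\ldots)$ in place of $(\lambda_1,\lambda_2,\ldots)$. Because the largest element of the shifted sequence is $\lambda_{k+1}$ and its sum is $\sum_{i>k}\lambda_i$, I obtain a universal constant $c_0>0$ such that with probability at least $1-2e^{-n/c_0}$,
\begin{align*}
\frac{\epsilon}{c_0}\sum_{i>k}\lambda_i - c_0\epsilon^{-2}\lambda_{k+1}n \;\le\; \mu_n(A_k)\;\le\;\mu_1(A_k)\;\le\; c_0\bigl(\epsilon^{-1}\sum_{i>k}\lambda_i + \epsilon^{-2}\lambda_{k+1}n\bigr).
\end{align*}
Next, I would simply upper-bound $\lambda_{k+1}\le\lambda_1$ in both the lower and upper bounds. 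In the lower bound this only makes the negative term more negative, so the inequality is preserved in the weaker direction required by the statement; in the upper bound it directly yields the claimed form. Absorbing the $\epsilon$-dependent factors into a single constant $c=c(\epsilon)\ge\max(c_0/\epsilon,\,c_0\epsilon^{-2})$ gives
\begin{align*}
\frac{1}{c}\sum_{i>k}\lambda_i - c\lambda_1 n\;\le\;\mu_n(A_k)\;\le\;\mu_1(A_k)\;\le\; c\bigl(\sum_{i>k}\lambda_i + \lambda_1 n\bigr),
\end{align*}
which is exactly the conclusion of the corollary.

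There is essentially no genuine obstacle here: the only point requiring care is to check that the hypothesis of Lemma \ref{lem:bound_eigen} applies verbatim to the shifted sequence. This is immediate because Assumption \ref{asmp:TV_Hilbert} is uniform in $i$, so the family $\{\tilde\Xi_{i,n}\}_{i>k}$ inherits the $\epsilon$-neighboring property used in the proof of that lemma (bounds on sub-Gaussian and sub-exponential norms, trace and operator norm estimates). Consequently, the covering/net argument in Lemma \ref{lem:bound_eigen} goes through unchanged for $A_k$, and the corollary follows without any additional probabilistic work.
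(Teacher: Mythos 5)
Your proof is correct and takes the same approach the paper intends: the paper states this result as a corollary of Lemma \ref{lem:bound_eigen} without writing out a proof, and your reduction to the shifted eigenvalue sequence $(\lambda_{k+1},\lambda_{k+2},\ldots)$, with the observation that Assumption \ref{asmp:TV_Hilbert} is uniform in $i$, followed by the weakening $\lambda_{k+1}\le\lambda_1$ and absorption of $\epsilon$-factors into $c(\epsilon)$, is precisely the implicit argument.
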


We give a variation of Lemma 5 of \citet{bartlett2020benign}.
\begin{lemma}\label{varBLLT20L05}
Under Assumption \ref{asmp:coherence}, there are constants $b=b\left(\epsilon\right)\ge1$ and $c=c\left(\epsilon\right)\ge 1$ such that for any $k\ge0$, with probability at least $1-2e^{-n/c}$, it holds that
\begin{enumerate}
    \item for all $i\ge 1$,
    \begin{align*}
        \mu_{k+1}\left(A_{-i}\right)\le \mu_{k+1}\left(A\right)\le \mu_{1}\left(A_{k}\right)\le c\left(\sum_{j>k}\lambda_{j}+n\lambda_{k+1}\right);
    \end{align*}
    \item for all $1\le i\le k$,
    \begin{align*}
        \mu_{n}\left(A\right)\ge \mu_{n}\left(A_{-i}\right)\ge \mu_{n}\left(A_{k}\right)\ge \frac{1}{c}\sum_{j>k}\lambda_{j}-cn\lambda_{k+1};
    \end{align*}
    \item if $r_{k}\left(\Sigma\right)\ge bn$, then
    \begin{align*}
        \frac{1}{c}\lambda_{k+1}r_{k}\left(\Sigma\right)\le \mu_{n}\left(A_{k}\right)\le \mu_{1}\left(A_{k}\right)\le c\lambda_{k+1}r_{k}\left(\Sigma\right).
    \end{align*}
\end{enumerate}
\end{lemma}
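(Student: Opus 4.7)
The plan is to combine the two-sided spectral bound from Corollary \ref{varBLLT20L04} (applied to the tail matrix $A_k$) with purely deterministic eigenvalue inequalities obtained from positive semi-definite ordering and rank counting. All three statements will hold on the single high-probability event supplied by Corollary \ref{varBLLT20L04} applied to $A_k$, so no further union bound is needed; this gives the overall failure probability $2e^{-n/c}$.

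First I would dispose of the deterministic interlacing. Writing $A = A_{-i} + \lambda_i \tilde z_i\tilde z_i^\top$ and $A_{-i} = A_k + \sum_{j\le k,\, j\neq i}\lambda_j \tilde z_j\tilde z_j^\top$, each added term is PSD, so the min--max characterization of eigenvalues immediately gives $\mu_m(A)\ge \mu_m(A_{-i})\ge \mu_m(A_k)$ for every $m\in\{1,\dots,n\}$ and every $1\le i\le k$. This yields the two chains of inequalities $\mu_{k+1}(A_{-i})\le \mu_{k+1}(A)$ in (1) and $\mu_n(A)\ge\mu_n(A_{-i})\ge\mu_n(A_k)$ in (2). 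For the remaining inequality $\mu_{k+1}(A)\le \mu_1(A_k)$ in (1), I would observe that $A - A_k = \sum_{j\le k}\lambda_j\tilde z_j\tilde z_j^\top$ has rank at most $k$; hence Weyl's inequality $\mu_{k+1}(A)\le \mu_{k+1}(A-A_k) + \mu_1(A_k)=\mu_1(A_k)$ gives the bound.

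Next I would apply Corollary \ref{varBLLT20L04} with the index shifted by $k$: interpreting the sequence $(\lambda_{k+1},\lambda_{k+2},\dots)$ as a new eigenvalue sequence (still satisfying the standing assumptions, because $\tilde z_j$ for $j>k$ are independent Gaussians with covariances in $[\epsilon,\epsilon^{-1}]$), we obtain, with probability at least $1-2e^{-n/c}$,
\begin{align*}
    \tfrac{1}{c}\sum_{j>k}\lambda_j - c\lambda_{k+1}n\;\le\;\mu_n(A_k)\;\le\;\mu_1(A_k)\;\le\;c\Bigl(\sum_{j>k}\lambda_j + \lambda_{k+1} n\Bigr).
\end{align*}
Combining the upper bound here with the deterministic step above finishes (1), and combining the lower bound with the deterministic step finishes (2).

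For (3) I would enlarge $b$. Since $\sum_{j>k}\lambda_j = \lambda_{k+1} r_k(\Sigma)\ge b n \lambda_{k+1}$, choosing $b=b(\epsilon)$ so that $b\ge 2c^{2}$ (with $c$ the constant above) makes $c\lambda_{k+1}n\le \tfrac{1}{2c}\sum_{j>k}\lambda_j$, so the lower bound becomes $\mu_n(A_k)\ge \tfrac{1}{2c}\sum_{j>k}\lambda_j = \tfrac{1}{2c}\lambda_{k+1}r_k(\Sigma)$, while the upper bound becomes $\mu_1(A_k)\le 2c\lambda_{k+1}r_k(\Sigma)$; after relabelling the constant this is exactly the claimed two-sided bound.

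The only non-routine step is the passage from Corollary \ref{varBLLT20L04}; the remaining ingredients are standard PSD ordering plus one Weyl inequality. I expect no real obstacle beyond verifying that the hypotheses of Corollary \ref{varBLLT20L04} do apply to the shifted eigenvalue sequence, which follows from the fact that the $\epsilon$-neighbouring property in Assumption \ref{asmp:TV_Hilbert} is uniform in $i$ and so is inherited by any sub-collection of the $\tilde z_j$'s.
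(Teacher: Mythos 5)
Your proof plan is correct and follows essentially the same route as the paper's: deterministic PSD interlacing plus one rank/Weyl argument for the inequalities involving $A$, $A_{-i}$, $A_k$, then Corollary \ref{varBLLT20L04} applied to $A_k$ for the two-sided spectral bound, and finally the algebra of enlarging $b$ to absorb the $n\lambda_{k+1}$ term for part (3). One point where you are actually more careful than the paper's write-up: Corollary \ref{varBLLT20L04} as stated gives $\lambda_1 n$ in the bounds, whereas Lemma \ref{varBLLT20L05} needs $\lambda_{k+1}n$, and the paper silently switches to $\lambda_{k+1}$ in the last step; your explicit observation that the corollary must be re-applied to the shifted eigenvalue sequence $(\lambda_{k+1},\lambda_{k+2},\ldots)$ — which is legitimate because Assumption \ref{asmp:TV_Hilbert} is uniform in $i$ and hence inherited by any sub-family of the $\tilde z_j$ — is exactly what closes that gap. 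A minor presentational nit: you state the chain $\mu_m(A)\ge\mu_m(A_{-i})\ge\mu_m(A_k)$ under the qualifier $1\le i\le k$, but the first inequality (needed for statement (1)) in fact holds for all $i\ge 1$, since $A-A_{-i}=\lambda_i\tilde z_i\tilde z_i^{\top}\succeq 0$ regardless of $i$; only the second inequality requires $i\le k$. The mathematics is fine, just worth stating separately.
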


\begin{proof}[Proof of Lemma \ref{varBLLT20L05}]
Let us recall Corollary \ref{varBLLT20L04}: there exists a constant $c_{1}=c_{1}\left(\epsilon\right)>0$ such that for all $k\ge 0$, with probability at least $1-2e^{-n/c_{1}}$,
\begin{align*}
    \frac{1}{c_{1}}\sum_{i>k}\lambda_{i}-c_{1}\lambda_{1}n\le \mu_{n}\left(A_{k}\right)\le\mu_{1}\left(A_{k}\right)\le c_{1}\left(\sum_{i>k}\lambda_{i}+\lambda_{1}n\right).
\end{align*}

Firstly, notice that the matrix $A-A_{k}$ has its rank at most $k$.
Thus, there is a linear space $\mathscr{L}$ of dimension $n-k$ such that for all $v\in\mathscr{L}$, $v^{\top}Av=v^{\top}A_{k}v\le \mu_{1}\left(A_{k}\right)\left\|v\right\|^{2}$ and therefore $\mu_{k+1}\left(A\right)\le \mu_{1}\left(A_{k}\right)$ \citep[Lemma S.10 of][]{bartlett2020benign}.

In the second place, Lemma S.11 of \citet{bartlett2020benign} yields that for all $i$ and $j$, $\mu_{j}\left(A_{-i}\right)\le \mu_{j}\left(A\right)$.
On the other hand, for all $i\le k$, $\mu_{n}\left(A_{-i}\right)\ge\mu_{n}\left(A_{k}\right)$ by $A_{k}\preceq A_{-i}$ and Lemma S.11 of \citet{bartlett2020benign} too.

Finally, if $r_{k}\left(\Sigma \right)\ge bn$ for some $b\ge 1$, 
\begin{align*}
    c_{1}\left(\sum_{j>k}\lambda_{j}+n\lambda_{k+1}\right)&=c_{1}\left(\lambda_{k+1}r_{k}\left(\Sigma \right)+n\lambda_{k+1}\right)\\
    &\le \left(c_{1}+\frac{c_{1}}{b}\right)\lambda_{k+1}r_{k}\left(\Sigma \right),\\
    \frac{1}{c_{1}}\sum_{j>k}\lambda_{j}-c_{1}n\lambda_{k+1}&=\frac{1}{c_{1}}\lambda_{k+1}r_{k}\left(\Sigma \right)-c_{1}n\lambda_{k+1}\\
    &\ge \left(\frac{1}{c_{1}}-\frac{c_{1}}{b}\right)\lambda_{k+1}r_{k}\left(\Sigma \right).
\end{align*}
Let $b>c_{1}^{2}$ and $c>\max\left\{c_{1}+1/c_{1},1/\left(1/c_{1}-c_{1}/b\right)\right\}$; then the third statement holds.
\end{proof}

\section{Upper and lower bounds on the variance term $T_{V}$} \label{sec:proof_hetero}

Using the notation above, we give another variation of Lemma 3 of \citet{bartlett2020benign} by introducing the effect of the noise covariance.
\begin{lemma}\label{varBLLT20L03He}
Let us assume $\lambda_{n+1}>0$. We have
\begin{align*}
    \trace\left(T_{V}\right)=\sum_{i:\lambda_{i}>0}\left[\lambda_{i}^{2}\Tilde{z}_{i}^{\top}\left(\sum_{j:\lambda_{j}>0}\lambda_{j}\Tilde{z}_{j}\Tilde{z}_{j}^{\top}\right)^{-1}\Tilde{\Upsilon}_{n}\left(\sum_{j:\lambda_{j}>0}\lambda_{j}\Tilde{z}_{j}\Tilde{z}_{j}^{\top}\right)^{-1}\Tilde{z}_{i}\right],
\end{align*}
where $\Tilde{z}_{i}\sim N\left(\mathbf{0},\Tilde{\Xi}_{i,n}\right)$'s are independent $\R^{n}$-valued random variables.
In addition, we have
\begin{align*}
    \trace\left(T_{V}\right)&\le\mu_{1}\left(\Tilde{\Upsilon}_{n}\right)\sum_{i:\lambda_{i}>0}\left[\lambda_{i}^{2}\Tilde{z}_{i}^{\top}\left(\sum_{j:\lambda_{j}>0}\lambda_{j}\Tilde{z}_{j}\Tilde{z}_{j}^{\top}\right)^{-2}\Tilde{z}_{i}\right],\\
    \trace\left(T_{V}\right)&\ge\mu_{n}\left(\Tilde{\Upsilon}_{n}\right)\sum_{i:\lambda_{i}>0}\left[\lambda_{i}^{2}\Tilde{z}_{i}^{\top}\left(\sum_{j:\lambda_{j}>0}\lambda_{j}\Tilde{z}_{j}\Tilde{z}_{j}^{\top}\right)^{-2}\Tilde{z}_{i}\right].
\end{align*}
Also, for all $i$ with $\lambda_{i}>0$, we have
\begin{align*}
    \lambda_{i}^{2}\Tilde{z}_{i}^{\top}\left(\sum_{j:\lambda_{j}>0}\lambda_{j}\Tilde{z}_{j}\Tilde{z}_{j}^{\top}\right)^{-2}\Tilde{z}_{i}=\frac{\lambda_{i}^{2}\Tilde{z}_{i}^{\top}A_{-i}^{-2}\Tilde{z}_{i}}{\left(1+\lambda_{i}\Tilde{z}_{i}^{\top}A_{-i}^{-1}\Tilde{z}_{i}\right)^{2}}.
\end{align*}
\end{lemma}

\begin{proof}[Proof of Lemma \ref{varBLLT20L03He}]
We have
\begin{align*}
    XX^{\top}&=\left(\sum_{i:\lambda_{i}>0}\lambda_{i}\Xi_{i,n}^{1/2}z_{i}z_{i}^{\top}\Xi_{i,n}^{1/2}\right),\text{~~and~~}
    X\Sigma X^{\top}
    =\left(\sum_{i:\lambda_{0,i}>0}\lambda_{i}^{2}\Xi_{i,n}^{1/2}z_{i}z_{i}^{\top}\Xi_{i,n}^{1/2}\right).
\end{align*}
Therefore, it holds that
\begin{align*}
    \trace\left(T_{V}\right)
    &=\trace\left(\Upsilon_{n}^{1/2}\left(XX^{\top}\right)^{-1}X\Sigma X^{\top}\left(XX^{\top}\right)^{-1}\Upsilon_{n}^{1/2}\right)\\
    &=\trace\left(\Xi_{0,n}^{-1/2}\Upsilon_{n}\Xi_{0,n}^{-1/2}\left(\Xi_{0,n}^{-1/2}XX^{\top}\Xi_{0,n}^{-1/2}\right)^{-1}\Xi_{0,n}^{-1/2}X\Sigma X^{\top}\Xi_{0,n}^{-1/2}\left(\Xi_{0,n}^{-1/2}XX^{\top}\Xi_{0,n}^{-1/2}\right)^{-1}\right)\\
    &=\sum_{i:\lambda_{i}>0}\left[\lambda_{i}^{2}\Tilde{z}_{i}^{\top}\left(\sum_{j:\lambda_{j}>0}\lambda_{j}\Tilde{z}_{j}\Tilde{z}_{j}^{\top}\right)^{-1}\Tilde{\Upsilon}_{n}\left(\sum_{j:\lambda_{j}>0}\lambda_{j}\Tilde{z}_{j}\Tilde{z}_{j}^{\top}\right)^{-1}\Tilde{z}_{i}\right].
\end{align*}
The remaining statements immediately follow by Lemma S.3 of \citet{bartlett2020benign}.
\end{proof}
\subsection{Upper bound on $T_{V}$}
We provide the following lemma for an upper bound by developing a variation of Lemma 6 of \citet{bartlett2020benign}.
\begin{lemma} \label{lem:TV_upper}
Under Assumption \ref{asmp:coherence}, there are constants $b=b\left(\epsilon\right)\ge 1$ and $c=c\left(\epsilon\right)\ge 1$ such that if $0\le k\le n/c$, $r_{k}\left(\Sigma \right)\ge bn$, and $l\le k$, then with probability at least $1-7e^{-n/c}$, we have
\begin{align*}
    \trace\left(T_{V}\right)\le c\mu_{1}\left(\Tilde{\Upsilon}_{n}\right)\left(\frac{l}{n}+n\frac{\sum_{i>l}\lambda_{i}^{2}}{\left(\sum_{i>k}\lambda_{i}\right)^{2}}\right).
\end{align*}
\end{lemma}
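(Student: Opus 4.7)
The plan is to reduce $\trace(T_V)$ to a sum of Gaussian quadratic forms via Lemma \ref{varBLLT20L03He}, then mirror the decomposition strategy behind Lemma 6 of \citet{bartlett2020benign}, taking care that the non-identity covariances $\tilde\Xi_{i,n}$ only perturb the constants by $\epsilon$-dependent factors. First I would apply Lemma \ref{varBLLT20L03He} to obtain
\begin{align*}
    \trace(T_V) \le \mu_1\bigl(\tilde\Upsilon_n\bigr) \sum_{i:\lambda_i>0} \frac{\lambda_i^2\,\tilde z_i^\top A_{-i}^{-2}\tilde z_i}{\bigl(1+\lambda_i\,\tilde z_i^\top A_{-i}^{-1}\tilde z_i\bigr)^2}.
\end{align*}
The proof then reduces to bounding this sum by $c(\epsilon)\bigl(l/n + n\sum_{i>l}\lambda_i^2 / (\sum_{j>k}\lambda_j)^2\bigr)$ on a high-probability event.

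Next I would fix the good event on which all deterministic spectral estimates hold. Lemma \ref{varBLLT20L05}(iii) gives $\mu_n(A_k)\asymp\mu_1(A_k)\asymp\lambda_{k+1}r_k(\Sigma)\asymp\sum_{j>k}\lambda_j$ under $r_k(\Sigma)\ge bn$, while parts (i)-(ii) yield $\mu_n(A_{-i})\ge\mu_n(A_k)$ and $\mu_{k+1}(A_{-i})\le\mu_1(A_k)$ for all $i\le k$. Using Assumption \ref{asmp:TV_Hilbert} we have $\trace(\tilde\Xi_{i,n})\in[\epsilon n,\epsilon^{-1}n]$ and $\mu_1(\tilde\Xi_{i,n})\le\epsilon^{-1}$, so Lemma \ref{varBLLT20C01} (with $d=0$ and $u$ of order $n/c$) gives $\|\tilde z_i\|^2\asymp_\epsilon n$ with probability at least $1-3e^{-n/c}$. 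A union bound over $i\le l\le k\le n/c$ lets me impose all $l+O(1)$ of these concentration bounds simultaneously at a total cost absorbed into a single $e^{-n/c}$-type term.

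With these estimates in hand I would split the sum at $i=l$. For $i\le l$ I would use $(1+x)^2\ge x^2$ followed by the spectral inequalities $\tilde z_i^\top A_{-i}^{-2}\tilde z_i\le\mu_n(A_{-i})^{-1}\tilde z_i^\top A_{-i}^{-1}\tilde z_i$ and $\tilde z_i^\top A_{-i}^{-1}\tilde z_i\ge\|\tilde z_i\|^2/\mu_1(A_{-i})$ to obtain the pointwise bound $\mu_1(A_{-i})/\bigl(\mu_n(A_{-i})\|\tilde z_i\|^2\bigr)=O_\epsilon(1/n)$, and summing gives an $l/n$ contribution. For $i>l$ I would instead drop the denominator and bound
\begin{align*}
    \sum_{i>l}\lambda_i^2\,\tilde z_i^\top A_{-i}^{-2}\tilde z_i \;\le\;\sum_{i>l}\lambda_i^2\,\mu_n(A_{-i})^{-2}\|\tilde z_i\|^2,
\end{align*}
using the lower bounds on $\mu_n(A_{-i})$ above (and, for $i>k$, the Sherman–Morrison identity $A^{-1}\tilde z_i=A_{-i}^{-1}\tilde z_i/(1+\lambda_i\tilde z_i^\top A_{-i}^{-1}\tilde z_i)$ together with the trace identity $\sum_i\lambda_i^2\tilde z_i^\top A^{-2}\tilde z_i=\trace(A^{-1}(\sum_i\lambda_i^2\tilde z_i\tilde z_i^\top)A^{-1})$ to avoid needing pointwise concentration for each tail index). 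Combined with $\|\tilde z_i\|^2=\Theta_\epsilon(n)$ in expectation, this yields the tail contribution $n\sum_{i>l}\lambda_i^2/(\sum_{j>k}\lambda_j)^2$ up to an $\epsilon$-dependent constant.

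The main obstacle I anticipate is the tail sum ($i>l$, especially $i>k$): the leave-one-out matrices $A_{-i}$ do not all dominate $A_k$, so I cannot apply Lemma \ref{varBLLT20L05} part (ii) directly, and I cannot afford a pointwise union bound over an unbounded (possibly infinite-dimensional) index set. Following the averaging trick of \citet{bartlett2020benign}, I would transfer the sum from $A_{-i}^{-2}$ to $A^{-2}$ via Sherman–Morrison and bound the resulting trace deterministically using the control on $\mu_n(A_k)$, avoiding any pointwise handling of large $i$. The only hetero-correlated novelty here is that the covariance $\tilde\Xi_{i,n}$ depends on $i$, but Assumption \ref{asmp:TV_Hilbert} ensures every appearance of $\tilde\Xi_{i,n}$ is sandwiched between $\epsilon I_n$ and $\epsilon^{-1}I_n$ (after $\Xi_{0,n}$-rescaling), so the i.i.d.\ argument of \citet{bartlett2020benign} goes through with all constants upgraded to $c=c(\epsilon)$, which is exactly the form of the stated bound.
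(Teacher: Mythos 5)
Your proposal follows the same strategy as the paper's proof of Lemma \ref{lem:TV_upper}: reduce to Gaussian quadratic forms via Lemma \ref{varBLLT20L03He}, split at $i=l$, handle $i\le l$ by the leave-one-out representation together with Lemma \ref{varBLLT20L05} and a union bound on the $l$ applications of Lemma \ref{varBLLT20C01}, and handle $i>l$ in the $A^{-2}$ form with a uniform lower bound on $\mu_n(A)$. That is exactly the structure of the paper's argument.

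One point in your sketch is imprecise in a way that matters for the stated conclusion. For the tail sum you reduce (correctly, via the trace identity) to controlling $\sum_{i>l}\lambda_i^2\|\tilde z_i\|^2$, but you then say the bound follows because $\|\tilde z_i\|^2=\Theta_\epsilon(n)$ ``in expectation.'' Expectation alone gives you $\Ep\bigl[\sum_{i>l}\lambda_i^2\|\tilde z_i\|^2\bigr]\le\epsilon^{-1}n\sum_{i>l}\lambda_i^2$, but the lemma asserts a bound that holds with probability at least $1-7e^{-n/c}$, so you need a \emph{high-probability} upper bound on that sum. You cannot obtain it by a pointwise union bound over $i>l$ (the index set is too large or infinite), and a Markov step from expectation would only give probability $1-O(1)$, not $1-e^{-n/c}$. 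The paper instead writes $\|\tilde z_i\|^2\le\epsilon^{-1}\|z_i\|^2$ with $z_i\sim N(\mathbf 0,I_n)$ independent across $i$, so $\sum_{i>l}\lambda_i^2\|z_i\|^2=\sum_{i>l}\sum_{t}\lambda_i^2(z_i^{(t)})^2$ is a weighted sum of i.i.d.\ sub-exponentials, and a single application of a Bernstein-type inequality (Lemma 2.7.6 of \citet{Vershynin2018High}) yields $\sum_{i>l}\lambda_i^2\|\tilde z_i\|^2\le c\,n\sum_{i>l}\lambda_i^2$ with probability at least $1-2e^{-t}$ uniformly over the tail. You should replace the ``in expectation'' step with this one-shot concentration inequality; everything else in your plan is sound and matches the paper.
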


\begin{proof}[Proof of Lemma \ref{lem:TV_upper}]
Fix $b=b\left(\epsilon\right)$ to its value in Lemma \ref{varBLLT20L05}.
By Lemma \ref{varBLLT20L03He}, we have
\begin{align*}
    \trace\left(T_{V}\right)&\le \mu_{1}\left(\Tilde{\Upsilon}_{n}\right)\sum_{i}\lambda_{i}^{2}\Tilde{z}_{i}^{\top}A^{-2}\Tilde{z}_{i}\\
    &\le \mu_{1}\left(\Tilde{\Upsilon}_{n}\right)\left(\sum_{i=1}^{l}\frac{\lambda_{i}^{2}\Tilde{z}_{i}^{\top}A_{-i}^{-2}\Tilde{z}_{i}}{\left(1+\lambda_{i}\Tilde{z}_{i}^{\top}A_{-i}^{-1}\Tilde{z}_{i}\right)^{2}}+\sum_{i>l}\lambda_{i}^{2}\Tilde{z}_{i}^{\top}A^{-2}\Tilde{z}_{i}\right).
\end{align*}
Firstly, let us consider the sum up to $l$.
Lemma \ref{varBLLT20L05} shows that there exists a constant $c_{1}=c_{1}\left(\epsilon\right)\ge1$ such that on the event $E_{1}$ with $P\left(E_{1}\right)\ge 1-2e^{-n/c_{1}}$, if $k$ satisfies $r_{k}\left(\Sigma \right)\ge bn$, then for all $i\le k$ $\mu_{n}\left(A_{-i}\right)\ge \lambda_{k+1}r_{k}\left(\Sigma \right)/c_{1}$, and for all $i\ge 1$ $\mu_{k+1}\left(A_{-i}\right)\le c_{1}\lambda_{k+1}r_{k}\left(\Sigma \right)$.
Hence on $E_{1}$, for all $z\in\R^{n}$ and $1\le i\le l$, we have
\begin{align*}
    \Tilde{z}^{\top} A_{-i}^{-2}\Tilde{z}&\le \frac{c_{1}^{2}\left\|\Tilde{z}\right\|^{2}}{\left(\lambda_{k+1}r_{k}\left(\Sigma \right)\right)^{2}},\\
    \Tilde{z}^{\top}A_{-i}^{-1}\Tilde{z}&\ge \left(\Pi_{\mathscr{L}_{i}}\Tilde{z}\right)^{\top} A_{-i}^{-1}\Pi_{\mathscr{L}_{i}}\Tilde{z}\ge \frac{\left\|\Pi_{\mathscr{L}_{i}}\Tilde{z}\right\|^{2}}{c_{1}\lambda_{k+1}r_{k}\left(\Sigma \right)},
\end{align*}
where $\mathscr{L}_{i}$ is the span of the $n-k$ eigenvectors of $A_{-i}$ corresponding to its smallest $n-k$ eigenvalues.
Therefore, on $E_{1}$, for all $i\le l$, we have
\begin{align*}
    \frac{\lambda_{i}^{2}\Tilde{z}_{i}^{\top}A_{-i}^{-2}\Tilde{z}_{i}}{\left(1+\lambda_{i}\Tilde{z}_{i}^{\top}A_{-i}^{-1}\Tilde{z}_{i}\right)^{2}}\le \frac{\Tilde{z}_{i}^{\top}A_{-i}^{-2}\Tilde{z}_{i}}{\left(\Tilde{z}_{i}^{\top}A_{-i}^{-1}\Tilde{z}_{i}\right)^{2}}\le c_{1}^{4}\frac{\left\|\Tilde{z}_{i}\right\|^{2}}{\left\|\Pi_{\mathscr{L}_{i}}\Tilde{z}_{i}\right\|^{2}}.
\end{align*}
We apply Lemma \ref{varBLLT20C01} $l$ times together with a union bound to show that, there exist constant $c_{0}=c_{0}\left(a,\epsilon\right)>0$, $c_{2}=c_{2}\left(a,c_{0},\epsilon\right)>0$ and $c_{3}=c_{3}\left(a,c_{0},\epsilon\right)>0$ such that on the event $E_{2}$ with $P\left(E_{2}\right)\ge 1-3e^{-t}$, for all $1\le i\le l$, we have
\begin{align*}
    \left\|\Tilde{z}_{i}\right\|^{2}&\le \epsilon^{-1}n+a\left(t+\log k+\sqrt{n\left(t+\log k\right)}\right)\le c_{2}n,\\
    \left\|\Pi_{\mathscr{L}_{i}}\Tilde{z}_{i}\right\|^{2}&\ge \epsilon n-a\left(k+t+\log k+\sqrt{n\left(t+\log k\right)}\right)\ge n/c_{3},
\end{align*}
provided that $t<n/c_{0}$ and $c>c_{0}$ owing to $\trace(\Xi_{0,n}^{-1}\Xi_{i,n})\in [\epsilon n,\epsilon^{-1}n]$ and $\log k\le n/c\le n/c_{0}$.
Then, there exists a constant $c_{4}=c_{4}\left(c_{1},c_{2},c_{3}\right)>0$ on the event $E_{1}\cap E_{2}$ with $P\left(E_{1}\cap E_{2}\right)\ge 1-5e^{-n/c_{0}}$, it holds that
\begin{align*}
    \sum_{i=1}^{l}\frac{\lambda_{i}^{2}\Tilde{z}_{i}^{\top}A_{-i}^{-2}\Tilde{z}_{i}}{\left(1+\lambda_{i}\Tilde{z}_{i}^{\top}A_{-i}^{-1}\Tilde{z}_{i}\right)^{2}}\le c_{4}\frac{l}{n}.
\end{align*}

In the second place, we consider the sum $\sum_{i>l}\lambda_{i}^{2}\Tilde{z}_{i}^{\top}A^{-2}\Tilde{z}_{i}$.
Lemma \ref{varBLLT20L05} shows that on $E_{1}$, $\mu_{n}\left(A\right)\ge \lambda_{k+1}r_{k}\left(\Sigma \right)/c_{1}$, and thus we have
\begin{align*}
    \sum_{i>l}\lambda_{i}^{2}\Tilde{z}_{i}^{\top}A^{-2}\Tilde{z}_{i}\le \frac{c_{1}^{2}\sum_{i>l}\lambda_{i}^{2}\left\|\Tilde{z}_{i}\right\|^{2}}{\left(\lambda_{k+1}r_{k}\left(\Sigma \right)\right)^{2}}.
\end{align*}
Note that we have
\begin{align*}
    \sum_{i>l}\lambda_{i}^{2}\left\|\Tilde{z}_{i}\right\|^{2}=\sum_{i>l}\lambda_{i}^{2}z_{i}^{\top}\Xi_{i,n}z_{i}\le \epsilon^{-1}\sum_{i>l}\lambda_{i}^{2}\left\|z_{i}\right\|^{2}=\epsilon^{-1}\sum_{i>l}\lambda_{i}^{2}\sum_{t=1}^{n}\left(z_{i}^{(t)}\right)^{2},
\end{align*}
where $z_{i}=\Xi_{i,n}^{-1/2}\Tilde{z}_{i}$, and $z_{i}^{(t)}\sim N\left(0,1\right)$ are i.i.d.\ random variables.
Lemma 2.7.6 of \citet{Vershynin2018High} yields that there exists a constant $c_{5}=c_{5}\left(a,c_{0},\epsilon\right)>0$ on the event $E_{3}$ with $P\left(E_{3}\right)\ge 1-2e^{-t}$, it holds that
\begin{align*}
    \sum_{i>l}\lambda_{i}^{2}\left\|\Tilde{z}_{i}\right\|^{2}&\le \epsilon^{-1}\sum_{i>l}\lambda_{i}^{2}\sum_{t=1}^{n}\left(z_{i}^{(t)}\right)^{2}\\
    &\le \epsilon^{-1} \left(n\sum_{i>l}\lambda_{i}^{2}+a\max\left\{\lambda_{l+1}^{2}t,\sqrt{tn\sum_{i>l}\lambda_{i}^{4}}\right\}\right)\\
    &\le \epsilon^{-1} \left(n\sum_{i>l}\lambda_{i}^{2}+a\max\left\{\sum_{i>l}\lambda_{i}^{2}t,\sqrt{tn}\sum_{i>l}\lambda_{i}^{2}\right\}\right)\\
    &\le c_{5}n\sum_{i>l}\lambda_{i}^{2}
\end{align*}
because $t<n/c_{0}$ holds.
Then there exists a constant $c_{6}=c_{6}\left(c_{1},c_{5}\right)>0$ such that on $E_{1}\cap E_{2}\cap E_{3}$ with $P\left(E_{1}\cap E_{2}\cap E_{3}\right)\ge1-7e^{-n/c_{0}}$, it holds that
\begin{align*}
    \sum_{i>l}\lambda_{i}^{2}\Tilde{z}_{i}^{\top}A^{-2}\Tilde{z}_{i}\le c_{6}n\frac{\sum_{i>l}\lambda_{i}^{2}}{\left(\lambda_{k+1}r_{k}\left(\Sigma \right)\right)^{2}}.
\end{align*}
Choosing $c>\max\left\{c_{0},c_{4},c_{6}\right\}$ gives the lemma.
\end{proof}

\subsection{Lower bounds on $T_{V}$}

We start with the following lemma as a variation of Lemma 8 of \citet{bartlett2020benign}.
\begin{lemma}\label{varBLLT20L08}
Under Assumption \ref{asmp:coherence}, there is a constant $c=c\left(\epsilon\right)>0$ such that for any $i\ge 1$ with $\lambda_{i}>0$ and any $0\le k\le n/c$, with probability at least $1-5e^{-n/c}$, it holds that
\begin{align*}
    \frac{\lambda_{i}^{2}\Tilde{z}_{i}^{\top}A_{-i}^{-2}\Tilde{z}_{i}}{\left(1+\lambda_{i}\Tilde{z}_{i}^{\top}A_{-i}^{-1}\Tilde{z}_{i}\right)^{2}}\ge \frac{1}{cn}\left(1+\frac{\sum_{j>k}\lambda_{j}+n\lambda_{k+1}}{n\lambda_{i}}\right)^{-2}.
\end{align*}
\end{lemma}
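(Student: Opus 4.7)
The plan is to adapt Lemma~8 of \citet{bartlett2020benign} to the present hetero-correlated, Hilbert-valued setting. Writing $\Lambda_k := \sum_{j>k}\lambda_j + n\lambda_{k+1}$, the target bound is equivalent to
\begin{align*}
    \frac{\lambda_i^2\, \tilde z_i^\top A_{-i}^{-2}\tilde z_i}{(1+\lambda_i \tilde z_i^\top A_{-i}^{-1}\tilde z_i)^2}\;\ge\; \frac{n\lambda_i^2}{c(n\lambda_i+\Lambda_k)^2}.
\end{align*}
The core strategy is: lower-bound the numerator by restricting $A_{-i}^{-1}$ to the eigenspace of $A_{-i}$ spanned by its smallest $n-k$ eigenvectors, and upper-bound the denominator via the Cauchy--Schwarz inequality $(\tilde z_i^\top A_{-i}^{-1}\tilde z_i)^2 \le \|\tilde z_i\|^2\cdot \tilde z_i^\top A_{-i}^{-2}\tilde z_i$. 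Substituting and simplifying then yields the claim.

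Concretely, first I would apply Lemma~\ref{varBLLT20L05}(1) to obtain a constant $c_1=c_1(\epsilon)$ and an event $E_1$ of probability at least $1-2e^{-n/c_1}$ on which $\mu_{k+1}(A_{-i}) \le c_1\Lambda_k$. Let $\mathscr L_i\subset\R^n$ be the span of the $n-k$ eigenvectors of $A_{-i}$ associated with its smallest eigenvalues; since $A_{-i}$ is independent of $z_i$, so is $\mathscr L_i$, and $\Pi_{\mathscr L_i}$ commutes with $A_{-i}^{-1}$. Hence on $E_1$,
\begin{align*}
    w := \tilde z_i^\top A_{-i}^{-2}\tilde z_i \;\ge\; \frac{\|\Pi_{\mathscr L_i}\tilde z_i\|^2}{\mu_{k+1}(A_{-i})^2} \;\ge\; \frac{\|\Pi_{\mathscr L_i}\tilde z_i\|^2}{c_1^2\Lambda_k^2}.
\end{align*}
Next, applying Lemma~\ref{varBLLT20C01} to $\tilde z_i\sim N(\mathbf{0},\tilde\Xi_{i,n})$ with $u=n/c_0$ and using Assumption~\ref{asmp:TV_Hilbert} to control $\mu_1(\tilde\Xi_{i,n})\le\epsilon^{-1}$ and $\trace(\tilde\Xi_{i,n})\in[\epsilon n,\epsilon^{-1}n]$, one obtains an event $E_2$ of probability at least $1-3e^{-n/c_0}$ on which $\|\tilde z_i\|^2\le c_2 n$ and $\|\Pi_{\mathscr L_i}\tilde z_i\|^2 \ge n/c_2$, provided $c_0$ and the overall $c$ are chosen large enough that the codimension $k\le n/c$ is dominated by the expected mass. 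On $E_1\cap E_2$ we then have $w\ge n/(c_3\Lambda_k^2)$ and $(1+\lambda_i \tilde z_i^\top A_{-i}^{-1}\tilde z_i)^2 \le 2+2c_2 n\lambda_i^2 w$, so the ratio is at least $\lambda_i^2 w/(2+2c_2 n\lambda_i^2 w)$. This is increasing in $w$, so substituting $w\ge n/(c_3\Lambda_k^2)$ and using the elementary inequality $(n\lambda_i+\Lambda_k)^2\ge n^2\lambda_i^2+\Lambda_k^2$ yields
\begin{align*}
    \frac{\lambda_i^2 w}{2+2c_2 n\lambda_i^2 w} \;\ge\; \frac{n\lambda_i^2}{2c_3\Lambda_k^2 + 2c_2 n^2\lambda_i^2} \;\ge\; \frac{n\lambda_i^2}{c(n\lambda_i+\Lambda_k)^2} \;=\; \frac{1}{cn}\left(1+\frac{\Lambda_k}{n\lambda_i}\right)^{-2},
\end{align*}
after enlarging $c$ to absorb $c_2,c_3$. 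The total probability of $E_1\cap E_2$ is at least $1-5e^{-n/c}$, as required.

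The main obstacle lies not in the algebra above but in the delicate tracking of how the neighboring constant $\epsilon$ from Assumption~\ref{asmp:TV_Hilbert} propagates through each estimate. In the i.i.d.\ setting of \citet{bartlett2020benign} one has $\tilde\Xi_{i,n}=I_n$, so $\trace(\tilde\Xi_{i,n})=n$ and $\mu_1(\tilde\Xi_{i,n})=1$ exactly; here we have only the sandwich $[\epsilon n,\epsilon^{-1}n]$ and $[\epsilon,\epsilon^{-1}]$, which means each invocation of the Hanson--Wright-type concentration in Lemma~\ref{varBLLT20C01} costs $\epsilon$-dependent factors that must be cleanly absorbed together with the $c_1(\epsilon)$ from Lemma~\ref{varBLLT20L05} into a single constant $c=c(\epsilon)$ without disturbing the shape $(1+\Lambda_k/(n\lambda_i))^{-2}$. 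The point that makes this absorption work is the elementary inequality $(n\lambda_i+\Lambda_k)^2\ge n^2\lambda_i^2+\Lambda_k^2$, which allows the two separate terms of the denominator $2+2c_2 n\lambda_i^2 w$ to be dominated simultaneously by a single quadratic; without it, one of the two regimes $n\lambda_i\gtrless \Lambda_k$ would have to be handled through a case split, as in the original proof of \citet{bartlett2020benign}.
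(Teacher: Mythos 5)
Your proof is correct and follows essentially the same route as the paper's: apply Lemma \ref{varBLLT20L05}(1) to bound $\mu_{k+1}(A_{-i})$, project onto the low-eigenvalue eigenspace $\mathscr L_i$, invoke Lemma \ref{varBLLT20C01} to control $\|\Pi_{\mathscr L_i}\tilde z_i\|^2$ from below and $\|\tilde z_i\|^2$ from above, and close with Cauchy--Schwarz, differing only in the algebraic arrangement (you lower-bound $\tilde z_i^\top A_{-i}^{-2}\tilde z_i$ and push Cauchy--Schwarz into the denominator with $(1+a)^2\le 2+2a^2$, while the paper lower-bounds $\tilde z_i^\top A_{-i}^{-1}\tilde z_i$, factors $(1+\lambda_i v)^{2}\le(1+1/(\lambda_i v_{\min}))^{2}\lambda_i^2 v^2$, and applies Cauchy--Schwarz to $\tilde z_i^\top A_{-i}^{-2}\tilde z_i/(\tilde z_i^\top A_{-i}^{-1}\tilde z_i)^2\ge 1/\|\tilde z_i\|^2$). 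One small inaccuracy in your closing remark: neither the present paper nor \citet{bartlett2020benign} needs a case split on $n\lambda_i\gtrless\Lambda_k$ — the paper absorbs the constant simply because $c_3\ge 1$ implies $(c_3\Lambda_k/(n\lambda_i)+1)\le c_3(\Lambda_k/(n\lambda_i)+1)$.
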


\begin{proof}[Proof of Lemma \ref{varBLLT20L08}]
Fix $i\ge 1$ with $\lambda_{i}>0$ and $k$ with $0\le k\le n/c_{0}$, where $c_{0}=c_{0}\left(a,\epsilon\right)>0$ is a sufficiently large constant such that there exists $c_{2}=c_{2}\left(a,c_{0},\epsilon\right)$ with
\begin{align*}
    \epsilon n-a\epsilon^{-1}\left(2n/c_{0}+n/\sqrt{c_{0}}\right)\ge n/c_{2}.
\end{align*}
By Lemma \ref{varBLLT20L05}, there exists a constant $c_{1}=c_{1}\left(\epsilon\right)\ge 1$ such that on the event $E_{1}$ with $P\left(E_{1}\right)\ge1-2e^{-n/c_{1}}$, it holds that
\begin{align*}
    \mu_{k+1}\left(A_{-i}\right)\le c_{1}\left(\sum_{j>k}\lambda_{j}+n\lambda_{k+1}\right),
\end{align*}
and hence we have
\begin{align*}
    \Tilde{z}_{i}^{\top}A_{-i}^{-1}\Tilde{z}_{i}\ge \frac{\left\|\Pi_{\mathscr{L}_{i}}\Tilde{z}_{i}\right\|^{2}}{c_{1}\left(\sum_{j>k}\lambda_{j}+n\lambda_{k+1}\right)},
\end{align*}
where $\mathscr{L}_{i}$ is the span of the $n-k$ eigenvectors of $A_{-i}$ corresponding to its smallest $n-k$ eigenvalues.
Lemma \ref{varBLLT20C01} and the definitions of $c_{0}$ and $c_{2}$ give that on the event $E_{2}$ with $P\left(E_{2}\right)\ge 1-3e^{-t}$, we have
\begin{align*}
    \left\|\Pi_{\mathscr{L}_{i}}\Tilde{z}_{i}\right\|^{2}\ge \epsilon n-a\epsilon^{-1}\left(k+t+\sqrt{tn}\right)\ge \epsilon n-a\epsilon^{-1}\left(2n/c_{0}+n/\sqrt{c_{0}}\right)\ge  n/c_{2},
\end{align*}
provided that $t<n/c_{0}$.
Hence, there exists a constant $c_{3}=c_{3}\left(c_{0},c_{1},c_{2}\right)>0$ such that on the event $E_{1}\cap E_{2}$ with $P\left(E_{1}\cap E_{2}\right)\ge 1-5e^{-n/c_{3}}$, we have
\begin{align*}
    \Tilde{z}_{i}^{\top}A_{-i}^{-1}\Tilde{z}_{i}\ge \frac{n}{c_{3}\left(\sum_{j>k}\lambda_{j}+n\lambda_{k+1}\right)},
\end{align*}
and
\begin{align*}
    1+\lambda_{i}\Tilde{z}_{i}^{\top}A_{-i}^{-1}\Tilde{z}_{i}\le \left(\frac{c_{3}\left(\sum_{j>k}\lambda_{j}+n\lambda_{k+1}\right)}{\lambda_{i}n}+1\right)\lambda_{i}\Tilde{z}_{i}^{\top}A_{-i}^{-1}\Tilde{z}_{i}.
\end{align*}
Then we have
\begin{align*}
    \frac{\lambda_{i}^{2}\Tilde{z}_{i}^{\top}A_{-i}^{-2}\Tilde{z}_{i}}{\left(1+\lambda_{i}\Tilde{z}_{i}^{\top}A_{-i}^{-1}\Tilde{z}_{i}\right)^{2}}\ge \left(\frac{c_{3}\left(\sum_{j>k}\lambda_{j}+n\lambda_{k+1}\right)}{\lambda_{i}n}+1\right)^{-2}\frac{\Tilde{z}_{i}^{\top}A_{-i}^{-2}\Tilde{z}_{i}}{\left(\Tilde{z}_{i}^{\top}A_{-i}^{-1}\Tilde{z}_{i}\right)^{2}}.
\end{align*}
The Cauchy--Schwarz inequality and Lemma \ref{varBLLT20C01} yield that there exists $c_{4}=\left(a,c_{0},\epsilon\right)>0$ such that on $E_{1}$, it holds that
\begin{align*}
    \frac{\Tilde{z}_{i}^{\top}A_{-i}^{-2}\Tilde{z}_{i}}{\left(\Tilde{z}_{i}^{\top}A_{-i}^{-1}\Tilde{z}_{i}\right)^{2}}\ge \frac{\Tilde{z}_{i}^{\top}A_{-i}^{-2}\Tilde{z}_{i}}{\left\|A_{-i}^{-1}\Tilde{z}_{i}\right\|^{2}\left\|\Tilde{z}_{i}\right\|^{2}}=\frac{1}{\left\|\Tilde{z}_{i}\right\|^{2}}\ge \frac{1}{\epsilon^{-1}n+a\epsilon^{-1}\left(t+\sqrt{nt}\right)}\ge \frac{1}{c_{4}n}.
\end{align*}
Then, there exists a constant $c_{5}=c_{5}\left(c_{3},c_{4}\right)$ such that for all $i\ge 1$ with $\lambda_{i}>0$ and $0\le k\le n/c_{0}$, with probability at least $1-5e^{-n/c_{3}}$, we have
\begin{align*}
    \frac{\lambda_{i}^{2}\Tilde{z}_{i}^{\top}A_{-i}^{-2}\Tilde{z}_{i}}{\left(1+\lambda_{i}\Tilde{z}_{i}^{\top}A_{-i}^{-1}\Tilde{z}_{i}\right)^{2}}\ge\frac{1}{c_{4}n}\left(\frac{c_{3}\left(\sum_{j>k}\lambda_{j}+n\lambda_{k+1}\right)}{\lambda_{i}n}+1\right)^{-2}\ge \frac{1}{c_{5}n}\left(1+\frac{\sum_{j>k}\lambda_{j}+n\lambda_{k+1}}{n\lambda_{i}}\right)^{-2}.
\end{align*}
Choosing $c>\max\left\{c_{0},c_{3},c_{5}\right\}$ gives the lemma.
\end{proof}

Using the above result, we develop a lower bound on $ \trace\left(T_{V}\right)$, by a variation of Lemma 10 of \citet{bartlett2020benign}.
\begin{lemma} \label{lem:TV_lower}
Under Assumption \ref{asmp:coherence}, there is a constant $c=c\left(\epsilon\right)>0$ such that for any $0\le k\le n/c$ and any $b>1$ with probability at least $1-10e^{-n/c}$, we have
\begin{enumerate}
    \item if $r_{k}\left(\Sigma \right)<bn$, then $\trace\left(T_{V}\right)\ge \mu_{n}\left(\Tilde{\Upsilon}_{n}\right)\left(k+1\right)/\left(cb^{2}n\right)$;
    \item if $r_{k}\left(\Sigma \right)\ge bn$, then
    \begin{align*}
        \trace\left(T_{V}\right)\ge \frac{\mu_{n}\left(\Tilde{\Upsilon}_{n}\right)}{cb^{2}}\min_{l\le k}\left(\frac{l}{n}+\frac{b^{2}n\sum_{i>l}\lambda_{i}^{2}}{\left(\lambda_{k+1}r_{k}\left(\Sigma \right)\right)^{2}}\right).
    \end{align*}
\end{enumerate}
In particular, if all choices of $k\le n/c$ give $r_{k}\left(\Sigma \right)<bn$, then $r_{n/c}\left(\Sigma \right)<bn$ implies that, with probability at least $1-10e^{-n/c}$, $\trace\left(T_{V}\right) \gtrsim \mu_{n}\left(\Tilde{\Upsilon}_{n}\right)$.
\end{lemma}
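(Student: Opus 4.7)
The plan is to start from the termwise representation of $\trace(T_V)$ given in Lemma \ref{varBLLT20L03He},
\begin{align*}
\trace(T_V) \ge \mu_n(\tilde{\Upsilon}_n)\sum_{i:\lambda_i>0}\frac{\lambda_i^2 \tilde{z}_i^\top A_{-i}^{-2}\tilde{z}_i}{(1 + \lambda_i \tilde{z}_i^\top A_{-i}^{-1}\tilde{z}_i)^2},
\end{align*}
and apply Lemma \ref{varBLLT20L08} termwise (after a union bound over the relevant indices) to bound each summand below by $(cn)^{-1}(1+S/(n\lambda_i))^{-2}$, where $S:=\sum_{j>k}\lambda_j + n\lambda_{k+1}$. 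The remainder of the proof aggregates these termwise bounds by splitting on the size of $r_k(\Sigma)$, closely paralleling Lemma~10 of \citet{bartlett2020benign} in the i.i.d.\ case; the noise covariance enters only through the prefactor $\mu_n(\tilde{\Upsilon}_n)$.

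For Case~1 ($r_k(\Sigma)<bn$), I would use $S\le (b+1)n\lambda_{k+1}$ and the monotonicity $\lambda_i\ge\lambda_{k+1}$ for $i\le k+1$ to conclude $S/(n\lambda_i)\le b+1\lesssim b$; each of the first $k+1$ terms is then $\gtrsim 1/(b^2 n)$, and summing yields the claimed $(k+1)/(cb^2 n)$ bound. The ``in particular'' statement is this bound with $k=\lfloor n/c\rfloor$, giving $\trace(T_V)\gtrsim \mu_n(\tilde{\Upsilon}_n)$. For Case~2 ($r_k(\Sigma)\ge bn$), I would bound $S\le 2\lambda_{k+1}r_k(\Sigma)$ (using $n\le r_k(\Sigma)/b$) and then invoke the elementary inequality $(1+x)^{-2}\ge\min\{1/4,1/(4x^2)\}$ to lower bound each summand by $(4c)^{-1}\min\{1/n,\;n\lambda_i^2/S^2\}$.

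Next, introduce the adaptive threshold $l^*:=|\{i:\lambda_i\ge S/n\}|$. A one-line argument shows $l^*\le k$: if $l^*\ge k+1$ then $\lambda_{k+1}\ge S/n\ge (r_k(\Sigma)/n)\lambda_{k+1}\ge b\lambda_{k+1}$, contradicting $b>1$. Splitting the sum at $l^*$, using the $1/n$ branch for $i\le l^*$ and the $n\lambda_i^2/S^2$ branch for $i>l^*$, gives
\begin{align*}
\sum_i \frac{\lambda_i^2 \tilde{z}_i^\top A_{-i}^{-2}\tilde{z}_i}{(1+\lambda_i\tilde{z}_i^\top A_{-i}^{-1}\tilde{z}_i)^2}\ \ge\ \frac{l^*}{4cn}+\frac{n\sum_{i>l^*}\lambda_i^2}{16c(\lambda_{k+1}r_k(\Sigma))^2}.
\end{align*}
Since $l^*\le k$ is admissible in $\min_{l\le k}$, choosing $l=l^*$ in the minimum and comparing with the target $(cb^2)^{-1}[l/n+b^2 n\sum_{i>l}\lambda_i^2/(\lambda_{k+1}r_k(\Sigma))^2]$ yields the required bound after absorbing universal factors into $c$: the first term matches because $1/(4cn)\ge 1/(16cb^2 n)$ (using $b>1$), and the second term matches exactly up to the constant $16$.

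The principal obstacle is the union bound at the outset: Lemma \ref{varBLLT20L08} has per-index failure probability $5e^{-n/c}$, whereas the sum in general ranges over all $i$ with $\lambda_i>0$, possibly infinitely many, so a naive union bound is vacuous. I would address this by truncating at an index cutoff $i_{\max}$ that is polynomial in $n$: the union bound failure $\le 5 i_{\max} e^{-n/c}$ is absorbed by slightly enlarging $c$ to obtain the stated $10 e^{-n/c}$, while the terms dropped from the tail are nonnegative, so discarding them only weakens the lower bound (and the tail $\sum_{i>i_{\max}}\lambda_i^2$ can be made small relative to $\sum_{i>l^*}\lambda_i^2$ by choosing $i_{\max}$ appropriately, since $\Sigma$ is trace class). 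The remaining bookkeeping --- matching the $b^2$ normalization and collecting constants into a single $c=c(\epsilon)$ --- is routine.
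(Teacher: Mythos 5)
You identify the central obstacle correctly---naively union-bounding the per-index bound of Lemma~\ref{varBLLT20L08} over the infinite index set $\{i:\lambda_i>0\}$ is vacuous---but your proposed fix does not close the gap. Truncating at a cutoff $i_{\max}$ that is polynomial in $n$ works for the union bound's failure probability, but it destroys the lower bound itself: you need the truncated sum $\sum_{l^*<i\le i_{\max}}\lambda_i^2$ to be a constant fraction of the full tail $\sum_{i>l^*}\lambda_i^2$, and trace-class alone does not give this for a polynomial $i_{\max}$. The eigenvalue decay of $\Sigma$ could be arbitrarily slow (e.g.\ $\lambda_i\sim i^{-1}\log^{-2}i$, which is both trace class and appears in Example~\ref{ex:benign_covariance}), so for any fixed $q$ the ratio $\sum_{i>n^q}\lambda_i^2\big/\sum_{i>l^*}\lambda_i^2$ need not be bounded away from~$1$. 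Conversely, choosing $i_{\max}$ large enough to make the tail negligible forces $i_{\max}$ to depend on $\Sigma$ in a way that breaks the uniform-in-$n$ probability budget. So the two requirements on $i_{\max}$ are in genuine tension and neither can be relaxed.

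The paper avoids the union bound entirely by invoking Lemma~9 of \citet{bartlett2020benign}, which is a Markov-type aggregation device for infinite sums of nonnegative random variables. The idea: writing $\xi_i$ for the $i$-th nonnegative summand and $a_i$ for the deterministic per-index lower bound from Lemma~\ref{varBLLT20L08} (which holds for each fixed $i$ with probability at least $1-5e^{-n/c}$), one has
\begin{align*}
\Ep\Bigl[\sum_i a_i\,\mone\{\xi_i<a_i\}\Bigr]=\sum_i a_i\Pr(\xi_i<a_i)\le 5e^{-n/c}\sum_i a_i,
\end{align*}
so by Markov's inequality the ``bad mass'' $\sum_i a_i\mone\{\xi_i<a_i\}$ is at most $\tfrac12\sum_i a_i$ with probability at least $1-10e^{-n/c}$, and on that event $\sum_i\xi_i\ge\sum_i a_i\mone\{\xi_i\ge a_i\}\ge\tfrac12\sum_i a_i$. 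This reproduces the stated $10e^{-n/c}$ exactly and requires no truncation and no cardinality control on the index set. Once you have $\sum_i\xi_i\gtrsim\sum_i a_i$, the rest of your argument is sound: your bound $S\le(b+1)n\lambda_{k+1}$ in Case~1, the bound $S\le 2\lambda_{k+1}r_k(\Sigma)$ together with $(1+x)^{-2}\ge\min\{1/4,\,1/(4x^2)\}$ in Case~2, the adaptive threshold $l^*:=|\{i:\lambda_i\ge S/n\}|$ with the verification $l^*\le k$, and the constant bookkeeping to match the $cb^2$ normalization all parallel the paper's algebraic manipulation (the paper rewrites $(1+\tfrac{S}{n\lambda_i})^{-2}$ as a triple $\min$ and splits into the cases directly, which is equivalent to your two-branch inequality). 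I would replace the truncation paragraph with the Markov aggregation and keep the remainder as written.
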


\begin{proof}[Proof of Lemma \ref{lem:TV_lower}]
By Lemma \ref{varBLLT20L03He}, we have
\begin{align*}
    \trace\left(T_{V}\right)\ge \mu_{n}\left(\Tilde{\Upsilon}_{n}\right)\sum_{i}\frac{\lambda_{i}^{2}\Tilde{z}_{i}^{\top}A_{-i}^{-2}z_{i}}{\left(1+\lambda_{i}\Tilde{z}_{i}^{\top}A_{-i}^{-1}\Tilde{z}_{i}\right)^{2}}
\end{align*}
and then Lemma 9 of \citet{bartlett2020benign} and Lemma \ref{varBLLT20L08} yield that there exist constants  $c_{1}=c_{1}\left(\epsilon\right)>0$ and $c_{2}=c_{2}\left(\epsilon\right)>0$ such that with probability at least $1-10e^{-n/c_{1}}$, we have
\begin{align*}
    \sum_{i}\frac{\lambda_{i}^{2}\Tilde{z}_{i}^{\top}A_{-i}^{-2}\Tilde{z}_{i}}{\left(1+\lambda_{i}\Tilde{z}_{i}^{\top}A_{-i}^{-1}\Tilde{z}_{i}\right)^{2}}&\ge \frac{1}{c_{1}n}\sum_{i}\left(1+\frac{\sum_{j>k}\lambda_{j}+n\lambda_{k+1}}{\lambda_{i}n}\right)^{-2}\\
    &= \frac{1}{c_{1}n}\sum_{i}\left(1^{-1}+\left(\frac{\lambda_{i}n}{\sum_{j>k}\lambda_{j}}\right)^{-1}+\left(\frac{\lambda_{i}}{\lambda_{k+1}}\right)^{-1}\right)^{-2}\\
    &\ge \frac{1}{c_{2}n}\sum_{i}\left(\max\left\{1^{-1},\left(\frac{\lambda_{i}n}{\sum_{j>k}\lambda_{j}}\right)^{-1},\left(\frac{\lambda_{i}}{\lambda_{k+1}}\right)^{-1}\right\}\right)^{-2}\\
    &= \frac{1}{c_{2}n}\sum_{i}\min\left\{1,\frac{\lambda_{i}^{2}n^{2}}{\left(\sum_{j>k}\lambda_{j}\right)^{2}},\frac{\lambda_{i}^{2}}{\lambda_{k+1}^{2}}\right\}\\
    &\ge \frac{1}{c_{2}b^{2}n}\sum_{i}\min\left\{1,\left(\frac{bn^{2}}{r_{k}\left(\Sigma \right)}\right)^{2}\frac{\lambda_{i}^{2}}{\lambda_{k+1}^{2}},\frac{\lambda_{i}^{2}}{\lambda_{k+1}^{2}}\right\}.
\end{align*}
If $r_{k}\left(\Sigma \right)<bn$ holds, then we have
\begin{align*}
    \trace\left(T_{V}\right)\ge \frac{\mu_{n}\left(\Tilde{\Upsilon}_{n}\right)}{c_{2}b^{2}n}\sum_{i}\min\left\{1,\frac{\lambda_{i}^{2}}{\lambda_{k+1}^{2}}\right\}=\frac{(k+1)\mu_{n}\left(\Tilde{\Upsilon}_{n}\right)}{c_{2}b^{2}n}
\end{align*}
since $\lambda_{i}\ge \lambda_{k+1}$ for $i\le k+1$.
If $r_{k}\left(\Sigma \right)\ge bn$, then we have
\begin{align*}
    \trace\left(T_{V}\right)\ge \frac{\mu_{n}\left(\Tilde{\Upsilon}_{n}\right)}{c_{2}b^{2}}\sum_{i}\min\left\{\frac{1}{n},\frac{b^{2}n\lambda_{i}^{2}}{\left(\lambda_{k+1}r_{k}\left(\Sigma \right)\right)^{2}}\right\}=\frac{\mu_{n}\left(\Tilde{\Upsilon}{n}\right)}{c_{2}b^{2}}\min_{l\le k} \left(\frac{l}{n}+\frac{b^{2}n\sum_{i>l}\lambda_{i}^{2}}{\left(\lambda_{k+1}r_{k}\left(\Sigma \right)\right)^{2}}\right)
\end{align*}
since $\left\{\lambda_{i}\right\}$ is non-increasing and then the minimizer $l$ gets restricted in $1\le l\le k$.
\end{proof}

\begin{lemma}[Lemma 11 of \citealp{bartlett2020benign}]\label{varBLLT20L11}
For any $b\ge 1$ and $k^{\ast}:=\min\left\{k:r_{k}\left(\Sigma \right)\ge bn\right\}$, if $k^{\ast}<\infty$, then we have
\begin{align*}
    \min_{l\le k^{\ast}} \left(\frac{l}{n}+\frac{b^{2}n\sum_{i>l}\lambda_{i}^{2}}{\left(\lambda_{k+1}r_{k}\left(\Sigma \right)\right)^{2}}\right)= \left(\frac{k^{\ast}}{n}+\frac{b^{2}n\sum_{i>k^{\ast}}\lambda_{i}^{2}}{\left(\lambda_{k+1}r_{k}\left(\Sigma \right)\right)^{2}}\right)=\frac{k^{\ast}}{n}+\frac{b^{2}n}{R_{k^{\ast}}\left(\Sigma \right)}.
\end{align*}
\end{lemma}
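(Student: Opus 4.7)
The plan is in two steps: (a) reduce the minimization over $l \leq k^*$ to the right endpoint $l = k^*$ by a discrete monotonicity argument, and (b) rewrite the value at $l=k^*$ in terms of the second-type effective rank $R_{k^*}(\Sigma)$. Throughout I will repeatedly exploit the identity $\lambda_{k^*+1} r_{k^*}(\Sigma) = \sum_{i>k^*}\lambda_i$, which is just the definition of $r_{k^*}(\Sigma)$.

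For step (a), let $f(l)$ denote the minimand. The discrete forward difference is
\[
f(l+1) - f(l) \;=\; \frac{1}{n} \;-\; \frac{b^{2} n\,\lambda_{l+1}^{2}}{\bigl(\lambda_{k^{*}+1}\,r_{k^{*}}(\Sigma)\bigr)^{2}},
\]
and its sign is determined by comparing $b n\,\lambda_{l+1}$ with $\lambda_{k^*+1} r_{k^*}(\Sigma) = \sum_{i>k^*}\lambda_i$. For any $l<k^{*}$, the minimality of $k^{*}$ forces $r_{l}(\Sigma) < bn$, i.e.\ $\sum_{i>l}\lambda_{i} < bn\,\lambda_{l+1}$; combining this with the trivial monotonicity $\sum_{i>k^{*}}\lambda_{i}\leq \sum_{i>l}\lambda_{i}$ gives $\lambda_{k^{*}+1}\,r_{k^{*}}(\Sigma) < bn\,\lambda_{l+1}$. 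Squaring shows $f(l+1)-f(l) < 0$, so $f$ is strictly decreasing on $\{0,1,\ldots,k^{*}\}$ and its minimum over this range is attained at $l=k^{*}$, which proves the first equality.

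For step (b), substituting $l=k^{*}$ and using $\lambda_{k^{*}+1}\,r_{k^{*}}(\Sigma) = \sum_{i>k^{*}}\lambda_{i}$ together with the definition $R_{k^{*}}(\Sigma)=(\sum_{i>k^{*}}\lambda_{i})^{2}/\sum_{i>k^{*}}\lambda_{i}^{2}$ yields
\[
f(k^{*}) \;=\; \frac{k^{*}}{n} \;+\; \frac{b^{2} n\,\sum_{i>k^{*}}\lambda_{i}^{2}}{\bigl(\sum_{i>k^{*}}\lambda_{i}\bigr)^{2}} \;=\; \frac{k^{*}}{n} \;+\; \frac{b^{2}\,n}{R_{k^{*}}(\Sigma)}.
\]
This is the substantive content; the further rewriting as $k^{*}/(bn) + bn/R_{k^{*}}(\Sigma)$ holds up to a multiplicative factor depending only on $b$ (the two expressions are sandwiched by $1/b$ and $b$ termwise), which is how the conclusion is used in the proof of Lemma~\ref{lem:TV_lower}, since the overall constant $c$ there already depends on $b$.

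The only real obstacle is verifying that the step (a) monotonicity is \emph{strict}, and this is exactly where the minimality clause in the definition of $k^{*}$ is needed: if one replaced $k^{*}$ by some $k > k^{*}$ with $r_{k}(\Sigma)\geq bn$, the same inequality $\sum_{i>l}\lambda_{i} < bn\,\lambda_{l+1}$ need not hold for $l$ close to $k$, and the minimum could migrate away from the endpoint. The apparent discrepancy between the $b^{2}$ in the numerator and the $b$ in the target expression is a purely cosmetic rescaling, absorbed in the enclosing constant, and does not affect the argument.
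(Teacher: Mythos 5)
Your proof is correct. Since the paper cites this lemma from \citet{bartlett2020benign} and does not reproduce its proof, there is no in-paper argument to compare against; your forward-difference argument for the minimization together with direct substitution at $l=k^{\ast}$ is the natural proof. One remark on emphasis: you call strictness of the monotonicity the ``only real obstacle,'' but weak monotonicity of $f$ on $\{0,\dots,k^{\ast}\}$ would already pin the minimum to the right endpoint; strictness is free once the minimality of $k^{\ast}$ is invoked, and your derivation of it is correct, so this is a matter of stress rather than a flaw. You also rightly identify that the final printed ``$=$'' is not a literal identity: evaluating at $l=k^{\ast}$ and using $\lambda_{k^{\ast}+1}r_{k^{\ast}}(\Sigma)=\sum_{i>k^{\ast}}\lambda_{i}$ gives $f(k^{\ast}) = k^{\ast}/n + b^{2}n/R_{k^{\ast}}(\Sigma) = b\bigl(k^{\ast}/(bn) + bn/R_{k^{\ast}}(\Sigma)\bigr)$, so the two sides differ by exactly a common factor of $b$ in both terms (a touch tighter than the termwise ``sandwiched by $1/b$ and $b$'' bound you state); for $b\geq 1$ the relation that actually holds is $\geq$, not $=$. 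As you observe, this factor is absorbed into the constant $c$ when the lemma is applied after Lemma~\ref{lem:TV_lower}, so the downstream conclusions are unaffected.
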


\section{Upper and lower bounds on the bias term $T_{B}$}

\subsection{Upper bound on $T_{B}$}

We first introduce a decomposition of $T_{B}$.
\begin{lemma}\label{varTB23L28}
    Fix $k<n$ and assume that $A_{k}$ is positive definite.
    Then there exists an absolute constant $c>0$ such that we have
    \begin{align*}
        (\beta^{\ast})^{\top}T_{B}\beta^{\ast}/c&\le \left\|\beta_{k:\infty}^{\ast}\right\|_{\Sigma_{k:\infty}}^{2}\\
        &\quad+\frac{\mu_{1}\left(A_{k}^{-1}\right)^{2}}{\mu_{n}\left(A_{k}^{-1}\right)^{2}}\frac{\mu_{1}\left(\Sigma_{0:k}^{-1/2}X_{0:k}^{\top}\Xi_{0,n}^{-1}X_{0:k}\Sigma_{0:k}^{1/2}\right)}{\mu_{k}\left(\Sigma_{0:k}^{-1/2}X_{0:k}^{\top}\Xi_{0,n}^{-1}X_{0:k}\Sigma_{0:k}^{1/2}\right)^{2}}\left\|\Xi_{0,n}^{-1/2}X_{k:\infty}\beta_{k:\infty}^{\ast}\right\|^{2}\\
        &\quad+\frac{\left\|\beta_{0:k}^{\ast}\right\|_{\Sigma_{0:k}^{-1}}}{\mu_{n}\left(A_{k}^{-1}\right)^{2}\mu_{k}\left(\Sigma_{0:k}^{-1/2}X_{0:k}^{\top}\Xi_{0,n}^{-1}X_{0:k}\Sigma_{0:k}^{1/2}\right)}\\
        &\quad+\lambda_{k+1}\mu_{1}\left(A^{-1}\right)\left\|\Xi_{0,n}^{-1/2}X_{k:\infty}\beta_{k:\infty}^{\ast}\right\|^{2}\\
        &\quad+\lambda_{k+1}\frac{\mu_{1}\left(A_{k}^{-1}\right)}{\mu_{n}\left(A_{k}^{-1}\right)^{2}}\frac{\mu_{1}\left(\Sigma_{0:k}^{-1/2}X_{0:k}^{\top}\Xi_{0,n}^{-1}X_{0:k}\Sigma_{0:k}^{-1/2}\right)}{\mu_{k}\left(\Sigma_{0:k}^{-1/2}X_{0:k}^{\top}\Xi_{0,n}^{-1}X_{0:k}\Sigma_{0:k}^{-1/2}\right)^{2}}\left\|\Sigma_{0:k}^{-1/2}\beta_{0:k}^{\ast}\right\|^{2}.
    \end{align*}
\end{lemma}

We give formal proof of this lemma for the sake of completeness though we know that it holds immediately.
Lemma 28 of \citet{tsigler2023benign} not using i.i.d.~properties, regarding $(\Xi_{0,n}^{-1/2}X,\Xi_{0,n}^{-1/2}y)$ as the input and output, and noticing that $\hat{\beta}$ is also the solution of
\begin{equation}
    \min_{\beta\in\R^{p}}\|\beta\|^{2}\text{ s.t. }\Xi_{0,n}^{-1/2}y=\Xi_{0,n}^{-1/2}X\beta.
\end{equation}

\begin{proof}
    We first give the following result, which is essentially identical to Eq.~(16) of \citet{tsigler2023benign}: for all $y\in\R^{n}$, we have
    \begin{equation}\label{eq:TB23E16}
        \hat{\beta}(y)_{0:k}+\left(\Xi_{0,n}^{-1/2}X_{0:k}\right)^{\top}A_{k}^{-1}\left(\Xi_{0,n}^{-1/2}X_{0:k}\right)\hat{\beta}(y)_{0:k}=\left(\Xi_{0,n}^{-1/2}X_{0:k}\right)^{\top}A_{k}^{-1}\Xi_{0,n}^{-1/2}y,
    \end{equation}
    where $\hat{\beta}(y)=(\Xi_{0,n}^{-1/2}X)^{\top}((\Xi_{0,n}^{-1/2}X)(\Xi_{0,n}^{-1/2}X)^{\top})^{-1}\Xi_{0,n}^{-1/2}y$ for arbitrary $y\in\R^{n}$.
    Eq.~(16) of \citet{tsigler2023benign} shows that
    \begin{equation*}
        \hat{\beta}(y)_{0:k}+X_{0:k}^{\top}\left(\Xi_{0,n}^{1/2}A_{k}\Xi_{0,n}^{1/2}\right)^{-1}X_{0:k}\hat{\beta}(y)_{0:k}=X_{0:k}^{\top}\left(\Xi_{0,n}^{1/2}A_{k}\Xi_{0,n}^{1/2}\right)^{-1}y.
    \end{equation*}
    We immediately notice that
    \begin{align*}
        X_{0:k}^{\top}\left(\Xi_{0,n}^{1/2}A_{k}\Xi_{0,n}^{1/2}\right)^{-1}X_{0:k}&=\left(\Xi_{0,n}^{-1/2}X_{0:k}\right)^{\top}A_{k}^{-1}\left(\Xi_{0,n}^{-1/2}X_{0:k}\right),\\
        X_{0:k}^{\top}\left(\Xi_{0,n}^{1/2}A_{k}\Xi_{0,n}^{1/2}\right)^{-1}y&=\left(\Xi_{0,n}^{-1/2}X_{0:k}\right)^{\top}A_{k}^{-1}\Xi_{0,n}^{-1/2}y,
    \end{align*}
    and 
    \begin{equation*}
        \hat{\beta}(y)=\left(\Xi_{0,n}^{-1/2}X\right)^{\top}\left(\left(\Xi_{0,n}^{-1/2}X\right)\left(\Xi_{0,n}^{-1/2}X\right)^{\top}\right)^{-1}\Xi_{0,n}^{-1/2}y=X^{\top}(XX^{\top})^{-1}y.
    \end{equation*}
    Hence we obtain the desired equality \eqref{eq:TB23E16}.

    In the second place, we bound $\|\beta_{0:k}^{\ast}-\hat{\beta}(y)_{0:k}\|_{\Sigma_{0:k}}^{2}$.
    The equality \eqref{eq:TB23E16} yields that
    \begin{equation*}
        \hat{\beta}(X\beta^{\ast})_{0:k}+\left(\Xi_{0,n}^{-1/2}X_{0:k}\right)^{\top}A_{k}^{-1}\left(\Xi_{0,n}^{-1/2}X_{0:k}\right)\hat{\beta}(X\beta^{\ast})_{0:k}=\left(\Xi_{0,n}^{-1/2}X_{0:k}\right)^{\top}A_{k}^{-1}\Xi_{0,n}^{-1/2}X\beta^{\ast}.
    \end{equation*}
    For the error vector $\zeta:=\hat{\beta}(X\beta^{\ast})-\beta^{\ast}$, we obtain
    \begin{align*}
        &\zeta_{0:k}+\beta_{0:k}^{\ast}+\left(\Xi_{0,n}^{-1/2}X_{0:k}\right)^{\top}A_{k}^{-1}\left(\Xi_{0,n}^{-1/2}X_{0:k}\right)\left(\zeta_{0:k}+\beta_{0:k}^{\ast}\right)\\
        &\quad=\left(\Xi_{0,n}^{-1/2}X_{0:k}\right)^{\top}A_{k}^{-1}\Xi_{0,n}^{-1/2}\left(X_{0:k}\beta_{0:k}^{\ast}+X_{k:\infty}\beta_{k:\infty}^{\ast}\right)
    \end{align*}
    and thus we have
    \begin{equation*}
        \zeta_{0:k}+\beta_{0:k}^{\ast}+\left(\Xi_{0,n}^{-1/2}X_{0:k}\right)^{\top}A_{k}^{-1}\left(\Xi_{0,n}^{-1/2}X_{0:k}\right)\zeta_{0:k}=\left(\Xi_{0,n}^{-1/2}X_{0:k}\right)^{\top}A_{k}^{-1}\Xi_{0,n}^{-1/2}X_{k:\infty}\beta_{k:\infty}^{\ast}.
    \end{equation*}
    Taking the inner product of both sides with $\zeta_{0:k}$ and using $\|\zeta_{0:k}\|\ge 0$, we have
    \begin{equation*}
        \zeta_{0:k}^{\top}\beta_{0:k}^{\ast}+\zeta_{0:k}^{\top}\left(\Xi_{0,n}^{-1/2}X_{0:k}\right)^{\top}A_{k}^{-1}\left(\Xi_{0,n}^{-1/2}X_{0:k}\right)\zeta_{0:k}\le \zeta_{0:k}^{\top}\left(\Xi_{0,n}^{-1/2}X_{0:k}\right)^{\top}A_{k}^{-1}\Xi_{0,n}^{-1/2}X_{k:\infty}\beta_{k:\infty}^{\ast}
    \end{equation*}
    and this gives
    \begin{align*}
        &\zeta_{0:k}^{\top}\Sigma_{0:k}^{1/2}\Sigma_{0:k}^{-1/2}\beta_{0:k}^{\ast}+\zeta_{0:k}^{\top}\Sigma_{0:k}^{1/2}\left(\Xi_{0,n}^{-1/2}X_{0:k}\Sigma_{0:k}^{-1/2}\right)^{\top}A_{k}^{-1}\left(\Xi_{0,n}^{-1/2}X_{0:k}\Sigma_{0:k}^{-1/2}\right)\Sigma_{0:k}^{1/2}\zeta_{0:k}\\
        &\quad\le \zeta_{0:k}^{\top}\Sigma_{0:k}^{1/2}\left(\Xi_{0,n}^{-1/2}X_{0:k}\Sigma_{0:k}^{-1/2}\right)^{\top}A_{k}^{-1}\Xi_{0,n}^{-1/2}X_{k:\infty}\beta_{k:\infty}^{\ast}.
    \end{align*}
    Therefore, it holds that
    \begin{align*}
        &\left\|\zeta_{0:k}\right\|_{\Sigma_{0:k}}^{2}\mu_{n}\left(A_{k}^{-1}\right)\mu_{k}\left(\Sigma_{0:k}^{-1/2}X_{0:k}\Xi_{0,n}^{-1}X_{0:k}\Sigma_{0:k}^{-1/2}\right)\\
        &\le \left\|\zeta_{0:k}\right\|_{\Sigma_{0:k}}\mu_{1}\left(A_{k}^{-1}\right)\sqrt{\mu_{1}\left(\Sigma_{0:k}^{-1/2}X_{0:k}\Xi_{0,n}^{-1}X_{0:k}\Sigma_{0:k}^{-1/2}\right)}\left\|\Xi_{0,n}^{-1/2}X_{k:\infty}\beta_{k:\infty}^{\ast}\right\|\\
        &\quad+\left\|\zeta_{0:k}\right\|_{\Sigma_{0:k}}\left\|\beta_{0:k}^{\ast}\right\|_{\Sigma_{0:k}^{-1}},
    \end{align*}
    and
    \begin{align*}
        \left\|\zeta_{0:k}\right\|_{\Sigma_{0:k}}
        &\le 
        \frac{\mu_{1}\left(A_{k}^{-1}\right)\sqrt{\mu_{1}\left(\Sigma_{0:k}^{-1/2}X_{0:k}\Xi_{0,n}^{-1}X_{0:k}\Sigma_{0:k}^{-1/2}\right)}\left\|\Xi_{0,n}^{-1/2}X_{k:\infty}\beta_{k:\infty}^{\ast}\right\|}{\mu_{n}\left(A_{k}^{-1}\right)\mu_{k}\left(\Sigma_{0:k}^{-1/2}X_{0:k}\Xi_{0,n}^{-1}X_{0:k}\Sigma_{0:k}^{-1/2}\right)}\\
        &\quad+\frac{\left\|\beta_{0:k}^{\ast}\right\|_{\Sigma_{0:k}^{-1}}}{\mu_{n}\left(A_{k}^{-1}\right)\mu_{k}\left(\Sigma_{0:k}^{-1/2}X_{0:k}\Xi_{0,n}^{-1}X_{0:k}\Sigma_{0:k}^{-1/2}\right)}.
    \end{align*}

    Finally, we consider bounds for the remaining $p-k$ components.
    We again notice that
    \begin{equation*}
        \left\|\beta_{k:\infty}^{\ast}-X_{k:\infty}^{\top}(XX^{\top})^{-1}X\beta^{\ast}\right\|_{\Sigma_{k:\infty}}^{2}= \left\|\beta_{k:\infty}^{\ast}-\left(\Xi_{0,n}^{-1/2}X_{k:\infty}\right)^{\top}A^{-1}\Xi_{0,n}^{-1/2}X\beta^{\ast}\right\|_{\Sigma_{k:\infty}}^{2}
    \end{equation*}
    and thus we have
    \begin{align*}
        &\|\beta_{k:\infty}^{\ast}-(\Xi_{0,n}^{-1/2}X_{k:\infty})^{\top}A^{-1}\Xi_{0,n}^{-1/2}X\beta^{\ast}\|_{\Sigma_{k:\infty}}^{2}\\
        &\le 3\left\|\beta_{k:\infty}^{\ast}\right\|_{\Sigma_{k:\infty}}^{2}+3\left\|\left(\Xi_{0,n}^{-1/2}X_{k:\infty}\right)^{\top}A^{-1}\left(\Xi_{0,n}^{-1/2}X_{k:\infty}\right)\beta_{k:\infty}^{\ast}\right\|_{\Sigma_{k:\infty}}^{2}\\
        &\quad+3\left\|\left(\Xi_{0,n}^{-1/2}X_{k:\infty}\right)^{\top}A^{-1}\left(\Xi_{0,n}^{-1/2}X_{0:k}\right)\beta_{0:k}^{\ast}\right\|_{\Sigma_{k:\infty}}^{2}.
    \end{align*}
    For the second term, we have
    \begin{align*}
        \left\|\left(\Xi_{0,n}^{-1/2}X_{k:\infty}\right)^{\top}A^{-1}\left(\Xi_{0,n}^{-1/2}X_{k:\infty}\right)\beta_{k:\infty}^{\ast}\right\|_{\Sigma_{k:\infty}}^{2}&=\left\|\Sigma_{k:\infty}^{1/2}X_{k:\infty}^{\top}\Xi_{0,n}^{-1/2}A^{-1}\left(\Xi_{0,n}^{-1/2}X_{k:\infty}\right)\beta_{k:\infty}^{\ast}\right\|^{2}\\
        &\le \lambda_{k+1}\left\|X_{k:\infty}^{\top}\Xi_{0,n}^{-1/2}A^{-1}\left(\Xi_{0,n}^{-1/2}X_{k:\infty}\right)\beta_{k:\infty}^{\ast}\right\|^{2}\\
        &= \lambda_{k+1}\left\|A_{k}^{1/2}A^{-1}\left(\Xi_{0,n}^{-1/2}X_{k:\infty}\right)\beta_{k:\infty}^{\ast}\right\|^{2}\\
        &\le \lambda_{k+1}\left\|A^{-1/2}\left(\Xi_{0,n}^{-1/2}X_{k:\infty}\right)\beta_{k:\infty}^{\ast}\right\|^{2}\\
        &\le \lambda_{k+1}\mu\left(A^{-1}\right)\left\|\Xi_{0,n}^{-1/2}X_{k:\infty}\beta_{k:\infty}^{\ast}\right\|^{2}.
    \end{align*}
    For the third term, \citet{tsigler2023benign} gives
    \begin{equation*}
        \left(\Xi_{0,n}^{1/2}A\Xi_{0,n}^{1/2}\right)^{-1}X_{0:k}=\left(\Xi_{0,n}^{1/2}A_{k}\Xi_{0,n}^{1/2}\right)^{-1}X_{0:k}\left(I_{k}+X_{0:k}^{\top}\left(\Xi_{0,n}^{1/2}A_{k}\Xi_{0,n}^{1/2}\right)^{-1}X_{0:k}\right)^{-1}
    \end{equation*}
    and thus we have
    \begin{equation*}
        A^{-1}\left(\Xi_{0,n}^{-1/2}X_{0:k}\right)=A_{k}\left(\Xi_{0,n}^{-1/2}X_{0:k}\right)\left(I_{k}+\left(\Xi_{0,n}^{-1/2}X_{0:k}\right)^{\top}A_{k}^{-1}\left(\Xi_{0,n}^{-1/2}X_{0:k}\right)\right)^{-1}.
    \end{equation*}
    Therefore, we have
    \begin{align*}
        &\left\|\left(\Xi_{0,n}^{-1/2}X_{k:\infty}\right)^{\top}A^{-1}\left(\Xi_{0,n}^{-1/2}X_{0:k}\right)\beta_{0:k}^{\ast}\right\|_{\Sigma_{k:\infty}}^{2}\\
        &=\left\|\left(\Xi_{0,n}^{-1/2}X_{k:\infty}\right)^{\top}A_{k}^{-1}\left(\Xi_{0,n}^{-1/2}X_{0:k}\right)\left(I_{k}+\left(\Xi_{0,n}^{-1/2}X_{0:k}\right)^{\top}A_{k}^{-1}\left(\Xi_{0,n}^{-1/2}X_{0:k}\right)\right)^{-1}\beta_{0:k}^{\ast}\right\|_{\Sigma_{k:\infty}}^{2}\\
        &=\left\|\left(\Xi_{0,n}^{-1/2}X_{k:\infty}\Sigma_{k:\infty}^{1/2}\right)^{\top}A_{k}^{-1}\left(\Xi_{0,n}^{-1/2}X_{0:k}\Sigma_{0:k}^{-1/2}\right)\right.\\
        &\qquad\left.\times\left(\Sigma_{0:k}^{-1}+\left(\Xi_{0,n}^{-1/2}X_{0:k}\Sigma_{0:k}^{-1/2}\right)^{\top}A_{k}^{-1}\left(\Xi_{0,n}^{-1/2}X_{0:k}\Sigma_{0:k}^{-1/2}\right)\right)^{-1}\Sigma_{0:k}^{-1/2}\beta_{0:k}^{\ast}\right\|^{2}\\
        &\le \frac{\left\|A_{k}^{-1/2}\left(\Xi_{0,n}^{-1/2}X_{k:\infty}\right)\Sigma_{k:\infty}\left(\Xi_{0,n}^{-1/2}X_{k:\infty}\right)^{\top}A_{k}^{-1/2}\right\|\mu_{1}\left(A_{k}^{-1/2}\right)^{2}\mu_{1}\left(\Sigma_{0:k}^{-1/2}X_{0:k}^{\top}\Xi_{0,n}^{-1}X_{0:k}\Sigma_{0:k}^{-1/2}\right)}{\mu_{k}\left(\Sigma_{0:k}^{-1}+\left(\Xi_{0,n}^{-1/2}X_{0:k}\Sigma_{0:k}^{-1/2}\right)^{\top}A_{k}^{-1}\left(\Xi_{0,n}^{-1/2}X_{0:k}\Sigma_{0:k}^{-1/2}\right)\right)^{2}}\\
        &\qquad\times \left\|\Sigma_{0:k}^{-1/2}\beta_{0:k}^{\ast}\right\|^{2}\\
        &\le \frac{\lambda_{1}\left\|A_{k}^{-1/2}\left(\Xi_{0,n}^{-1/2}X_{k:\infty}\right)\left(\Xi_{0,n}^{-1/2}X_{k:\infty}\right)^{\top}A_{k}^{-1/2}\right\|\mu_{1}\left(A_{k}^{-1}\right)\mu_{1}\left(\Sigma_{0:k}^{-1/2}X_{0:k}^{\top}\Xi_{0,n}^{-1}X_{0:k}\Sigma_{0:k}^{-1/2}\right)}{\mu_{n}\left(A_{k}^{-1}\right)^{2}\mu_{k}\left(\Sigma_{0:k}^{-1/2}X_{0:k}^{\top}\Xi_{0,n}^{-1}X_{0:k}\Sigma_{0:k}^{-1/2}\right)^{2}}\\
        &\qquad\times \left\|\Sigma_{0:k}^{-1/2}\beta_{0:k}^{\ast}\right\|^{2}\\
        &\le \frac{\lambda_{1}\mu_{1}\left(A_{k}^{-1}\right)\mu_{1}\left(\Sigma_{0:k}^{-1/2}X_{0:k}^{\top}\Xi_{0,n}^{-1}X_{0:k}\Sigma_{0:k}^{-1/2}\right)}{\mu_{n}\left(A_{k}^{-1}\right)^{2}\mu_{k}\left(\Sigma_{0:k}^{-1/2}X_{0:k}^{\top}\Xi_{0,n}^{-1}X_{0:k}\Sigma_{0:k}^{-1/2}\right)^{2}}\left\|\Sigma_{0:k}^{-1/2}\beta_{0:k}^{\ast}\right\|^{2}.
    \end{align*}
    Hence, we obtain the desired conclusion.
\end{proof}

We present an extension of Theorem 5 of \citet{tsigler2023benign}.
\begin{theorem}
    Suppose that Assumption 1 holds.
    There exists a constant $c$ dependent only on $\epsilon$ such that for any $k<n/c$, with probability at least $1-ce^{-n/c}$, if the matrix $A_{k}$ is positive definite, then we have
    \begin{align*}
        &(\beta^{\ast})^{\top}T_{B}\beta^{\ast}/c\\
        &\quad\le \left\|\beta_{k:\infty}^{\ast}\right\|_{\Sigma_{k:\infty}}^{2}\left(1+\frac{\mu_{1}(A_{k}^{-1})^{2}}{\mu_{n}(A_{k}^{-1})^{2}}+n\lambda_{k+1}\mu_{1}\left(A_{k}^{-1}\right)\right) +\left\|\beta_{0:k}^{\ast}\right\|_{\Sigma_{0:k}^{-1}}^{2}\left(\frac{1}{n^{2}\mu_{n}(A_{k}^{-1})^{2}}+\frac{\lambda_{k+1}}{n}\frac{\mu_{1}(A_{k}^{-1})}{\mu_{n}(A_{k}^{-1})^{2}}\right).
    \end{align*}
\end{theorem}

\begin{proof}
    The proof is an extension of that of Theorem 5 of \citet{tsigler2023benign}.
    By Lemma \ref{varTB23L28}, for some absolute constant $c>0$, we obtain
    \begin{align}
        (\beta^{\ast})^{\top}T_{B}\beta^{\ast}/c&\le \left\|\beta_{k:\infty}^{\ast}\right\|_{\Sigma_{k:\infty}}^{2}\\
        &\quad+\frac{\mu_{1}\left(A_{k}^{-1}\right)^{2}}{\mu_{n}\left(A_{k}^{-1}\right)^{2}}\frac{\mu_{1}\left(\Sigma_{0:k}^{-1/2}X_{0:k}^{\top}\Xi_{0,n}^{-1}X_{0:k}\Sigma_{0:k}^{1/2}\right)}{\mu_{k}\left(\Sigma_{0:k}^{-1/2}X_{0:k}^{\top}\Xi_{0,n}^{-1}X_{0:k}\Sigma_{0:k}^{1/2}\right)^{2}}\left\|\Xi_{0,n}^{-1/2}X_{k:\infty}\beta_{k:\infty}^{\ast}\right\|^{2}\\
        &\quad+\frac{\left\|\beta_{0:k}^{\ast}\right\|_{\Sigma_{0:k}^{-1}}}{\mu_{n}\left(A_{k}^{-1}\right)^{2}\mu_{k}\left(\Sigma_{0:k}^{-1/2}X_{0:k}^{\top}\Xi_{0,n}^{-1}X_{0:k}\Sigma_{0:k}^{1/2}\right)}\\
        &\quad+\lambda_{k+1}\mu_{1}\left(A^{-1}\right)\left\|\Xi_{0,n}^{-1/2}X_{k:\infty}\beta_{k:\infty}^{\ast}\right\|^{2}\\
        &\quad+\lambda_{k+1}\frac{\mu_{1}\left(A_{k}^{-1}\right)}{\mu_{n}\left(A_{k}^{-1}\right)^{2}}\frac{\mu_{1}\left(\Sigma_{0:k}^{-1/2}X_{0:k}^{\top}\Xi_{0,n}^{-1}X_{0:k}\Sigma_{0:k}^{-1/2}\right)}{\mu_{k}\left(\Sigma_{0:k}^{-1/2}X_{0:k}^{\top}\Xi_{0,n}^{-1}X_{0:k}\Sigma_{0:k}^{-1/2}\right)^{2}}\left\|\Sigma_{0:k}^{-1/2}\beta_{0:k}^{\ast}\right\|^{2}.
    \end{align}
    
    (Step 1) We first control $\mu_{1}(\Sigma_{0:k}^{-1/2}X_{0:k}^{\top}\Xi_{0,n}^{-1}X_{0:k}\Sigma_{0:k}^{1/2})$ and $\mu_{k}(\Sigma_{0:k}^{-1/2}X_{0:k}^{\top}\Xi_{0,n}^{-1}X_{0:k}\Sigma_{0:k}^{1/2})$.
    Let $J_{kn}=\diag\{\Xi_{0,n}^{-1/2}\Xi_{1,n}^{1/2},\cdots,\Xi_{0,n}^{-1/2}\Xi_{k,n}^{1/2}\}$.
    For $u\in\mathbb{S}^{k-1}$, by using the fact that $\vectorize(ABC)=(C^{\top}\otimes A)\vectorize(B)$ with the vectorization operator $\vectorize(A)=[A^{(1,1)}\ A^{(2,1)}\ \cdots\ A^{(d_{1},d_{2})}]^{\top}$ and the Kronecker product $\otimes$ for arbitrary matrices $A\in\R^{d_{1}}\otimes\R^{d_{2}},B\in\R^{d_{2}}\otimes\R^{d_{3}},C\in\R^{d_{3}}\otimes\R^{d_{4}}$, we obtain
    \begin{align*}
        \Xi_{0,n}^{-1/2}X_{0:k}\Sigma_{0:k}^{-1/2}u
        =I_{n}\left[\Xi_{0,n}^{-1/2}\Xi_{1,n}^{1/2}z_{1}\ \cdots\ \Xi_{0,n}^{-1/2}\Xi_{k,n}^{1/2}z_{k}\right]u=(u^{\top}\otimes I_{n})J_{kn}\vectorize(Z_{0:k}).
    \end{align*}
The proof of Theorem 6.3.2 of \citet{Vershynin2018High} yields that for some absolute constant $c>0$, for all $u\in\mathbb{S}^{k-1}$ and $t>0$, we have
\begin{align*}
    &P\left(\left|\left\|\Xi_{0,n}^{-1/2}X_{0:k}\Sigma_{0:k}^{-1/2}u\right\|-\left\|(u\otimes I_{n})J_{kn}\right\|_{F}\right|\ge t\right)\\
    &=P\left(\left|\left\|(u\otimes I_{n})J_{kn}\vectorize(Z_{0:k})\right\|-\left\|(u\otimes I_{n})J_{kn}\right\|_{F}\right|\ge t\right) \\
    &\le 2\exp\left(-\frac{t^{2}}{c\|(u\otimes I_{n})J_{kn}\|_{2}^{2}}\right).
\end{align*}
By the net argument \citep[Exercise 4.4.4 of][]{Vershynin2018High}, for some absolute constant $c>0$ (different than $c$ above), we have
\begin{equation*}
    P\left(\sup_{u\in\mathbb{S}^{k-1}}\left|\left\|\Xi_{0,n}^{-1/2}X_{0:k}\Sigma_{0:k}^{-1/2}u\right\|_{2}-\left\|(u\otimes I_{n})J_{kn}\right\|_{F}\right|\ge t\right)\le 2\exp\left(-\frac{t^{2}}{c\|(u\otimes I_{n})J_{kn}\|_{2}^{2}}+k\log 9\right)
\end{equation*}
and thus we have
\begin{equation*}
    P\left(\sup_{u\in\mathbb{S}^{k-1}}\left|\left\|\Xi_{0,n}^{-1/2}X_{0:k}\Sigma_{0:k}^{-1/2}u\right\|_{2}-\left\|(u\otimes I_{n})J_{kn}\right\|_{F}\right|\ge \sqrt{c\|(u\otimes I_{n})J_{kn}\|_{2}^{2}(t+k\log9)}\right)\le 2\exp\left(-t\right).
\end{equation*}
Also, it holds that
\begin{equation*}
    \left\|(u\otimes I_{n})J_{kn}\right\|_{F}=\trace\left(J_{kn}^{\top}(u^{\top}\otimes I_{n})^{\top}(u^{\top}\otimes I_{n})J_{kn}\right)^{1/2}\in\left[\sqrt{n\epsilon},\sqrt{n\epsilon^{-1}}\right],
\end{equation*}
since we have
\begin{align*}
    \trace\left((u^{\top}\otimes I_{n})^{\top}(u^{\top}\otimes I_{n})\right)
        =\trace\left(
        \left[\begin{matrix}
            u^{(1)}u^{(1)}I_{n} & \cdots & u^{(1)}u^{(k)}I_{n}\\
            \vdots & \ddots & \vdots \\
            u^{(k)}u^{(1)}I_{n} & \cdots & u^{(k)}u^{(k)}I_{n}
        \end{matrix}\right]\right)= n.
\end{align*}
Therefore, for all $t>0$ and $k\in\N$ with $n\epsilon>c\epsilon^{-1}(t+k\log9)$, with probability $1-2e^{-t}$, we obtain that
\begin{align*}
   \mu_{k}(\Sigma_{0:k}^{-1/2}X_{0:k}^{\top}\Xi_{0,n}^{-1}X_{0:k}\Sigma_{0:k}^{-1/2})&=\inf_{u\in\mathbb{S}^{k-1}}\left\|\Xi_{0,n}^{-1/2}X_{0:k}\Sigma_{0:k}^{-1/2}u\right\|_{2}^{2}\ge\left(\sqrt{n\epsilon}- \sqrt{c\epsilon^{-1}(t+k\log9)}\right)^{2},\\
   \mu_{1}(\Sigma_{0:k}^{-1/2}X_{0:k}^{\top}\Xi_{0,n}^{-1}X_{0:k}\Sigma_{0:k}^{-1/2})&=\sup_{u\in\mathbb{S}^{k-1}}\left\|\Xi_{0,n}^{-1/2}X_{0:k}\Sigma_{0:k}^{-1/2}u\right\|_{2}^{2}\le\left(\sqrt{n\epsilon^{-1}}+ \sqrt{c\epsilon^{-1}(t+k\log9)}\right)^{2}.
\end{align*}

(Step 2) We control $\Tilde{Z}_{k:\infty}$.
For some absolute constant $c>0$, for all $u\in\mathbb{S}^{n-1}$ and $v\in\R^{p-k}$, we have
\begin{equation*}
    P\left(\left|u^{\top}\Tilde{Z}_{k:\infty}v\right|\ge t\right)\le 2\exp\left(-\frac{t^{2}\epsilon}{c\|v\|^{2}}\right)
\end{equation*}
since it holds that
\begin{equation*}
    u^{\top}\Tilde{Z}_{k:\infty}v=\sum_{i=1}^{p-k}v^{(i)}u^{\top}\Xi_{0,n}^{-1/2}\Xi_{k+i,n}^{1/2}z_{k+1}\sim N\left(\mathbf{0},\sum_{i=1}^{p-k}(v^{(i)})^{2}u^{\top}\Xi_{0,n}^{-1/2}\Xi_{i+k,n}\Xi_{0,n}^{-1/2}u\right).
\end{equation*}
Therefore, we obtain that for some absolute constant $c>0$ (different than $c$ above)
\begin{equation*}
    P\left(\sup_{u\in\mathbb{S}^{n-1}}\left|u^{\top}\Tilde{Z}_{k:\infty}v\right|^{2}\ge t\right)\le 2\exp\left(-\frac{t\epsilon}{c\|v\|^{2}}+n\log9\right),
\end{equation*}
and as a result, we obtain
\begin{equation*}
    P\left(\sup_{u\in\mathbb{S}^{n-1}}\left|u^{\top}\Tilde{Z}_{k:\infty}v\right|^{2}\ge c\epsilon^{-1}\|v\|^{2}(t+n\log9)\right)\le 2\exp\left(-t\right).
\end{equation*}

(Step 3) The results above show that for some absolute constant $c>0$, for any $t>0$ and $k\in\N$ such that $t+k\log9<nc^{-1}\epsilon^{2}$, with probability $1-4e^{-t}$, we have
    \begin{align}
        (\beta^{\ast})^{\top}T_{B}\beta^{\ast}/c&\le \left\|\beta_{k:\infty}^{\ast}\right\|_{\Sigma_{k:\infty}}^{2}\\
        &\quad+\frac{\mu_{1}\left(A_{k}^{-1}\right)^{2}}{\mu_{n}\left(A_{k}^{-1}\right)^{2}}\frac{\left(\sqrt{n\epsilon^{-1}}+ \sqrt{c\epsilon^{-1}(t+k\log9)}\right)^{2}}{\left(\sqrt{n\epsilon}- \sqrt{c\epsilon^{-1}(t+k\log9)}\right)^{4}}c\epsilon^{-1}(t+n\log9)\left\|\beta_{k:\infty}^{\ast}\right\|_{\Sigma_{k:\infty}}^{2}\\
        &\quad+\frac{\left\|\beta_{0:k}^{\ast}\right\|_{\Sigma_{0:k}^{-1}}}{\mu_{n}\left(A_{k}^{-1}\right)^{2}\left(\sqrt{n\epsilon}- \sqrt{c\epsilon^{-1}(t+k\log9)}\right)^{2}}\\
        &\quad+\lambda_{k+1}\mu_{1}\left(A^{-1}\right)c\epsilon^{-1}(t+n\log9)\left\|\beta_{k:\infty}^{\ast}\right\|_{\Sigma_{k:\infty}}^{2}\\
        &\quad+\lambda_{k+1}\frac{\mu_{1}\left(A_{k}^{-1}\right)}{\mu_{n}\left(A_{k}^{-1}\right)^{2}}\frac{\left(\sqrt{n\epsilon^{-1}}+ \sqrt{c\epsilon^{-1}(t+k\log9)}\right)^{2}}{\left(\sqrt{n\epsilon}- \sqrt{c\epsilon^{-1}(t+k\log9)}\right)^{4}}\left\|\beta_{0:k}^{\ast}\right\|_{\Sigma_{0:k}^{\dagger}}^{2}.
    \end{align}
    By setting $c=c(\epsilon)$ sufficiently large, we obtain the conclusion.
\end{proof}

The theorem combined with Lemma 15 yields the following corollary.
\begin{corollary}\label{cor:TB_upper}
    Under Assumption 1, there are constants $b=b(\epsilon)\ge 1$ and $c=c(\epsilon)\ge 1$ such that for any $k\in\N$ such that $k<n/c$ and $r_{k}(\Sigma)\ge bn$, with probability at least $1-c\exp(-n/c)$, we have
    \begin{align*}
        (\beta^{\ast})^{\top}T_{B}\beta^{\ast}/c&\le \left\|\beta_{k:\infty}^{\ast}\right\|_{\Sigma_{k:\infty}}^{2}+\left\|\beta_{0:k}^{\ast}\right\|_{\Sigma_{0:k}^{-1}}^{2}\left(\frac{\lambda_{k+1}r_{k}(\Sigma)}{n}\right)^{2}.
    \end{align*}
\end{corollary}

\begin{proof}
    We consider the first term on the right-hand side.
    Lemma 15 yields that for some $b(\epsilon),c(\epsilon)\ge 1$, $\mu_{1}(A_{k}^{-1})^{2}/\mu_{n}(A_{k}^{-1})^{2}\le c(\epsilon)$, and it holds that
    \begin{equation*}
        \mu_{1}(A^{-1})\le \mu_{1}(A_{k}^{-1})=\mu_{n}(A_{k})^{-1}\le c(\epsilon)/(\lambda_{k}r_{k+1}(\Sigma))\le c(\epsilon)/(b(\epsilon)n),
    \end{equation*}
    where the first inequality holds with probability 1 by Lemma S.11 of \citet{bartlett2020benign}.

    For the second term, Lemma 15 again gives that $\mu_{n}(A_{k}^{-1})^{-1}=\mu_{1}(A_{k})\le c(\epsilon)\lambda_{k}r_{k+1}(\Sigma)$, and as the proof of Corollary 6 of \citet{tsigler2023benign}, we have
    \begin{equation*}
        \frac{\lambda_{k+1}}{n}\frac{\mu_{1}(A_{k}^{-1})}{\mu_{n}(A_{k}^{-1})^{2}}\le c(\epsilon)\frac{\lambda_{k+1}n}{n^{2}}\lambda_{k+1}r_{k}(\Sigma)\le c(\epsilon)\frac{\lambda_{k+1}^{2}b(\epsilon)^{-1}r_{k}(\Sigma)^{2}}{n^{2}}.
    \end{equation*}
    Hence, we obtain the conclusion.
\end{proof}

\subsection{Lower bound on $T_{B}$}

We now extend Lemma 8 of \citet{tsigler2023benign}; we replace some arguments on $\mu_{n}(A_{-i})$ by those of \citet{bartlett2020benign} for consistency.

\begin{lemma}\label{lem:TB_lower}
    For some $b=b(\epsilon)\ge 1$ and $c=c(\epsilon)\ge 1$ dependent only on $\epsilon$, for any $k\in\{1,\ldots,p\}$ with $r_{k}(\Sigma)\ge bn$, with probability at least $1-10e^{-n/c}$, we have
    \begin{equation*}
        \Ep_{\beta^{\ast}}[(\beta^{\ast})^{\top}T_{B}\beta^{\ast}]\ge \frac{1}{2}\sum_{i=1}^{p}\frac{\lambda_{i}(\Bar{\beta}^{(i)})^{2}}{\left(1+\frac{c(\epsilon)\lambda_{i}n}{\lambda_{k+1}r_{k}(\Sigma)}\right)^{2}}.
    \end{equation*}
\end{lemma}

\begin{proof}
    Under Assumption 2, we have
    \begin{align*}
        \Ep_{\beta^{\ast}}[(\beta^{\ast})^{\top}T_{B}\beta^{\ast}]&=\sum_{i=1}^{p}(U^{\top}T_{B}U)^{(i,i)}(\Bar{\beta}^{(i)})^{2}\\
        &=\sum_{i=1}^{p}\lambda_{i}(\Bar{\beta}^{(i)})^{2}\left(\left(1-\lambda_{i}\tilde{z}_{i}^{\top}A^{-1}\tilde{z}_{i}\right)^{2}+\sum_{j\neq i}\lambda_{j}^{2}(\tilde{z}_{j}^{\top}A^{-1}\tilde{z}_{j})^{2}\right)\\
        &\ge \sum_{i=1}^{p}\frac{\lambda_{i}(\Bar{\beta}^{(i)})^{2}}{(1+\lambda_{i}\tilde{z}_{i}^{\top}A_{-i}^{-1}\tilde{z}_{i})^{2}}.
    \end{align*}
    Lemma 12 yields that for some $c=c(\epsilon)\ge 1$, with probability at least $1-3e^{-n/c}$, $\|\tilde{z}\|^{2}\le c(\epsilon)n$.
    Lemma 15-(ii) and -(iii) give that for some $b=b(\epsilon)\ge1$ and $c=c(\epsilon)\ge1$, for any $k\in\{1,\ldots,p\}$ with $r_{k}(\Sigma)\ge bn$, with probability at least $1-2e^{-n/c}$, for any $i\in\{1,\ldots,k\}$, we have
    \begin{equation*}
        \mu_{n}(A_{-i})\ge \frac{1}{c}\lambda_{k+1}r_{k}(\Sigma).
    \end{equation*}
    Therefore, for each $i=1,\ldots,p$, for some $c=c(\epsilon)\ge 1$, with probability at least $1-5e^{-n/c}$, we have
    \begin{equation*}
        \frac{\lambda_{i}(\Bar{\beta}^{(i)})^{2}}{(1+\lambda_{i}\tilde{z}_{i}^{\top}A_{-i}^{-1}\tilde{z}_{i})^{2}}\ge \frac{\lambda_{i}(\Bar{\beta}^{(i)})^{2}}{(1+\lambda_{i}\mu_{1}(A_{-i}^{-1})\|\tilde{z}\|^{2})^{2}}=\frac{\lambda_{i}(\Bar{\beta}^{(i)})^{2}}{(1+\lambda_{i}\mu_{n}(A_{-i})^{-1}\|\tilde{z}\|^{2})^{2}}\ge \frac{\lambda_{i}(\Bar{\beta}^{(i)})^{2}}{\left(1+\frac{c\lambda_{i}n}{\lambda_{k+1}r_{k}(\Sigma)}\right)^{2}}.
    \end{equation*}
    Lemma 9 of \citet{bartlett2020benign} yields the desired conclusion.
\end{proof}

\section{Proof for convergence rate analysis}

\begin{proof}[Proof of Theorem 3]
    All the results on $\eta_{n}$ are immediate from Theorem S.14 of \citet{bartlett2020benign}. 
    Therefore, we examine only the rate of convergence of $\tau_{n}$.

    To give bounds on $\tau_{n}$ for (i)--(iii), the following fact is useful: since $k^{\ast}=k^{\ast}(b)=\min\{k\ge0;\sum_{i>k}\lambda_{i}\ge bn\lambda_{k+1}\}$ and $\sum_{i> k^{\ast}}\lambda_{i}\le \sum_{i\ge k^{\ast}}\lambda_{i}\le bn\lambda_{k^{\ast}}$ as long as $k^{\ast}\ge1$, we have
    \begin{align*}
        \tau_{n}&=\sum_{i=k^{\ast}+1}^{p_{n}}\lambda_{i}\left(u_{i}^{\top}\beta_{n}^{\ast}\right)^{2}+\sum_{i=1}^{k^{\ast}}\lambda_{i}^{-1}\left(u_{i}^{\top}\beta_{n}^{\ast}\right)^{2}\left(\frac{\sum_{i=k^{\ast}+1}^{p_{n}}\lambda_{i}}{n}\right)^{2}\\
        &\le \lambda_{k^{\ast}+1}\sum_{i=k^{\ast}+1}^{p_{n}}\left(u_{i}^{\top}\beta_{n}^{\ast}\right)^{2}+\sum_{i=1}^{k^{\ast}}\lambda_{i}^{-1}\left(u_{i}^{\top}\beta_{n}^{\ast}\right)^{2}\left(b\lambda_{k^{\ast}}\right)^{2}\\
        &\le \lambda_{k^{\ast}}\sum_{i=k^{\ast}+1}^{p_{n}}\left(u_{i}^{\top}\beta_{n}^{\ast}\right)^{2}+b^{2}\lambda_{k^{\ast}}\sum_{i=1}^{k^{\ast}}\frac{\lambda_{k^{\ast}}}{\lambda_{i}}\left(u_{i}^{\top}\beta_{n}^{\ast}\right)^{2}\\
        &\le b^{2}\lambda_{k^{\ast}}\left\|\beta_{n}^{\ast}\right\|^{2},
    \end{align*}
    where $u_{i}$ is the column vectors of $U$.
    The exact orders of $k^{\ast}$ for (i)--(iii) are given by \citet{bartlett2020benign} and we obtain the conclusions via them.

    As for $\tau_{n}$ in (iv) and (v), we just evaluate them directly.
    For (iv), since $k^{\ast}=s_{n}$ holds, we have 
    \begin{align*}
        \tau_{n}&=\epsilon_{n}\sum_{i=s_{n}+1}^{p_{n}}\left(u_{i}^{\top}\beta_{n}^{\ast}\right)^{2}+\sum_{i=1}^{s_{n}}\left(u_{i}^{\top}\beta_{n}^{\ast}\right)^{2}\left(\frac{\sum_{i=s_{n}+1}^{p_{n}}\epsilon_{n}}{n}\right)^{2}\le \left(\epsilon_{n}+\left(\frac{\epsilon_{n}p_{n}}{n}\right)^{2}\right)\left\|\beta_{n}^{\ast}\right\|^{2}.
    \end{align*}
    For (v), since $k^{\ast}=1$, we have
    \begin{align*}
        \tau_{n}&=\sum_{i=2}^{p_{n}}\lambda_{i,n}\left(u_{i}^{\top}\beta_{n}^{\ast}\right)^{2}+\left(u_{1}^{\top}\beta_{n}^{\ast}\right)^{2}\left(\frac{\sum_{i=2}^{p_{n}}\lambda_{i,n}}{n}\right)^{2}\le \left(\epsilon_{n}\frac{\left(1+\theta\right)^{2}}{\left(1-\theta\right)^{2}}+\left(\frac{\epsilon_{n}p_{n}}{n}\right)^{2}\frac{\left(1+\theta\right)^{4}}{\left(1-\theta\right)^{4}}\right)\left\|\beta_{n}^{\ast}\right\|^{2}.
    \end{align*}
    Hence, we obtain the desired result.
\end{proof}

\begin{proof}[Proof of Proposition 4]
In this proof, we study the terms in the bias part of the bound in Theorem 1.
By H\"older's inequality, we obtain
\begin{align*}
\left\|\beta^{\ast}\right\|_{\Sigma_{k^*:\infty}}^{2}+\left\|\beta^{\ast}\right\|_{\Sigma_{0:k^*}^{\dagger}}^{2}\left(\frac{\sum_{i>k^*}\lambda_{i}}{n}\right)^{2}&\le \left\|\beta^*\right\|^{2}\left(\lambda_{k^*+1}+\lambda_{k^*}^{-1}\left(\frac{\sum_{i>k^*}\lambda_{i}}{n}\right)^{2}\right)\\
&= \left\|\beta^*\right\|^{2}\left(\lambda_{k^*+1}+\lambda_{k^*}^{-1}\left(\frac{\sum_{i\ge k^*}\lambda_{i}-\lambda_{k^*}}{n}\right)^{2}\right)\\
&=\left\|\beta^*\right\|^{2}\left(\lambda_{k^*+1}+\lambda_{k^*}^{-1}\lambda_{k^*}^{2}\left(\frac{\sum_{i\ge k^*}\lambda_{i}-\lambda_{k^*}}{\lambda_{k^*}n}\right)^{2}\right)\\
&=\left\|\beta^*\right\|^{2}\left(\lambda_{k^*+1}+\lambda_{k^*}\left(\frac{r_{k^*-1}(\Sigma)-1}{n}\right)^{2}\right),
\end{align*}
which follows the definition of $r_k(\Sigma)$ in Definition 1.
With the loss of generality, we set $r_{k^*-1}(\Sigma) \geq 1$.
With the specified constant $b\ge 1$  with $k^{\ast}=k^{\ast}(b)\ge 1$, the relation $r_{k^* - 1}(\Sigma) \leq bn$ by Definition 1 yields
\begin{align*}
    \left\|\beta^*\right\|^{2}\left(\lambda_{k^{\ast}+1}+\lambda_{k^{\ast}}\left(\frac{r_{k^{\ast}-1}(\Sigma)-1}{n}\right)^{2}\right)
    &\le \left\|\beta^*\right\|^{2}\left(\lambda_{k^{\ast}+1}+\lambda_{k^{\ast}}\left(\frac{bn-1}{n}\right)^{2}\right)\\
    &\le \left \|\beta^*\right\|^{2}\left(\lambda_{k^{\ast}+1}+\lambda_{k^{\ast}}b^{2}\right) \\
    & \leq c \|\beta^*\|^{2} \lambda_{k^*} \\
    & \leq c \|\beta^*\|^{2} \frac{\sum_{i \geq k^*} \lambda_i}{bn} \\
    & \leq c \|\beta^*\|^{2} \frac{\trace (\Sigma)}{bn},
\end{align*}
where $c := (1 + b^2)$ is a constant.
The last inequality follows $\lambda_{k^* + 1} \leq \lambda_{k^*}$.

About the variance part of the upper bound in Theorem 1, the result is obvious.
\end{proof}

\section{Proofs for the examples}

We give proofs for lemmas and propositions for the example of processes presented in Section 6.

\begin{proof}[Proof for Proposition 7]
Let us begin with showing (i).
Because $\phi_{0,k}=1$ for all $k$ by the definition, the lower bound is obvious.
By the assumptions of the unit variance of white noises and spectral densities, Theorem 4.4.2 and Proposition 4.5.3 of \citet{BD1991Time} lead to
\begin{align*}
    \epsilon^{4}\le \inf_{z\in\C:\left|z\right|=1}\frac{\left|1+\sum_{j=1}^{\ell_{2}}\varphi_{j,k}z^{j}\right|^{2}}{\left|1-\sum_{j=1}^{\ell_{1}}\rho_{j,k}z^{j}\right|^{2}}\le \sum_{j=0}^{\infty}\phi_{j,k}^{2}\le \sup_{z\in\C:\left|z\right|=1}\frac{\left|1+\sum_{j=1}^{\ell_{2}}\varphi_{j,k}z^{j}\right|^{2}}{\left|1-\sum_{j=1}^{\ell_{1}}\rho_{j,k}z^{j}\right|^{2}}\le \epsilon^{-4}
\end{align*}
because $\sum_{j=0}^{\infty}\phi_{j,k}^{2}$ equals to all the diagonal elements of the corresponding autocovariance matrix.

(ii) follows from the same argument as (i) and the fact that $\sum_{j=0}^{\infty}\phi_{j,k}^{2}$ is the inverse variance of the corresponding noise terms in each coordinate process $\left\{\left(x_{t}^{\top}e_{k}\right)/\sqrt{\lambda_{k}}:t=1,\ldots,n\right\}$.
\end{proof}

\section*{Acknowledgements}
The authors would like to thank the anonymous referees, an Associate Editor, and the Editor for their constructive comments that improved the quality of this paper.
We thank Dr. Pierre Alquier for the fruitful discussion.

\section*{Funding}
The first author was supported by JSPS KAKENHI (21K20318).
The second author was supported by JSPS KAKENHI (21K11780), JST CREST (JPMJCR21D2), and JST FOREST (JPMJFR216I).

\bibliographystyle{apecon}
\bibliography{main}

\end{document}